\newcommand{\BlackBox}{\rule{1.5ex}{1.5ex}}
\newenvironment{proof}{\par\noindent{\bf Proof\ }}{\hfill\BlackBox\\[2mm]}
\newtheorem{remark}{Remark}
\newtheorem{assumption}{Assumption}
\DeclareMathOperator*{\argmin}{argmin}
\newtheorem{definition}{Definition}[section]
\newtheorem{lemma}{Lemma}[section]
\newtheorem{theorem}{Theorem}[section]
\newcommand{\beq}{\begin{equation}}
\newcommand{\eeq}{\end{equation}}
\newcommand{\beqa}{\begin{eqnarray}}
\newcommand{\eeqa}{\end{eqnarray}}
\newcommand{\beqas}{\begin{eqnarray*}}
\newcommand{\eeqas}{\end{eqnarray*}}
\newcommand{\bi}{\begin{itemize}}
\newcommand{\ei}{\end{itemize}}
\newcommand{\ba}{\begin{array}}
\newcommand{\ea}{\end{array}}
\newcommand{\nn}{\nonumber}
\def\eqnok#1{(\ref{#1})}
\def\argmin{{\rm argmin}}
\def\vgap{\vspace*{.1in}}
\def\E{{\bf E}}
\def\exp{{\rm exp}}
\newcommand{\bbr}{\Bbb{R}}
\def\cS{{\cal S}}
\newcommand{\dd}{\xi}  
\newcommand{\pp}{x} 
\newcommand{\setpp}{\mathcal{X}} 
\title {Zeroth-order Nonconvex Stochastic Optimization: Handling Constraints, High-Dimensionality and Saddle-Points\footnote{Both authors contributed equally and are listed in alphabetical order}}
\author[1]{Krishnakumar Balasubramanian\thanks{kbala@ucdavis.edu}}
\author[2]{Saeed Ghadimi\thanks{sghadimi@princeton.edu}}
\affil[1]{Department of Statistics, University of California, Davis}
\affil[2]{Department of Operations Research and Financial Engineering, Princeton University}
\date{}
\begin{document}
\maketitle
\begin{abstract}
 In this paper, we propose and analyze zeroth-order stochastic approximation algorithms for nonconvex and convex optimization, with a focus on addressing constrained optimization, high-dimensional setting and saddle-point avoiding. To handle constrained optimization, we first propose generalizations of the conditional gradient algorithm achieving rates similar to the standard stochastic gradient algorithm using only zeroth-order information. To facilitate zeroth-order optimization in high-dimensions, we explore the advantages of  structural sparsity assumptions. Specifically, (i) we highlight an implicit regularization phenomenon where the standard stochastic gradient algorithm with zeroth-order information adapts to the sparsity of the problem at hand by just varying the step-size and (ii) propose a truncated stochastic gradient algorithm with zeroth-order information, whose rate of convergence depends only poly-logarithmically on the dimensionality. We next focus on avoiding saddle-points in non-convex setting. Towards that, we interpret the Gaussian smoothing technique for estimating gradient based on zeroth-order information as an instantiation of first-order Stein's identity. Based on this, we provide a novel linear-(in dimension) time estimator of the Hessian matrix of a function using only zeroth-order information, which is based on second-order Stein's identity. We then provide an algorithm for avoiding saddle-points, which is based on a zeroth-order cubic regularization Newton's method and discuss its convergence rates.
\end{abstract}

\newpage
\setcounter{equation}{0}
\section{Introduction}
In this work, we propose and analyze algorithms for solving the following stochastic optimization problem
\beq \label{eq:main_prob}
\min_{\pp \in \setpp} \left\{ f(\pp)  = \E_\dd[F(\pp, \dd)]= \int F(\pp, \dd) \, dP(\dd) \right\},
\eeq
where $\setpp$ is a closed convex subset of $\mathbb{R}^d$. The case of nonconvex objective function $f$ is ubiquitous in modern statistical machine learning problems and developing provable algorithms for such problems has been a topic of intense research in the recent years~\cite{nocedal2006nonlinear,  bertsekas1999nonlinear,jain2017non}, along with the more standard convex case~\cite{ben2001lectures, boyd2004convex,nesterov2013introductory,bertsekas2015convex, beck2017first}. Several methods are available for solving such stochastic optimization problems under access to different oracle information, for example, function queries (zeroth-order oracle), gradient queries (first-order oracle), Hessian-queries (second-order oracle), and similar higher-order oracles. In this work, we assume that we only have access to noisy evaluation of $f$ through a stochastic zeroth-order oracle described in detail in Assumption~\ref{unbiased_assum}. This oracle setting is motivated by several applications where only noisy function queries of problem \eqnok{eq:main_prob} is available and obtaining higher-order information might not be possible. Such a situation occurs frequently for example, in simulation based modeling~\cite{rubinstein2016simulation}, selecting the tuning parameters of deep neural networks~\cite{snoek2012practical} and design of black-box attacks to deep networks~\cite{chen2017zoo}. It is worth noting that recently such zeroth-order optimization techniques have also been applied in the field of reinforcement learning~\cite{salimans2017evolution, choromanski18a, mania2018simple}. Furthermore, methods using similar oracles have been studied in the literature under the name of derivative-free optimization~\cite{spall2005introduction,conn2009introduction}, bayesian optimization~\cite{mockus2012bayesian} and optimization with bandit feedback~\cite{bubeck2012regret}.


Algorithms available for solving problem \eqnok{eq:main_prob} depend crucially on the constraint set $\setpp$, along with the structure imposed on the objective function, $f$. Despite decades of work in zeroth-order optimization literature, there still exists several challenges, primarily motivated by contemporary statistical machine learning problems. A majority of the existing zeroth-order algorithms are predominantly analyzed in the low-dimensional unconstrained setting. Furthermore, when $f$ is non-convex, apart from the first-order stationarity result for gradient descent (GD) algorithm in~\cite{GhaLan12}, other meaningful theoretical results are lacking in the zeroth-order optimization literature. In this work, we provide theoretically sound algorithms to address the following three main drawbacks of existing zeroth-order optimization methods.

The first issue we address is that of \textbf{constrainted zeroth-order stochastic optimization}. For the problem in~\eqnok{eq:main_prob}, depending on the geometry of the constraint set $\setpp$, the cost of computing the projection to the set might be prohibitive. In the first-order oracle setting, this lead to the re-emergence of Conditional Gradient (CG) algorithms recently~\cite{hazan2012projection, jaggi2013revisiting}. But the performance of the CG algorithm under the zeroth-order oracle is unexplored in the literature to the best of our knowledge, both under convex and nonconvex settings. Hence it is natural to ask if CG algorithms, with access to zeroth-order oracle has similar convergence rates compared to zeroth-order GD algorithms for the unconstrained case. To address this question, we propose and analyze in Section~\ref{sec:vanillacgd} a classical version of CG algorithm with zeroth-order information and provide convergence results. We then propose a modification in Section~\ref{sec:improvedcgd} that has improved rates, when $f$ is convex. Notably, we demonstrate that with zeroth-order information, the complexity of CG algorithms also depend linearly on the dimensionality, similar to the GD algorithms, thereby facilitating constrained zeroth-order optimization.


Next, we consider the \textbf{impact of dimensionality in zeroth-order optimization}. Considering the unconstrained case of $\setpp = \mathbb{R}^d$, recall that when first-order information is available, the rate of convergence of the standard Gradient Descent (GD) algorithm is dimension-independent~\cite{nesterov2013introductory}. Whereas when only the zeroth-order information is available, any algorithm (with estimated gradients) has (at least) linear dependence on $d$~\cite{GhaLan12,jamieson2012query,duchi2015optimal}. This illustrates yet another difference between the availability of first and zeroth-order oracle information. We refer to this situation as the low-dimensional setting in the rest of the paper. This motivates us to examine assumptions under which one can achieve weaker dependence on the dimensionality while optimizing with zeroth-order~information. In a recent work ~\cite{wang18e}, the authors used a \emph{functional sparsity} assumption, under which the function $f:\mathbb{R}^d \to \mathbb{R}$ to be optimized depends only on $s$ of the $d$ components, and proposed a LASSO based algorithm that has poly-logarithmic dependence on the dimensionality when $f$ is convex. We refer to this situation as the high-dimensional setting. In this work, we perform a refined analysis under a similar sparsity assumption for both convex and nonconvex objective functions. When the performance is measured by the size of the gradient, we show in Section~\ref{sec:gdinhd} that zeroth-order GD algorithm (without using thresholding or LASSO approach of~\cite{wang18e}), has poly-logarithmic dependence on the dimensionality thereby demonstrating an \emph{implicit regularization} phenomenon in this setting. Note that this is applicable for both convex and nonconvex objectives. When the performance is measured by function values (as in the case of convex objective), we show that a simple thresholded zeroth-order GD algorithm achieves a poly-logarithmic dependence on dimensionality. This algorithm is notably less expensive than the algorithm proposed by~\cite{wang18e}.

Finally, we address the issue of \textbf{avoiding saddle-points in zeroth-order stochastic optimization}. When the function $f$ is non-convex, designing algorithms that avoid saddle-points and converge to local minimizers is challenging, as exemplified by worst-case computational hardness results~\cite{murty1987some, cartis2018second}. Hence, it is necessary to impose  further structure on the problem to obtain meaningful results. A particularly interesting structure on $f$ is the so-called strict saddle property, which necessitates that all local minima are global minima. This structure has regained popularity as several useful stochastic optimization problems in statistical machine learning are shown to posses this property; for example, phase retrieval~\cite{sun2018geometric}, tensor decomposition~\cite{ge2015escaping}, matrix completion and sensing~\cite{bhojanapalli2016global,ge2016matrix} and training deep neural networks~\cite{kawaguchi2019elimination}. See also the survey article~\cite{sun2015nonconvex}. Motivated by this, algorithms that avoid saddle-points and converge to second-order stationary points have re-gained popularity as well. Indeed, methods based on exact or in-exact second-order oracle naturally converge to second-order stationary points~\cite{NestPoly06-1, cartis2011adaptivea, cartis2011adaptiveb, xu2017newton, tripuraneni2017stochastic, carmon2018accelerated, allen2018natasha}. Furthermore, first-order methods escape saddle points by leveraging an additional noise term in each iteration; for example~\cite{ge2015escaping, jin2017escape, reddi2018generic} and the references therein. But to the best of our knowledge, there is no algorithm for efficiently avoiding saddle-points under zeroth-order oracle information. In this work, we propose a zeroth-order cubic regularized Newton method, that converges efficiently to second-order stationary points with just noisy function evaluations. In order to do so, we interpret the Gaussian smoothing for zeroth-order gradient estimation~\cite{NesSpo17}, as an instantiation of Stein's identity~\cite{stein1972bound, stein1981estimation}. Based on this interpretation, we develop provable techniques for estimating the Hessian of a function at a point with just function queries, leveraging higher-order Stein's identity. Notably, our Hessian estimator is based only on inner-product  evaluations thereby having a linear-in-dimension time runtime. We also provide a comprehensive complexity analysis of the proposed algorithm in terms of achieving second-order stationary points. \\

\noindent \textbf{Our contributions:} To summarize the above discussion, in this paper we make the following contributions to the literature on zeroth-order stochastic optimization.
\begin{enumerate}
\item We first analyze a classical version of CG algorithm in the nonconvex (and convex) setting, under access to zeroth-order information and provide results on the convergence rates in the low-dimensional setting. We then propose and analyze a modified CG algorithm in the convex setting with zeroth-order information and show that it attains improved rates in the low-dimensional setting.
\item Next, we consider a zeroth-order stochastic gradient algorithm in the high-dimensional nonconvex setting and illustrate an implicit regularization phenomenon --the algorithm converges to first-order stationary points with rates that depend only poly-logarithmically on dimensionality. We also propose a truncated zeroth-order stochastic gradient algorithm in the convex setting which also depends only poly-logarithmically on the dimensionally but has improved dependence on the error-tolerance. 
\item Finally, we propose a zeroth-order Stochastic cubic regularized Newton method that avoids saddle points and converges to second-order stationary points efficiently. Our algorithm is based on a novel technique for estimating the Hessian of a function from function queries based on Stein's identities. 
\end{enumerate}
Our contributions extend the applicability of zeroth-order stochastic optimization to the constrained, high-dimensional and non-convex settings and also provide theoretical insights in the form of rates of convergence. A summary of the results is provided in Table~\ref{tab:summary}. 

\begin{table}[t!]
\centering
\begin{tabular}{| c| c | c | c | c|}
\hline
 Algorithm  & Structure & Function Queries  & References   \\
 \hline
 \hline
 \multirow{2}{*}{ZSCG (Alg~\ref{alg_ZCGD})}  & Nonconvex & ${\cal O}(d/\epsilon^4)$   & \multirow{2}{*}{Theorem~\ref{theorem_CGD}} \\
\cline{2-3}
  & Convex &${\cal O}(d/\epsilon^3)$ &  \\
\hline
Modified ZSCG (Alg~\ref{alg_ZCGDSC2})  & Convex &${\cal O}(d/\epsilon^2)$ &  Theorem~\ref{theom_ZCGDSC_cvx} \\
\hline
ZSGD (Alg~\ref{alg_ZGD}) & Nonconvex, $s$-sparse &   ${\cal O}\left((s \log d)^2/\epsilon^4\right)$ &  Theorem~\ref{nocvx} \\
\hline
Truncated ZSGD (Alg~\ref{alg_TZGD})  & Convex, $s$-sparse & ${\cal O}\left(s( \log d/\epsilon)^2\right)$    & Theorem~\ref{thm:trunconv}   \\
\hline
\multirow{2}{*}{ZSGD}  & Convex &  ${\cal O}( d/\epsilon^2)$ &  \cite{jamieson2012query,duchi2015optimal, GhaLan12} \\
\cline{2-4}
 & Nonconvex &  ${\cal O}( d/\epsilon^4)$ & \cite{GhaLan12} \\
\hline
ZCRN (Alg~\ref{alg_ZSCRN}) & Nonconvex & ${\cal O} \left(\frac{d}{\epsilon^{3.5}}\right)+ \tilde {\cal O} \left(\frac{d^4}{\epsilon^{2.5}}\right) $& Theorem~\ref{thm:main_cubic}\\
\hline
\end{tabular}
\caption{A list of complexity bounds for stochastic zeroth-order methods to find an $\epsilon$-optimal or $\epsilon$-stationaly or $\epsilon$-local optimal (see Definition~\ref{def_complx}) point of problem \eqnok{eq:main_prob}. Here, $\tilde {\cal O}$ hides $\log$ factors in $d$.}
\label{tab:summary}
\end{table}

\subsection{Preliminaries}\label{sec:prelim}
We now list the main assumptions we make in this work. Additional assumptions will be introduced in the appropriate sections as needed. We start with the assumption on the zeroth-order oracle.
\begin{assumption}\label{unbiased_assum}
Let $\|\cdot  \|$ be a norm on $\mathbb{R}^d$. For any $\pp \in \bbr^d$, the zeroth-order oracle outputs an estimator $F(\pp,\dd)$ of $f(\pp)$ such that $\E[F(\pp,\dd)] = f(\pp), \E[\nabla F(\pp,\dd)] = \nabla f(\pp), \E[\|\nabla F(\pp,\dd) - \nabla f(\pp)\|_*^2] \le \sigma^2$, where $\|\cdot\|_*$ denotes the dual norm.
\end{assumption}
It should be noted that in the above assumption, we do not observe $\nabla F(\pp,\dd)$ and we just assume that it is an unbiased estimator of gradient of $f$ and its variance is bounded. Furthermore, we make the following smoothing assumption about the noisy estimation of $f$.
\begin{assumption}\label{smooth_assum}
Function $F$ has Lipschitz continuous gradient with constant $L$, almost surely for any $\dd$, i.e., $
\|\nabla F(y,\dd) -\nabla F(\pp,\dd)\|_* \le L\|y-\pp\|,
$
which consequently implies that\\
$
|F(y,\dd) - F(\pp,\dd) - \langle \nabla F(\pp,\dd), y-\pp \rangle| \le \frac{L}{2}\|y-\pp\|^2.
$
\end{assumption}
It is easy to see that the above two assumptions imply that $f$ also has Lipschitz continuous gradient with constant $L$ since 
\beq\label{fgrad_smooth}
\|\nabla f(y) - \nabla f(\pp)\|_* \le \E \left[\|\nabla F(y,\dd) - \nabla F(\pp,\dd)\|_*\right] \le L \|y-\pp\|
\eeq
due the Jensen's inequality for the dual norm. We now collect some facts about a gradient estimator based on the above zeroth-order~information. Let $ u \sim N(0,I_d)$ be a standard Gaussian random vector. For some $\nu \in (0,\infty)$ consider the smoothed function $f_\nu(\pp) = \E_u \left[f(\pp+ \nu u) \right]$. Nesterov~\cite{NesSpo17} has shown that $\nabla f_\nu(\pp) =$
\begin{align}\label{eq:gradest}
 \E_u \left[\frac{f(\pp+\nu u)}{\nu}~u\right] = \E_u \left[ \frac{f(\pp+\nu u) - f(\pp)}{\nu}~u\right]  = \frac{1}{(2\pi)^{d/2}} \int \frac{f(\pp+\nu u) - f(\pp)}{\nu}~u ~e^{-\frac{\|u\|_2^2}{2}}~du.
\end{align}
This relation implies that we can estimate gradient of $f_\nu$ by only using evaluations of $f$. In particular, one can define stochastic gradient of $f_\nu(\pp)$ as
\beq \label{def_gradk}
 G_\nu(\pp, \dd, u) = \frac{F(\pp+ \nu u, \dd) - F(\pp, \dd)}{\nu}~u,
\eeq
which is an unbiased estimator of $\nabla f_\nu(\pp)$ under Assumption~\ref{unbiased_assum} since
$$
\E_{u,\dd}[G_\nu(\pp, \dd, u)]= \E_{u}[\tfrac{f(\pp+ \nu u) - f(\pp)}{\nu}~u]=\nabla f_\nu(\pp).
$$
We leverage the following properties of $f_\nu$ due to Nesterov~\cite{NesSpo17} in our proofs later, which we replicate below for completeness.
\begin{theorem}[~\cite{NesSpo17}]\label{smth_approx} For a Gaussian random vector $u\sim N(0,I_d) $ we have that 
\begin{align}\label{eq:l2gauss}
\E[\|u\|^k] \le (d+k)^{k/2}
\end{align} 
for any $k \ge 2$. Moreover, the following statements hold for any function $f$ whose gradient is Lipschitz continuous with constant $L$.
\begin{itemize}
\item [a)] The gradient of $f_{\nu}$ is Lipschitz continuous with constant $L_{\nu}$ such that $L_{\nu} \le L$.
\item [b)] For any $x \in \bbr^d$,
\beqa
|f_{\nu}(x)-f(x)| &\le& \frac{\nu^2}{2} L d, \label{rand_smth_close}\\ 
\|\nabla f_{\nu}(x) - \nabla f(x)\| &\le& \frac{\nu}{2}L (d+3)^{\frac{3}{2}}\label{rand_smth_close_grad}.
\eeqa
\item [c)]
For any $x \in \bbr^n$,
\beq \label{stoch_smth_approx_grad}
\frac{1}{\nu^2}\E_u[\{f(x+\nu u)-f(x)\}^2\|u\|^2] \le \frac{ \nu^2}{2}L^2(d+6)^3 + 2(d+4)\|\nabla f(x)\|^2.
\eeq
\end{itemize}

\end{theorem}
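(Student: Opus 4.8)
The plan is to first prove the Gaussian moment bound \eqref{eq:l2gauss}, since parts (a)--(c) all reduce to it once combined with the quadratic smoothness estimate of Assumption~\ref{smooth_assum}. For the moment bound, I would compute $M_k \eqdef \E[\|u\|^k]$ explicitly. Passing to spherical coordinates turns the Gaussian integral into a one-dimensional one, and the substitution $t=\|u\|^2/2$ expresses it through the Gamma function as $M_k = 2^{k/2}\,\Gamma\!\big(\tfrac{d+k}{2}\big)/\Gamma\!\big(\tfrac{d}{2}\big)$. The identity $\Gamma(z+1)=z\Gamma(z)$ then yields the clean recursion $M_{k+2}=(d+k)M_k$ with $M_2=d$, so for even $k$ one gets $M_k=\prod_{j=0}^{k/2-1}(d+2j)\le (d+k)^{k/2}$ immediately. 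For the general real exponent $k\ge 2$ needed below (we use $k=3$ and $k=6$), I would upgrade this to $\Gamma(a+b)/\Gamma(a)\le (a+b)^b$ for $b\ge 1$, which follows from log-convexity of $\Gamma$: writing $\log\Gamma(a+b)-\log\Gamma(a)=\int_a^{a+b}\psi(t)\,dt$ with $\psi$ the increasing digamma function and using $\psi(t)\le\log t$ bounds the integral by $b\log(a+b)$. Setting $a=d/2,\ b=k/2$ gives $M_k\le (d+k)^{k/2}$.

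For part (a), differentiation under the expectation (justified by the linear growth $\|\nabla f(x+\nu u)\|\le \|\nabla f(x)\|+L\nu\|u\|$ and the finiteness of all Gaussian moments) gives $\nabla f_\nu(x)=\E_u[\nabla f(x+\nu u)]$; hence $\|\nabla f_\nu(x)-\nabla f_\nu(y)\|\le \E_u\|\nabla f(x+\nu u)-\nabla f(y+\nu u)\|\le L\|x-y\|$, so $L_\nu\le L$. For the first bound of part (b), I would use $\E_u[u]=0$ to subtract the first-order term and apply Assumption~\ref{smooth_assum}: $|f_\nu(x)-f(x)|=|\E_u[f(x+\nu u)-f(x)-\nu\langle\nabla f(x),u\rangle]|\le \tfrac{L\nu^2}{2}\E\|u\|^2=\tfrac{L\nu^2}{2}d$, using the exact value $\E\|u\|^2=d$. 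For the second bound of part (b), I would exploit the identity $\nabla f(x)=\E_u[\langle\nabla f(x),u\rangle u]$ (since $\E[uu^\top]=I$) to write, via \eqref{eq:gradest}, $\nabla f_\nu(x)-\nabla f(x)=\E_u\big[\tfrac{1}{\nu}\{f(x+\nu u)-f(x)-\nu\langle\nabla f(x),u\rangle\}\,u\big]$; taking norms and applying the quadratic bound gives $\le \tfrac{L\nu}{2}\E\|u\|^3\le \tfrac{L\nu}{2}(d+3)^{3/2}$ by \eqref{eq:l2gauss} with $k=3$.

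For part (c), I would set $g=f(x+\nu u)-f(x)$, split $g=\nu\langle\nabla f(x),u\rangle+r$ with $|r|\le \tfrac{L\nu^2}{2}\|u\|^2$, and use $(a+b)^2\le 2a^2+2b^2$ to get $g^2\|u\|^2\le 2\nu^2\langle\nabla f(x),u\rangle^2\|u\|^2+2r^2\|u\|^2$. Dividing by $\nu^2$ and taking expectations, the remainder term is controlled by $\tfrac{L^2\nu^2}{2}\E\|u\|^6\le \tfrac{L^2\nu^2}{2}(d+6)^3$ via \eqref{eq:l2gauss} with $k=6$, while the main term needs the fourth-order Gaussian moment $\E_u[\langle\nabla f(x),u\rangle^2\|u\|^2]$; by rotational invariance (reduce to $\nabla f(x)=\|\nabla f(x)\|e_1$) together with $\E[u_1^4]=3$ and $\E[u_1^2 u_k^2]=1$ for $k\ne 1$, this equals $(d+2)\|\nabla f(x)\|^2\le (d+4)\|\nabla f(x)\|^2$, giving the claimed $2(d+4)\|\nabla f(x)\|^2$.

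The main obstacle is the moment bound \eqref{eq:l2gauss} for non-integer exponents: the recursion $M_{k+2}=(d+k)M_k$ settles the even case effortlessly, but covering general $k\ge 2$ (needed for $k=3$) requires the Gamma-function estimate $\Gamma(a+b)/\Gamma(a)\le (a+b)^b$, i.e.\ the log-convexity/digamma argument above. Everything else is a direct application of the quadratic smoothness inequality combined with elementary Gaussian moment computations.
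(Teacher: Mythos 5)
Your proof is correct, but note that the paper itself does not prove this theorem at all: it is imported verbatim from Nesterov and Spokoiny \cite{NesSpo17} "for completeness," so there is no in-paper argument to compare against. What you have produced is a self-contained reconstruction, and every step checks out: the $\chi^2$-moment formula $M_k=2^{k/2}\Gamma(\tfrac{d+k}{2})/\Gamma(\tfrac{d}{2})$, the digamma bound $\psi(t)\le\log t$ giving $\Gamma(a+b)/\Gamma(a)\le(a+b)^b$, the exchange of $\nabla$ and $\E_u$ for part (a), the centering by $\E_u[u]=0$ and $\E_u[uu^\top]=I_d$ for part (b), and the decomposition $g=\nu\langle\nabla f(x),u\rangle+r$ with $\E[u_1^2\|u\|^2]=d+2\le d+4$ and $\E\|u\|^6\le(d+6)^3$ for part (c). The one place where your route genuinely differs from the source is the moment bound \eqref{eq:l2gauss}: Nesterov--Spokoiny prove it by a Laplace-type estimate, splitting $e^{-\|u\|^2/2}=e^{-\tau\|u\|^2/2}e^{-(1-\tau)\|u\|^2/2}$, bounding $\|u\|^k e^{-\tau\|u\|^2/2}\le(k/(\tau e))^{k/2}$ pointwise, integrating the remaining Gaussian to get $(1-\tau)^{-d/2}$, and optimizing $\tau=k/(d+k)$; your Gamma-function identity plus log-convexity is an exact computation followed by a clean analytic bound, arguably tighter in spirit (it gives the precise value of $M_k$ before relaxing) and equally valid for all real $k\ge 2$, which is exactly what the applications with $k=3$ and $k=6$ require.
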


We next introduce the Stein's identity, popular in the statistics and probability theory literature.
\begin{theorem}[~\cite{stein1972bound,stein1981estimation}]
Let $u \sim N(0,I_d)$, be a standard Gaussian random vector and let $g: \mathbb{R}^d \to \mathbb{R}$, be an almost-differentiable function~\footnote{For a definition of almost-differentiable function, we refer the reader to Definition 1 in~\cite{stein1981estimation}} with $\E\left[\|\nabla g(u)\| \right] < \infty$, we have
\begin{align}\label{eq:steinmain}
\E\left[u~g(u)\right] = \E\left[\nabla g(u)\right].
\end{align}
Furthermore, when the function $g$ has a twice continuously differentiable Hessian, $\nabla^2g(\cdot)$, we have the following (where the Expectation is assumed to exist):
\begin{align}\label{eq:secondstein}
\E[(u u^\top-I_d)~g(u)] = \E[\nabla^2g(u)].
\end{align}
\end{theorem}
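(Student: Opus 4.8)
The plan is to reduce both identities to integration by parts against the standard Gaussian density, exploiting the fact that the Gaussian is (up to sign) its own logarithmic derivative. Write $\phi(u) = (2\pi)^{-d/2} e^{-\|u\|_2^2/2}$ for the density of $u \sim N(0,I_d)$. The crucial observation is the pair of differential identities $\nabla \phi(u) = -u\,\phi(u)$ and $\nabla^2 \phi(u) = (uu^\top - I_d)\,\phi(u)$, both obtained by directly differentiating the exponential. These turn the left-hand sides of \eqref{eq:steinmain} and \eqref{eq:secondstein} into integrals of $g$ against derivatives of $\phi$, so that the entire content of the theorem becomes a matter of moving those derivatives onto $g$.

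For the first identity I would write, componentwise, $\E[u_i\, g(u)] = \int u_i\, g(u)\,\phi(u)\,du = -\int g(u)\,\partial_i \phi(u)\,du$, and then transfer the derivative onto $g$ by integration by parts, yielding $\int \partial_i g(u)\,\phi(u)\,du = \E[\partial_i g(u)]$. Stacking the coordinates gives $\E[u\,g(u)] = \E[\nabla g(u)]$. For the second identity the same manipulation applied twice gives $\E[(uu^\top - I_d)\,g(u)] = \int g(u)\,\nabla^2\phi(u)\,du = \int \nabla^2 g(u)\,\phi(u)\,du = \E[\nabla^2 g(u)]$, where the middle step integrates by parts once in each of the two differentiated coordinates.

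The main obstacle --- and the only place the regularity hypotheses enter --- is justifying the integration by parts, i.e.\ showing that the boundary terms at infinity vanish and that all integrals converge. Since $g$ is only assumed almost-differentiable, I would not differentiate $g$ pointwise but instead use the absolute-continuity-along-lines characterization of such functions: fix a coordinate $i$, condition on the remaining coordinates $u_{-i}$, and apply the one-dimensional fundamental theorem of calculus to the map $t \mapsto g(\ldots,t,\ldots)$. The univariate Gaussian tail decay of $\phi$ annihilates the boundary contributions, while Fubini's theorem together with the integrability assumption $\E[\|\nabla g(u)\|] < \infty$ (respectively the assumed existence of $\E[\nabla^2 g(u)]$) legitimizes the interchange of integration and differentiation. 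This reduction to the one-dimensional case is precisely what makes the almost-differentiability hypothesis the right condition, and it is the technically delicate part of the argument.
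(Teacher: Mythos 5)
Your argument is correct and is the classical proof of Stein's identity: the differential identities $\nabla\phi(u)=-u\,\phi(u)$ and $\nabla^2\phi(u)=(uu^\top-I_d)\,\phi(u)$ followed by integration by parts, with the almost-differentiability hypothesis used exactly where it should be (to justify the one-dimensional fundamental theorem of calculus along coordinate lines, with Fubini and the integrability of $\|\nabla g(u)\|$ controlling the interchange and killing the boundary terms). Note, however, that the paper itself offers no proof of this statement --- it is quoted verbatim from \cite{stein1972bound,stein1981estimation} --- so there is no internal argument to compare against; your sketch matches the proof in Stein's 1981 paper (Lemma 2 there), where the rigorous version of your ``FTC along lines'' step writes $g(t,u_{-i})-g(0,u_{-i})=\int_0^t \partial_i g(s,u_{-i})\,ds$ and uses the identity $\int_s^\infty t\,e^{-t^2/2}\,dt=e^{-s^2/2}$ in place of an explicit boundary-term computation. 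The only point worth tightening is the second identity: rather than integrating by parts ``once in each of the two differentiated coordinates'' directly (which requires re-justifying decay of $\partial_j g\cdot\phi$ at infinity), it is cleaner to apply the already-established first-order identity twice --- once to $h(u)=u_i g(u)$, giving $\E[u_iu_jg]=\delta_{ij}\E[g]+\E[u_i\partial_jg]$, and once to $\partial_j g$, giving $\E[u_i\partial_jg]=\E[\partial_i\partial_jg]$ --- which yields \eqref{eq:secondstein} under the stated hypothesis that $\E[\nabla^2 g(u)]$ exists.
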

Based on the above theorem, the Gaussian smoothing approach of estimating gradients from function queries proposed by~\cite{NesSpo17}, is indeed based on  Stein's identity. Indeed, if we let $g(u) = f(x+ \nu u)$ in  Equation~\ref{eq:steinmain}, it is easy to see that the identity in Equation~\ref{eq:gradest} holds by simply evaluating the Gaussian Stein's identity in Equation~\ref{eq:steinmain}. We elaborate more on this connection and extensions in Section~\ref{sec:second}. We conclude the section, by defining the following criterion which are used to analyze the complexity of our proposed algorithms.
\begin{definition} \label{def_complx}
Assume that a solution $\bar \pp \in \setpp$ as output of an algorithm and a target accuracy $\epsilon>0$ are given. Then:
\begin{itemize}
\item If $f$ is convex, $\bar \pp$ is called an $\epsilon$-optimal point of problem \eqnok{eq:main_prob} if $\E[f(\bar x)]-f(x_*) \le \epsilon$, where $x_*$ denotes an optimal solution of the problem.
\item If $f$ is nonconvex, $\bar \pp$ is called an $\epsilon$-stationary point of the unconstrained variant of problem \eqnok{eq:main_prob} if $\E[\|\nabla f(\bar x)\|_*] \le \epsilon$. For the constrained case, $\bar \pp$ should satisfies $\E[\langle \nabla f(\bar \pp), \bar \pp - u \rangle] \le \epsilon$ for all $u \in \setpp$.
\item If $f$ is nonconvex, $\bar \pp$ is called an $\epsilon$-local optima of the unconstrained variant of problem \eqnok{eq:main_prob} if 
\beq
\max \left\{\sqrt{\E[\|\nabla f(\bar{x})\|_*]}
,\frac{-1}{\sqrt{\lambda_{\max}(\nabla^2 f)}}~\E[\lambda_{\min} \left(\nabla^2 f(\bar{x}) \right)] \right\} \le \sqrt{\epsilon} \nonumber
\eeq
where for a symmetric matrix $A$, $\lambda_{\min}(A)$ and $\lambda_{\max}(A)$ denotes the minimum and maximum eigenvalue.
\end{itemize}
\end{definition}
It should be pointed out that while the above performance measures are presented in expectation form, one can also use their high probability counterparts. Since, convergence results in this case can be obtained by making sub-Gaussian tail assumptions on the output of the zeroth-order oracle and using the standard two-stage process presented in \cite{GhaLan12,lan2016conditional}, we do not elaborate more on this approach. Furthermore, note that the aforementioned measures for evaluating the algorithms are from the derivative-free optimization point of view. In the literature on optimization with bandit feedback, the preferred performance measure is the so-called regret of the algorithm~\cite{bubeck2012regret,shamir2013complexity} which may have a different behavior than our performance measures. 

\setcounter{equation}{0}
\section{Handling Constraints: Zeroth-order Stochastic Conditional Gradient Type Method}\label{sec:vanillacgd}
In this section, we study zeroth-order stochastic conditional gradient (ZSCG) algorithms in the low-dimensional setting for solving constrained stochastic optimization problems. In particular, we incorporate a variant of the gradient estimate defined in \eqnok{def_gradk} into the framework of the classical CG method and provide its convergence analysis in Subsection~\ref{sec:zscg}.  We also present improved rates for a variant of this method in Subsection~\ref{sec:improvedcgd} when $f$ is convex. Throughout this section, we assume that $\bbr^d$ is equipped with the self-dual Euclidean norm i.e., $\|\cdot\| = \|\cdot\|_2$. We also make the following natural boundedness assumption.

\begin{assumption}\label{bnd_grad}
The feasible set $\setpp$ is bounded such that $\max_{x,y \in \setpp} \|y-x\| \le D_\setpp$ for some $D_\setpp >0$. Moreover, for all $\pp \in \setpp$, there exists a constant $B>0$ such that $\| \nabla f(\pp)\| \leq B $.
\end{assumption}
We should point out that under Assumptions~\ref{unbiased_assum} and \ref{smooth_assum}, the second statement in Assumption~\ref{bnd_grad} follows immediately by the first one and choosing $B:= LD_\setpp +\|\nabla f(x_*)\|$. However, we just use $B$ in our analysis for simplicity.

\subsection{Zeroth-order Stochastic Conditional Gradient Method}\label{sec:zscg}\vspace{-0.07in}

The vanilla ZSCG method is formally presented in Algorithm~\ref{alg_ZCGD} and a few remarks about it follows.
\begin{algorithm} [t]
	\caption{Zeroth-order Stochastic Conditional Gradient Method}
	\label{alg_ZCGD}
	\begin{algorithmic}
\State Input: $z_0 \in \setpp$, smoothing parameter $\nu>0$, non-negative sequence $\alpha_k$, positive integer sequence $m_k$, iteration limit $N\geq 1$ and probability distribution $P_R(\cdot)$ over $\{1,\ldots,N\}$.

\For {$k = 1, \ldots, N$}
\State 1. Generate $u_k=[u_{k,1},\ldots, u_{k,m_k}]$, where $u_{k,j} \sim N(0,I_d)$, call the stochastic oracle to compute $m_k$ stochastic gradient $G_\nu^{k,j}$ according to \eqnok{def_gradk} and take their average:
\beq \label{def_bargradk}
\bar{G}_\nu^k \equiv \bar{G}_\nu(z_{k-1}, \dd_k, u_k) =  \frac{1}{m_k} \sum_{j=1}^{m_k} \frac{F(z_{k-1}+ \nu u_{k,j}, \dd_{k,j}) - F(z_{k-1}, \dd_{k,j})}{\nu}~u_{k,j}.
\eeq
\State 2. Compute
\begin{align}
\pp_{k}&= \underset{u \in \setpp}{\argmin} \langle \bar{G}_\nu^k , u \rangle, \label{def_xk_GD} \\
z_k & = (1- \alpha_k) z_{k-1} + \alpha_k \pp_k.\label{def_zk_GD}
\end{align}
\EndFor
\State Output: Generate $R$ according to $P_R(\cdot)$ and output $z_R$.
\end{algorithmic}
\end{algorithm}
First, note that this algorithm differs from the classical CG method in estimating the gradient using zeroth-order information and in outputting a random solution from the generated trajectory. This randomization scheme is the current practice in the literature to provide convergence results for nonconvex stochastic optimization (see e.g.,~\cite{GhaLan12,ReSrPoSm16}). Second, $\bar{G}_\nu^k$ is the averaged variant of the gradient estimator presented in Subsection~\ref{sec:prelim} and is still an unbiased estimator of $\nabla f_\nu(z_{k-1})$. Moreover, it can be easily seen that it has a reduced variance with respect to the individual estimators i.e.,
\beq \label{var0}
\E [\|\bar{G}_\nu^k - \nabla f_\nu (z_{k-1})\|^2] \le \frac{1}{m_k} \E [\|G_\nu^{k,j}- \nabla f_\nu(z_{k-1})\|^2].
\eeq
We emphasize that the use of the above variance reduction technique in stochastic CG methods is standard and has been previously proposed and leveraged in several works (see e.g.,~\cite{lan2016conditional,hazan2016variance,ReSrPoSm16, mokhtari2018conditional, mokhtari2018stochastic, Ghadimi2018}). Indeed, when exact gradient is not available, an error term appears in the convergence analysis which should converge to $0$ at a certain rate as the algorithm moves forward. Hence, the choice of $m_k$ plays a key role in the convergence analysis of Algorithm~\ref{alg_ZCGD}. $\bar{G}_\nu^k$  can be also viewed as a biased estimator for $\nabla f(z_{k-1})$. 
Finally, since $f$ is possibly nonconvex, we need a different criteria than the optimality gap to provide convergence analysis of Algorithm~\ref{alg_ZCGD}. The well-known Frank-Wolfe Gap given by
\beq
g_{_\setpp}^k \equiv g_{_\setpp}(z_{k-1}):=\langle \nabla f(z_{k-1}), z_{k-1} - \hat{\pp}_k \rangle, \ \ \text{where} \ \
\hat \pp_{k}= \underset{u \in \setpp}{\argmin} \langle  \nabla f(z_{k-1}) , u \rangle, \label{FWGap}
 \eeq
has been widely use in the literature to show rate of convergence of the CG methods when $f$ is convex (see e.g., \cite{FW56,DemRub70,Hearn82}). In this case, it is easy to see that
\beq \label{FWGap_cvx}
f(z_{k-1}) - f^* \leq g_{_\setpp}(z_{k-1}).
\eeq
When $f$ is nonconvex, this criteria is still useful since $
\langle \nabla f(z_{k-1}), z_{k-1}-u \rangle \le  g_{_\setpp}(z_{k-1}),~\forall u \in \setpp$, which implies that one can obtain an approximate stationary point of problem \eqnok{eq:main_prob} by minimizing $g_{_\setpp}^k $, in the view of Definition~\ref{def_complx}. Note that in our setting, this quantity is not exactly computable and it is only used to provide convergence analysis of Algorithm~\ref{alg_ZCGD} as shown in the next result.

\begin{theorem} \label{theorem_CGD}
Let $\{z_k\}_{k \ge 0}$ be generated by Algorithm~\ref{alg_ZCGD} and Assumptions~\ref{unbiased_assum}, \ref{smooth_assum}, and \ref{bnd_grad} hold.
\begin{enumerate}
\item Let $f$ be nonconvex, bounded from below by $f^*$, and let the parameters of the algorithm be set as
\beq \label{def_alpha_m}
\nu = \sqrt{\frac{2B_{L\sigma}}{ N (d+3)^3}}, \ \ \alpha_k = \frac{1}{\sqrt{N}}, \ \ m_k = 2B_{L \sigma} (d+5)N,  \ \ \forall k \ge 1
\eeq
for some constant $B_{L \sigma} \ge \max\{\sqrt{B^2+\sigma^2}/L,1\}$ and a given iteration bound $N \ge 1$. Then we have
\beq \label{CGD_nocvx}
\E[g_{_\setpp}^R] \le  \frac{f(z_0) -f^*+ L D_\setpp^2 + 2 \sqrt{B^2+\sigma^2}}{\sqrt{N}},
\eeq
where $R$ is uniformly distributed over $\{1,\ldots,N\}$ and $g_k$ is defined in \eqnok{FWGap}. Hence, the total number of calls to the zeroth-order stochastic oracle and linear subproblems required to be solved to find an $\epsilon$-stationary point of problem \eqnok{eq:main_prob} are, respectively, bounded by
\beq \label{CGD_nocvx2}
{\cal O}\left(\frac{d}{\epsilon^4}\right), \ \ {\cal O}\left(\frac{1}{\epsilon^2}\right).
\eeq

\item Let $f$ be convex and let the parameters be set to
\beq \label{def_alpha_m_cvx}
\nu = \sqrt{\frac{2B_{L\sigma}}{ N^2 (d+3)^3}}, \ \ \alpha_k = \frac{6}{k+5}, \ \ m_k =2B_{L \sigma} (d+5)N^2,  \ \ \forall k \ge 1.
\eeq
Then we have
\beq \label{CGD_cvx}
\E[f(z_N)] - f^* + \E[g_{_\setpp}^R] \leq \frac{120[f(z_0) - f(x_*)]}{(N+3)^3} + \frac{36L D^2_\setpp}{N+5} +
\frac{ \sqrt{B^2+\sigma^2} }{N}
\eeq
where $R$ is random variable from $\{1,\ldots,N\}$ whose probability distribution is given by
\beq \label{def_Gamma}
P_R (R=k) = \frac{\alpha_k \Gamma_N}{2 \Gamma_N (1-\Gamma_N)},
\quad\qquad
\Gamma_k = \prod_{i=1}^k\left( 1 - \frac{\alpha_i}{2} \right), \ \ \Gamma_0 =1.
\eeq
Hence, the total number of calls to the zeroth-order stochastic oracle and linear subproblems required to be solved to find and $\epsilon$-optimal solution of problem \eqnok{eq:main_prob} are, respectively, bounded by
\beq \label{CGD_cvx2}
{\cal O}\left(\frac{d}{\epsilon^3}\right), \ \ {\cal O}\left(\frac{1}{\epsilon}\right).
\eeq

\end{enumerate}
\end{theorem}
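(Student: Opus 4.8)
The plan is to run a conditional-gradient descent analysis in which the only non-standard ingredient is the control of the gradient-estimation error $e_k := \|\bar{G}_\nu^k - \nabla f(z_{k-1})\|$; everything else is driven by $L$-smoothness and the optimality of the linear subproblem. First I would derive a one-step descent inequality. Applying the smoothness bound \eqnok{fgrad_smooth} to $z_k = (1-\alpha_k)z_{k-1}+\alpha_k \pp_k$ and using $\|\pp_k - z_{k-1}\| \le D_\setpp$ from Assumption~\ref{bnd_grad} gives
\[
f(z_k) \le f(z_{k-1}) + \alpha_k \langle \nabla f(z_{k-1}), \pp_k - z_{k-1}\rangle + \tfrac{L\alpha_k^2 D_\setpp^2}{2}.
\]
Since $\pp_k$ minimizes $\langle \bar{G}_\nu^k, \cdot\rangle$ over $\setpp$, we have $\langle \bar{G}_\nu^k, \pp_k\rangle \le \langle \bar{G}_\nu^k, \hat{\pp}_k\rangle$ for the true Frank--Wolfe atom $\hat{\pp}_k$ of \eqnok{FWGap}; substituting $\nabla f(z_{k-1}) = \bar{G}_\nu^k + (\nabla f(z_{k-1}) - \bar{G}_\nu^k)$ in the inner product and applying Cauchy--Schwarz to the two resulting error terms (each an inner product of $\nabla f(z_{k-1})-\bar{G}_\nu^k$ with a vector of norm $\le D_\setpp$) converts the middle term into $-\alpha_k g_{\setpp}^k + 2\alpha_k D_\setpp e_k$, so that $f(z_k) \le f(z_{k-1}) - \alpha_k g_{\setpp}^k + 2\alpha_k D_\setpp e_k + \tfrac{L\alpha_k^2 D_\setpp^2}{2}$.

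Next I would bound $\E[e_k]$ by a bias-plus-variance estimate. By the triangle inequality $e_k \le \|\nabla f_\nu(z_{k-1}) - \nabla f(z_{k-1})\| + \|\bar{G}_\nu^k - \nabla f_\nu(z_{k-1})\|$, where the first (smoothing-bias) term is at most $\tfrac{\nu}{2}L(d+3)^{3/2}$ by \eqnok{rand_smth_close_grad}. For the zero-mean second term I would use Jensen's inequality, the variance-reduction bound \eqnok{var0}, and the single-sample second-moment estimate obtained by applying \eqnok{stoch_smth_approx_grad} to $F(\cdot,\dd)$ and then taking $\E_\dd$, using $\E_\dd\|\nabla F(z_{k-1},\dd)\|^2 \le \|\nabla f(z_{k-1})\|^2 + \sigma^2 \le B^2+\sigma^2$ from Assumptions~\ref{unbiased_assum}--\ref{bnd_grad}. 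This yields
\[
\E[e_k] \le \tfrac{\nu}{2}L(d+3)^{3/2} + \sqrt{\tfrac{1}{m_k}\!\left(\tfrac{\nu^2}{2}L^2(d+6)^3 + 2(d+4)(B^2+\sigma^2)\right)}.
\]

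For the nonconvex part I would take expectations, sum the descent inequality over $k=1,\dots,N$ with the constant step $\alpha_k = 1/\sqrt N$, telescope using $f \ge f^*$, and substitute $\sum_k \E[g_{\setpp}^k] = N\,\E[g_{\setpp}^R]$ for uniform $R$. With the prescribed $\nu$ and $m_k = 2B_{L\sigma}(d+5)N$, both the smoothing bias and the variance root are $O(1/\sqrt N)$ (the constraint $B_{L\sigma}\ge\sqrt{B^2+\sigma^2}/L$ is exactly what makes the two error contributions comparable), while $\sum_k \tfrac{L\alpha_k^2 D_\setpp^2}{2} = \tfrac{L D_\setpp^2}{2}$; dividing through by $\sqrt N$ produces \eqnok{CGD_nocvx}. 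Counting then gives $N = O(1/\epsilon^2)$ linear subproblems and $\sum_k m_k = O(dN^2) = O(d/\epsilon^4)$ oracle calls, i.e. \eqnok{CGD_nocvx2}.

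The convex part uses a weighted telescoping. Subtracting $f^*$ and applying $g_{\setpp}^k \ge f(z_{k-1})-f^*$ (from \eqnok{FWGap_cvx}) to half of the gap term yields the contraction $\delta_k \le (1-\tfrac{\alpha_k}{2})\delta_{k-1} - \tfrac{\alpha_k}{2}g_{\setpp}^k + 2\alpha_k D_\setpp e_k + \tfrac{L\alpha_k^2 D_\setpp^2}{2}$ with $\delta_k := f(z_k)-f^*$. Dividing by $\Gamma_k$ and telescoping, then using the identity $\sum_{k=1}^N \tfrac{\alpha_k/2}{\Gamma_k} = \Gamma_N^{-1}-1$ and the closed form $\Gamma_k = 60/[(k+3)(k+4)(k+5)]$ for $\alpha_k=6/(k+5)$, together with the sampling law $P_R$ of \eqnok{def_Gamma}, isolates $\E[\delta_N] + (1-\Gamma_N)\E[g_{\setpp}^R]$ on the left. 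I expect the main obstacle throughout to be the constant bookkeeping of the three weighted sums $\Gamma_N\delta_0$, $\Gamma_N\sum_k \tfrac{\alpha_k^2}{\Gamma_k}$ and $\Gamma_N\sum_k \tfrac{\alpha_k}{\Gamma_k}\E[e_k]$; under the sharper scaling $\nu^2,\,m_k^{-1} = O(1/N^2)$ one gets $\E[e_k]=O(1/N)$, and these three sums reproduce the three terms of \eqnok{CGD_cvx}. Complexity then follows from $N=O(1/\epsilon)$ and $\sum_k m_k = O(dN^3)=O(d/\epsilon^3)$, giving \eqnok{CGD_cvx2}.
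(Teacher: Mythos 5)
Your proposal is correct and its overall architecture (one-step descent from smoothness plus the linear-subproblem optimality, mini-batch variance reduction, uniform averaging in the nonconvex case and $\Gamma_k$-weighted telescoping with \eqnok{FWGap_cvx} in the convex case) coincides with the paper's. The one genuine point of divergence is how the estimation error $\Delta_k=\bar{G}_\nu^k-\nabla f(z_{k-1})$ is absorbed: you apply Cauchy--Schwarz against the diameter, producing a first-moment term $2\alpha_k D_\setpp\,\E\|\Delta_k\|$ that you then control by Jensen as bias plus root-variance, whereas the paper applies Young's inequality to $\alpha_k\langle\Delta_k,\hat{\pp}_k-\pp_k\rangle$, producing the term $\|\Delta_k\|^2/(2L)$ (not scaled by $\alpha_k$) and controlling it through the second-moment bound of Lemma~\ref{bargrad_var}. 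Both routes give the same $O(1/\sqrt{N})$ and $O(1/N)$ rates and the identical complexity counts \eqnok{CGD_nocvx2} and \eqnok{CGD_cvx2}; the trade-off is that the paper's version keeps the diameter out of the statistical error and lands exactly on the constants displayed in \eqnok{CGD_nocvx} and \eqnok{CGD_cvx}, while yours is arguably more transparent (explicit bias-plus-standard-deviation decomposition) but yields constants of the form $D_\setpp\sqrt{L}\,(B^2+\sigma^2)^{1/4}$ and $LD_\setpp\sqrt{B_{L\sigma}}$ that require an extra AM--GM step to be folded into the stated $LD_\setpp^2+\sqrt{B^2+\sigma^2}$ combination, so the numerical constants you obtain will differ slightly from the theorem's.
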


In order to prove Theorem~\ref{theorem_CGD}, we need the following result that provides upper bounds for the variance of our gradient estimator.
\begin{lemma}\label{bargrad_var}
Let $\bar{G}_\nu^k$ be computed by \eqnok{def_bargradk}. Then under Assumptions~\ref{unbiased_assum}, \ref{smooth_assum} and \ref{bnd_grad}, we have
\beqa
\E [\|\bar{G}_\nu^k - \nabla f_\nu (z_{k-1})\|^2] &\le& \frac{2(d+5) (B^2+\sigma^2)}{m_k}+ \frac{\nu^2}{2m_k} L^2 (d+3)^3, \label{var1}\\
\E [\|\bar{G}_\nu^k - \nabla f (z_{k-1})\|^2] &\le& \frac{4(d+5) (B^2+\sigma^2)}{m_k}  + \frac{3\nu^2}{2} L^2 (d+3)^3.\label{var2}
\eeqa
\end{lemma}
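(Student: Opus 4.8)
The lemma contains two estimates, and \eqnok{var2} will be a short consequence of \eqnok{var1} together with the smoothing-bias bound \eqnok{rand_smth_close_grad}; accordingly, the plan is to prove \eqnok{var1} first and then deduce \eqnok{var2}. To establish \eqnok{var1} I would start from the variance-reduction inequality \eqnok{var0}, which reduces the claim to a bound on a single estimator $G_\nu^{k,j}$: since subtracting the mean only decreases the second moment, $\E[\|G_\nu^{k,j}-\nabla f_\nu(z_{k-1})\|^2]\le \E[\|G_\nu^{k,j}\|^2]$, so it is enough to bound $\E[\|G_\nu^{k,j}\|^2]$ and then divide by $m_k$.

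For the single-sample second moment I would expand the finite difference using the almost-sure smoothness of $F(\cdot,\dd)$ from Assumption~\ref{smooth_assum}: writing $F(z_{k-1}+\nu u,\dd)-F(z_{k-1},\dd)=\nu\langle\nabla F(z_{k-1},\dd),u\rangle+r$ with $|r|\le\tfrac{L}{2}\nu^2\|u\|^2$, and applying $(a+b)^2\le 2a^2+2b^2$, gives $\|G_\nu^{k,j}\|^2\le 2\langle\nabla F(z_{k-1},\dd),u\rangle^2\|u\|^2+\tfrac{\nu^2}{2}L^2\|u\|^6$. Taking expectation over the Gaussian direction $u$ and using the exact moment identities $\E[\langle a,u\rangle^2\|u\|^2]=(d+2)\|a\|^2$ and $\E[\|u\|^6]=d(d+2)(d+4)\le(d+3)^3$ handles the two terms; finally taking expectation over $\dd$ and invoking Assumptions~\ref{unbiased_assum} and \ref{bnd_grad} to bound $\E_\dd[\|\nabla F(z_{k-1},\dd)\|^2]=\|\nabla f(z_{k-1})\|^2+\E_\dd[\|\nabla F-\nabla f\|^2]\le B^2+\sigma^2$ yields $\E[\|G_\nu^{k,j}\|^2]\le 2(d+2)(B^2+\sigma^2)+\tfrac{\nu^2}{2}L^2(d+3)^3$, which is subsumed by \eqnok{var1} since $d+2\le d+5$. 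This step is essentially the stochastic analogue of part~c) of Theorem~\ref{smth_approx} applied to $F(\cdot,\dd)$ and then averaged over $\dd$.

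To deduce \eqnok{var2} I would split $\bar G_\nu^k-\nabla f(z_{k-1})=(\bar G_\nu^k-\nabla f_\nu(z_{k-1}))+(\nabla f_\nu(z_{k-1})-\nabla f(z_{k-1}))$ and again use $\|a+b\|^2\le2\|a\|^2+2\|b\|^2$. The first piece is controlled by \eqnok{var1}, contributing $\tfrac{4(d+5)(B^2+\sigma^2)}{m_k}+\tfrac{\nu^2}{m_k}L^2(d+3)^3$, while the deterministic bias piece is controlled by \eqnok{rand_smth_close_grad}, which gives $2\|\nabla f_\nu(z_{k-1})-\nabla f(z_{k-1})\|^2\le\tfrac{\nu^2}{2}L^2(d+3)^3$. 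Since $m_k\ge1$, the two $\nu^2$-terms combine to at most $\tfrac{3\nu^2}{2}L^2(d+3)^3$, producing exactly \eqnok{var2}.

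The overall structure is routine, so I expect the only delicate point to be matching the stated dimension dependence. The crude Gaussian moment bound \eqnok{eq:l2gauss} would give $\E[\|u\|^6]\le(d+6)^3$ and hence the weaker factor $(d+6)^3$; obtaining the sharper $(d+3)^3$ requires retaining the exact sixth moment $d(d+2)(d+4)$ and bounding it by $(d+3)^3$ (via $d(d+2)(d+4)\le(d+2)^3$ by AM--GM). Thus the main care lies in the constant bookkeeping — tracking the exact Gaussian moments and ensuring the oracle variance $\sigma^2$ and the gradient bound $B^2$ enter additively through $\E_\dd[\|\nabla F\|^2]$ — rather than in any conceptual obstacle.
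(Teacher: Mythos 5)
Your proposal is correct and follows essentially the same route as the paper: reduce to a single-sample second moment via \eqnok{var0}, bound $\E[\|G_\nu^{k,j}\|^2]$ using the almost-sure smoothness of $F(\cdot,\dd)$ together with Assumptions~\ref{unbiased_assum} and \ref{bnd_grad}, and obtain \eqnok{var2} from \eqnok{var1} plus the bias bound \eqnok{rand_smth_close_grad} via $\|a+b\|^2\le 2\|a\|^2+2\|b\|^2$. The one point where you are actually more careful than the paper is the single-sample moment: the paper invokes \eqnok{stoch_smth_approx_grad} for $F$, which literally yields a $(d+6)^3$ factor in the $\nu^2$-term rather than the $(d+3)^3$ stated in \eqnok{var1}, whereas your direct computation with the exact Gaussian moments $\E[\langle a,u\rangle^2\|u\|^2]=(d+2)\|a\|^2$ and $\E[\|u\|^6]=d(d+2)(d+4)\le(d+3)^3$ delivers the sharper constant as stated.
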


\begin{proof}[Proof of Lemma~\ref{bargrad_var}]
First note that using \eqnok{stoch_smth_approx_grad} for function $F$ instead of $f$, under Assumptions~\ref{unbiased_assum} and \ref{smooth_assum}, we obtain
\begin{align*}
\E [\|G_\nu^{k,j}\|^2] &\leq   \tfrac{\nu^2 L^2}{2} (d+6)^3+ 2\left[\| \nabla f(z_{k-1}) \|^2+\sigma^2\right] (d+4)
\end{align*}
Also noting \eqnok{def_gradk}, \eqnok{var0}, and the fact that $\|\nabla f_\nu\| \le B$ under Assumption~\ref{bnd_grad}, we have
\[
\E [\|\bar{G}_\nu^k - \nabla f_\nu (z_{k-1})\|^2] \le \frac{1}{m_k} \left(\E [\|G_\nu^{k,j}\|^2] + B^2\right),
\]
which together with the above relation clearly imply \eqnok{var1}.
We can then obtain \eqnok{var2} by noting \eqnok{rand_smth_close_grad} and the fact that
\[
\E [\|\bar{G}_\nu^k - \nabla f (z_{k-1})\|^2] \le
2\E [\|\bar{G}_\nu^k - \nabla f_\nu (z_{k-1})\|^2] + 2\E [\|\nabla f_\nu (z_{k-1}) - \nabla f(z_{k-1})\|^2].
\]
\end{proof}

\begin{proof}[Proof of Theorem~\ref{theorem_CGD}]
Denoting $\Delta_{k} = \bar{G}_\nu^k - \nabla f(z_{k-1})$, noting \eqnok{fgrad_smooth}, \eqnok{def_zk_GD}, and \eqnok{FWGap}, we have
\begin{align}
f(z_k) & \leq f(z_{k-1}) + \langle \nabla f (z_{k-1}) , z_k - z_{k-1}\rangle + \frac{L}{2} \| z_k - z_{k-1}\|^2 \nn \\
& =f(z_{k-1}) + \alpha_k \langle \nabla f (z_{k-1}) , \pp_k - z_{k-1}\rangle + \frac{L\alpha_k^2}{2} \| \pp_k - z_{k-1}\|^2 \nn \\
& \leq f(z_{k-1})+\alpha_k \langle \nabla f (z_{k-1}) , \hat{\pp}_k - z_{k-1}\rangle + \frac{L \alpha_k^2}{2}\left[ \| \pp_k - z_{k-1}\|^2 + \| \pp_k - \hat{\pp}_k \|^2\right] + \frac{\| \Delta_k\|^2}{2L} \nn \\
& \le f(z_{k-1}) - \alpha_k g_{_\setpp}^k + L D_\setpp^2 \alpha_k^2+ \frac{\| \Delta_k\|^2}{2L}, \label{CGD_proof1}
\end{align}
where the last inequality follows from boundedness of the feasible set, \eqnok{FWGap}, and the fact that
\[
\langle \nabla f(z_{k-1}) + \Delta_k, \pp_k - u \rangle \leq  0\ \ \forall u \in \setpp
\]
due to the optimality condition of \eqnok{def_xk_GD}. Taking expectation from both sides of the above inequality, summing them up, rearranging the terms, and noting Lemma~\ref{bargrad_var}, we obtain
\[
\sum_{k=1}^N \alpha_k \E[g_{_\setpp}^k] \le f(z_0) - f^* + L D_\setpp^2 \sum_{k=1}^N \alpha_k^2+ \frac{\nu^2}{2} LN (d+3)^3 + \frac{2(d+5) (B^2+\sigma^2)}{L}  \sum_{k=1}^N \frac{1}{m_k}.
\]
Hence, choosing $\alpha_k =\alpha_1$ and $m_k = m_1$ for all $k \ge 1$, and noting that $R$ is a uniform random variable, we have
\begin{align*}
\E[g_{_\setpp}^R] = \frac{\sum_{k=1}^N \E[g_{_\setpp}^k]}{N} = \frac{\sum_{k=1}^N \alpha_k \E[g_{_\setpp}^k]}{\sum_{k=1}^N \alpha_k}
&\leq \frac{f(z_0) -f^*}{N \alpha_1} + L D_\setpp^2 \alpha_1 + \frac{\nu^2}{2\alpha_1} L (d+3)^3 \nn \\
&+ \frac{2(d+5) (B^2+\sigma^2)}{L \alpha_1 m_1},
\end{align*}
which together with \eqnok{def_alpha_m} imply \eqnok{CGD_nocvx}. Hence, \eqnok{CGD_nocvx2} follows by noting that the total number of calls to the stochastic oracle is bounded by $\sum_{k=1}^N m_k$.

Now assume that $f$ is convex. Hence, by \eqnok{FWGap_cvx} and \eqnok{CGD_proof1}, we have
\[
f(z_k)- f(x_*) \le ( 1 - \tfrac{\alpha_k}{2})(f(z_{k-1} - f(x_*)) - \frac{\alpha_k g_{_\setpp}^k}{2} + L D_\setpp^2 \alpha_k^2+ \frac{\| \Delta_k\|^2}{2L}
\]

Taking expectation from both sides of the above inequality, dividing them by $T_k$, and summing them up, and noting \eqnok{def_Gamma}, we obtain
\begin{align*}
\frac{\E[f(z_N)] - f^*}{\Gamma_N} + \sum_{k=1}^N \frac{\alpha_k \E[g_{_\setpp}^k]}{2\Gamma_k}  \leq f(z_0) - f^* + L D^2_\setpp \sum_{k=1}^N \frac{\alpha_k^2}{\Gamma_k} + \frac{1}{2L} \sum_{k=1}^N \frac{\E \left[ \| \Delta_k \|^2\right]}{\Gamma_k},
\end{align*}
which together with the fact that
\[
\sum_{k=1}^N \frac{\alpha_k}{2\Gamma_k} = \frac{1-\Gamma_N}{\Gamma_N}, \ \ 1- \Gamma_1 \le 1- \Gamma_N \le 1
\]
due to \eqnok{def_Gamma}, imply that
\begin{align}
\E[f(z_N)] - f^* + \E[g_{_\setpp}^R]&\leq \frac{\Gamma_N}{ 1- \Gamma_N} \left[f(z_0) - f^* + L D^2_\setpp \sum_{k=1}^N \frac{\alpha_k^2}{\Gamma_k} +
\frac{2(d+5) (B^2+\sigma^2)}{L} \sum_{k=1}^N \frac{1}{\Gamma_k m_k} \right. \nn\\
& \qquad \qquad \qquad + \left. \frac{\nu^2}{2} L (d+3)^3 \sum_{k=1}^N \frac{1}{\Gamma_k}\right]
\end{align}
Now noting \eqnok{def_alpha_m_cvx} and \eqnok{def_Gamma}, we have
\begin{align}
&\Gamma_k = \frac{60}{(k+3)(k+4)(k+5)}, \qquad \qquad \sum_{k=1}^N \frac{\alpha_k^2}{\Gamma_k} \le \sum_{k=1}^N \frac{3(k+3)}{5} = \frac{3N(N+7)}{10}, \nn\\
&\Gamma_N \sum_{k=1}^N \frac{1}{\Gamma_k m_k} \le \frac{1}{4(d+5) B_{L \sigma} N}, \qquad \qquad \Gamma_N \sum_{k=1}^N \frac{1}{\Gamma_k} \le N.\nn
\end{align}
Combining the above relations, we get \eqnok{CGD_cvx} and \eqnok{CGD_cvx2}.
\end{proof}

\begin{remark}
Observe that the complexity bounds in \eqnok{CGD_nocvx2}, in terms of $\epsilon$, match the ones obtained in \cite{Ghadimi2018, ReSrPoSm16, mokhtari2018stochastic} for stochastic CG method with first-order oracle applied to  nonconvex problems. For convex problems, similar observation can be made for terms in \eqnok{CGD_cvx2} which match the ones in \cite{hazan2016variance, Ghadimi2018}. Note that the linear dependence of our complexity bounds on $d$ is unimprovable due to the lower bounds for zeorth-order algorithms applied to convex optimization problems \cite{duchi2015optimal}. We conjecture that this is also the case for nonconvex problems.
\end{remark}

\subsection{Improved Rates for Convex Problems}\label{sec:improvedcgd}
Our goal in this subsection  is to improve the complexity bounds of the ZCSG method when $f$ is convex. Recall that the ZSCG method presented in Section~\ref{sec:zscg} involves two main steps: the gradient evaluation step and the linear optimization step. Motivated by~\cite{lan2016conditional}, we now propose a modified algorithm that allows one to skip the gradient evaluation from time to time. Notice that, as our gradients are estimated by calling the zeroth-order oracle, this directly reduces the number of calls to the zeroth-order oracle. We first state a subroutine in Algorithm~\ref{alg_ZCGDsubroutine} used in our modified algorithm.
\begin{algorithm} [t]
	\caption{Inexact Conditional Gradient (ICG) method}
	\label{alg_ZCGDsubroutine}
	\begin{algorithmic}

\State Input: $(x, g, \gamma, \mu)$.
\State Set $\bar{y}_0 =x$, $t=1$, and $\kappa=0$..
\While {$\kappa =0$} 
\beq \label{def_gradk2}
y_t = \underset{u \in \setpp}{\argmin} \left\{  h_{\gamma}(u) :=\langle g + \gamma(\bar{y}_{t-1}- \pp), u - \bar{y}_{t-1} \rangle \right \}
\eeq
\State  If $h_{\gamma}(y_t) \geq -\mu$, set $\kappa=1$.
\State Else $\bar{y}_t  = \tfrac{t-1}{t+1} \bar{y}_{t-1} + \frac{2}{t+1} y_t $ and $t=t+1$.
\EndWhile
\State Output $\bar{y}_t$.
\end{algorithmic}
\end{algorithm}
Note that Algorithm~\ref{alg_ZCGDsubroutine} is indeed the zeroth-order conditional gradient method for inexactly solving the following quadratic program
\beq \label{qd_subproblem}
P_{\setpp}(x,g,\gamma)= \underset{u \in \setpp}{\argmin} \left\{ \langle g,u \rangle + \frac{\gamma}{2} \| u -\pp\|^2 \right\},
\eeq
which is the standard subproblem of stochastic first-order methods applied to a minimization problem when $g$ is an unbiased stochastic gradient of the objective function at $x$. We now present Algorithm~\ref{alg_ZCGDSC2} which applies the CG method to inexactly solve subproblems of the stochastic accelerated gradient method. This way of using CG methods can significantly improve the total number of calls to the stochastic oracle. Our next result provides convergence analysis of this algorithm.


\begin{algorithm} [H]
	\caption{Zeroth-order Stochastic Accelerated Gradient Method with Inexact Updates}
	\label{alg_ZCGDSC2}
	\begin{algorithmic}

\State Input:$z_0=x_0 \in \setpp$, smoothing parameter $\nu>0$, sequences $\alpha_k$, $m_k$, $\gamma_k$, $\mu_k$, and iteration limit $N\geq 1$.
\For {$k = 1, \ldots, N$ }
\State 1. Set \beq \label{def_w}
w_k = (1-\alpha_k) z_{k-1}+\alpha_k x_{k-1}
\eeq

\State 2. Generate $u_k=[u_{k,1},\ldots, u_{k,m_k}]$, where $u_{k,j} \sim N(0,I_d)$, call the stochastic oracle $m_k$ times to compute $\bar{G}_\nu^k \equiv \bar{G}_\nu(w_k, \dd_k, u_k)$ as given by \eqnok{def_bargradk}, and set 
\beq \label{CGS_update}
\pp_k = ICG(x_{k-1}, \bar{G}_\nu^k, \gamma_k, \mu_k),
\eeq
where $ICG(\cdot)$ is the output of Algorithm~\ref{alg_ZCGDsubroutine} with input $(x_{k-1}, \bar{G}_\nu^k, \gamma_k)$.
\State 3. Set
\beq \label{def_zk_av}
z_k  = (1- \alpha_k) z_{k-1} + \alpha_k \pp_k
\eeq
\EndFor
\State Output: $z_N$
\end{algorithmic}
\end{algorithm}

\begin{theorem}\label{theom_ZCGDSC_cvx}
Let $\{z_k\}_{k \geq 1}$ be generated by Algorithm~\ref{alg_ZCGDSC2}, the function $f$ be convex, and
\beqa \label{def_alpha_m_mu_cvx}
\alpha_k &=& \frac{2}{k+1}, \ \ \gamma_k = \frac{4L}{k}, \ \ \mu_k = \frac{L D_X^0}{k N}, \ \ \nu = \frac{1}{\sqrt{2N}} \max\left\{\frac{1}{d+3}, \sqrt{\frac{D_X^0}{d(N+1)}} \right\}\nn \\
m_k &=& \frac{k(k+1)}{D_X^0} \max\left\{(d+5)B_{L \sigma}N, d+3 \right\},  \ \ \forall k \ge 1,
\eeqa
and for some constants $D_X^0 \ge \|x_0-x_*\|^2$ and $B_{L \sigma} \ge \max\{\sqrt{B^2+\sigma^2}/L,1\}$. Then under Assumptions~\ref{unbiased_assum}, \ref{smooth_assum}, and \ref{bnd_grad}, we have
\beq \label{SCGD_cvx}
\E[f(z_N)-f(x_*)] \le  \frac{12 L D_X^0}{N (N+1)}.
\eeq
Hence, the total number of calls to the stochastic oracle and linear subproblems solved to find and $\epsilon$-stationary point of problem \eqnok{eq:main_prob} are, respectively, bounded by
\beq \label{SCGD_cvx2}
{\cal O}\left(\frac{d}{\epsilon^2}\right), \ \ {\cal O}\left(\frac{1}{\epsilon}\right).
\eeq
\end{theorem}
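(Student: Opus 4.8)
The plan is to mirror the analysis of the conditional gradient sliding method of \cite{lan2016conditional}, but carried out for the Gaussian--smoothed surrogate $f_\nu$ rather than $f$ itself. The reason to work with $f_\nu$ is that $\bar G_\nu^k$ is an \emph{unbiased} estimator of $\nabla f_\nu(w_k)$, so that $\delta_k := \bar G_\nu^k - \nabla f_\nu(w_k)$ satisfies $\E[\delta_k \mid \mathcal F_{k-1}]=0$; this keeps the smoothing error out of the recursion entirely, to be reinserted only at the very end through $|f_\nu(x)-f(x)|\le \tfrac{\nu^2}{2}Ld$ from \eqnok{rand_smth_close}. If one instead ran the recursion on $f$, the bias $\nabla f_\nu-\nabla f$ would enter linearly as $\langle \nabla f_\nu(w_k)-\nabla f(w_k),x_{k-1}-x_*\rangle$ and, summed against the iteration weights, would scale like $D_\setpp$ times $N^2$, destroying the $O(1/N^2)$ rate. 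Note $f_\nu$ is convex (smoothing preserves convexity) and $L_\nu\le L$-smooth by Theorem~\ref{smth_approx}a).

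First I would record the inexact optimality condition produced by the ICG subroutine. Since Algorithm~\ref{alg_ZCGDsubroutine} runs Frank--Wolfe on the $\gamma_k$-strongly convex quadratic \eqnok{qd_subproblem} and stops once the Frank--Wolfe gap satisfies $h_{\gamma_k}(y_t)\ge-\mu_k$, its output $x_k=ICG(x_{k-1},\bar G_\nu^k,\gamma_k,\mu_k)$ obeys $\langle \bar G_\nu^k+\gamma_k(x_k-x_{k-1}),\,x_k-u\rangle\le\mu_k$ for every $u\in\setpp$. Next I would establish the basic one-step inequality: using $L_\nu\le L$-smoothness of $f_\nu$ together with \eqnok{def_w} and \eqnok{def_zk_av}, and then convexity of $f_\nu$ evaluated at $(1-\alpha_k)z_{k-1}+\alpha_k x_*$, one obtains
\[
f_\nu(z_k)\le (1-\alpha_k)f_\nu(z_{k-1})+\alpha_k f_\nu(x_*)+\alpha_k\langle\nabla f_\nu(w_k),x_k-x_*\rangle+\tfrac{L\alpha_k^2}{2}\|x_k-x_{k-1}\|^2 .
\]
Substituting $\nabla f_\nu(w_k)=\bar G_\nu^k-\delta_k$, invoking the inexact optimality condition with $u=x_*$, and expanding $\langle x_k-x_{k-1},x_k-x_*\rangle$ by the three-point identity converts the inner product into $\mu_k$, a telescoping pair $\tfrac{\gamma_k}{2}(\|x_{k-1}-x_*\|^2-\|x_k-x_*\|^2)$, a $-\tfrac{\gamma_k}{2}\|x_k-x_{k-1}\|^2$ term, and the stochastic term $-\langle\delta_k,x_k-x_*\rangle$.

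The stochastic term is the crux. I would split $-\langle\delta_k,x_k-x_*\rangle=-\langle\delta_k,x_{k-1}-x_*\rangle-\langle\delta_k,x_k-x_{k-1}\rangle$; the first piece has an $\mathcal F_{k-1}$-measurable second argument and hence vanishes in expectation, while the second is bounded by Young's inequality. The key is to choose the Young parameter so that the induced multiple of $\|x_k-x_{k-1}\|^2$ is absorbed by $-\tfrac{\alpha_k\gamma_k}{2}\|x_k-x_{k-1}\|^2$ net of the smoothness term $\tfrac{L\alpha_k^2}{2}\|x_k-x_{k-1}\|^2$; this is feasible precisely because the schedule in \eqnok{def_alpha_m_mu_cvx} gives $\gamma_k>L\alpha_k$, leaving only a benign multiple of $\|\delta_k\|^2$. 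Dividing by $\Gamma_k=\tfrac{2}{k(k+1)}$ (so that $1-\alpha_k=\Gamma_k/\Gamma_{k-1}$ and $\tfrac{\alpha_k\gamma_k}{2\Gamma_k}=2L$ is constant), summing over $k$, and taking total expectation telescopes the $\|x_\cdot-x_*\|^2$ terms and annihilates the zero-mean parts, yielding
\[
\E[f_\nu(z_N)]-f_\nu(x_*)\le \Gamma_N\Big(2L\|x_0-x_*\|^2+\sum_{k=1}^N\tfrac{\alpha_k\mu_k}{\Gamma_k}+\sum_{k=1}^N c_k\,\E\|\delta_k\|^2\Big).
\]

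Finally I would plug in the schedules from \eqnok{def_alpha_m_mu_cvx}: the first term is $\le 4LD_X^0/(N(N+1))$; since $\alpha_k/\Gamma_k=k$ and $\mu_k=LD_X^0/(kN)$, the $\mu_k$-sum collapses to $LD_X^0$; and, bounding $\E\|\delta_k\|^2$ by \eqnok{var1} (Lemma~\ref{bargrad_var}), the variance sum is forced to $O(LD_X^0/(N(N+1)))$ by the choice $m_k\propto k(k+1)(d+5)B_{L\sigma}N/D_X^0$, whose $k(k+1)$ factor exactly cancels the $\Theta(k^2)$ weight $c_k\sim k^2/L$. Converting back with $|f_\nu-f|\le\tfrac{\nu^2}{2}Ld$ applied at $z_N$ and at $x_*$ adds $\nu^2 Ld$, which the choice of $\nu$ (its two branches handling the regimes of $D_X^0$ versus $d,N$) keeps below $LD_X^0/(N(N+1))$; collecting constants gives \eqnok{SCGD_cvx}. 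The complexity bound \eqnok{SCGD_cvx2} then follows from $N=O(\epsilon^{-1/2})$ together with $\sum_k m_k=O(dN^4/D_X^0)=O(d/\epsilon^2)$, while the linear-subproblem count follows by bounding the inner Frank--Wolfe iterations of each ICG call by $O(\gamma_k D_\setpp^2/\mu_k)$ and summing to $O(N^2)=O(1/\epsilon)$. The main obstacle is exactly this joint calibration: extracting a zero-mean (martingale) structure from $-\langle\delta_k,x_k-x_*\rangle$ despite $x_k$ depending on $\delta_k$, and simultaneously tuning $m_k$ and $\nu$ so that the variance and the smoothing bias both decay at the accelerated $1/N^2$ rate.
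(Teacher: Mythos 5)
Your proposal is correct and follows essentially the same route as the paper's proof: running the accelerated recursion on the smoothed surrogate $f_\nu$, using the inexact ICG optimality condition at $u=x_*$, splitting the error term $\langle \bar G_\nu^k-\nabla f_\nu(w_k),\,x_*-x_k\rangle$ into a conditionally zero-mean piece against $x_*-x_{k-1}$ plus a Young-absorbed piece, telescoping with the weights $\hat\Gamma_k=2/(k(k+1))$, and reinserting the $\nu^2Ld$ smoothing bias at the end. The calibration of $m_k$, $\mu_k$, $\nu$ and the linear-subproblem count via the inner Frank--Wolfe gap bound all match the paper's argument.
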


\begin{proof}
First, note that by \eqnok{fgrad_smooth}, we have
\beqa
f_\nu(z_k) &\leq& f_\nu(w_k)  + \langle \nabla f_\nu(w_k), z_k-w_k\rangle + \frac{L}{2} \|z_k - w_k \|^2 \nn \\
&\le& (1-\alpha_k) f_\nu(z_{k-1})  + \alpha_k \left[f_\nu(w_k)+\langle \nabla f_\nu(w_k), \pp_k-w_k\rangle \right] \nn \\ 
&+ & \frac{L \alpha_k^2}{2} \|\pp_k - \pp_{k-1} \|^2, \label{eq:cgdeq1}
\eeqa
where the second inequality follows from convexity of $f_\nu$, \eqnok{def_w}, and \eqnok{def_zk_av}. Also note that by \eqnok{def_gradk2} and \eqnok{CGS_update}, we have
\beq\label{eq:cgdeq2}
-\mu_k  \leq  \langle \bar{G}_\nu^k+ \gamma_k (\pp_k -\pp_{k-1}), u -\pp_k \rangle \qquad \forall u \in \setpp.
\eeq
Letting $u = \pp_*$ in the above inequality and multiplying it by $\alpha_k$, summing it up with \eqnok{eq:cgdeq1}, and denoting $\bar \Delta_{k} = \bar{G}_\nu^k - \nabla f_\nu(w_k)$, we obtain
\[
f_\nu(z_k) \le (1-\alpha_k) f_\nu(z_{k-1})  + \alpha_k f_\nu(x_*)+ \alpha_k \left[\mu_k+ \langle \bar \Delta_k+\gamma_k(\pp_k-\pp_{k-1}), \pp_*-\pp_k \rangle\right]+ \frac{L \alpha_k^2}{2} \|\pp_k - \pp_{k-1} \|^2,
\]
which together with the facts that
\begin{align*}
&\| \pp_{k-1} - \pp_* \|^2 = \| \pp_k - \pp_{k-1}\|^2 + \| \pp_k - \pp_*\|_2^2 + 2 \langle \pp_{k-1} - \pp_{k}, \pp_k - \pp_*\rangle, \\
&\alpha_k \langle \bar \Delta_k, \pp_*-\pp_k \rangle  \le \alpha_k \langle \bar \Delta_k, \pp_*-\pp_{k-1} \rangle + \frac{\|\bar \Delta_k\|^2}{2L}+\frac{L \alpha_k^2}{2}\|\pp_k - \pp_{k-1}\|^2,
\end{align*}
imply
\beqa
f_\nu(z_k) \le (1-\alpha_k) f_\nu(z_{k-1})  + \alpha_k f_\nu(x_*)&+& \alpha_k \left[\mu_k+\frac{2L\alpha_k-\gamma_k}{2}\|\pp_k - \pp_{k-1} \|^2+\langle \bar \Delta_k, \pp_*-\pp_{k-1} \rangle\right] \nn \\
&+& \frac{\alpha_k\gamma_k}{2}\left[\| \pp_{k-1} - \pp_* \|^2 - \| \pp_k - \pp_* \|^2\right]+\frac{\|\bar \Delta_k\|^2}{2L}.
\eeqa
Defining
\beq \label{def_Gamma2}
\hat \Gamma_k = \prod_{i=2}^k\left( 1 - \alpha_i \right), \ \ \hat \Gamma_1 =1,
\eeq
subtracting $f_\nu(x_*)$ from both sides of the above inequality, diving them by $\hat \Gamma_k$, taking expectation, summing them up, noting \eqnok{rand_smth_close}
assuming that $\alpha_1=1$, $\gamma_k \ge 2L \alpha_k$, and $\gamma_k \alpha_k/\hat \Gamma_k$ is constant for any $k \ge 1$, we obtain
\[
\frac{\E \left[ f_(z_N)\right] - f(x_*)-\nu^2 L d}{\hat \Gamma_N} \leq \frac{\gamma_1}{2}\| \pp_0 - \pp_* \|^2+\sum_{k=1}^N \frac{\alpha_k \mu_k}{\hat \Gamma_k} +\left[\frac{(d+5) (B^2+\sigma^2)}{L} +\frac{\nu^2 L(d+3)^3}{2}\right] \sum_{k=1}^N \frac{1}{m_k \hat \Gamma_k}.
\]
Now noticing that
\begin{align*}
&\hat \Gamma_k = \frac{2}{k(k+1)}, \qquad \frac{\alpha_k \gamma_k}{\hat \Gamma_k} = 4L, \qquad \frac{\alpha_k \mu_k}{\hat \Gamma_k} = \frac{L D_0^2}{N},\\
&\frac{1}{m_k \hat \Gamma_k} \le \frac{2 D_0^2}{\max\left\{(d+5)B_{L \sigma}N, d+3 \right\}}
\end{align*}
due to \eqnok{def_alpha_m_mu_cvx} and \eqnok{def_Gamma2}, we obtain \eqnok{SCGD_cvx}.

Furthermore, note that the function $h_\gamma$ defined in Algorithm~\ref{alg_ZCGDsubroutine} is indeed negative the FW-gap of the CG method applied to problem \eqnok{qd_subproblem}. From classical analysis of the CG method and similar to our result in Theorem~\ref{theorem_CGD}, one can show that the FW-gap is bounded by $L D_\setpp^2/T$ if the CG method runs for $T$ iteration. Since the gradient of the objective function in \eqnok{qd_subproblem} is Lipschitz continuous with constant $\gamma$, we have
\[
- h_{\gamma_k}(\bar y_{T_k}) \le \frac{\gamma_k D_\setpp^2}{T_k},
\]
which together with the choice of $\mu_k$ and $\gamma_k$ in \eqnok{def_alpha_m_mu_cvx}, imply that at iteration $k$ of Algorithm~\ref{alg_ZCGD}, we need to run Algorithm~\ref{alg_ZCGDsubroutine} for at most $T_k = 4 D_\setpp^2 N/D_0^2$ iterations. Therefore, the total number of iterations of Algorithm~\ref{alg_ZCGDsubroutine} to find an $\epsilon$-stationary point of problem \eqnok{eq:main_prob} is bounded by $\sum_{k=1}^N T_k \le 48L D_\setpp^2 /\epsilon^2$ due to \eqnok{SCGD_cvx2}.

\end{proof}

\begin{remark}
Observe that while the number of linear subproblems required to find an $\epsilon$-optimal solution of problem \eqnok{eq:main_prob} is the same for both Algorithms~\ref{alg_ZCGD}
 and \ref{alg_ZCGDSC2}, the number of calls to the stochastic zeroth-order oracle in Algorithm~\ref{alg_ZCGDSC2}
 is significantly smaller than that of Algorithm~\ref{alg_ZCGD}. It is also natural to ask if such an improvement is achievable when $f$ is nonconvex. This situation is more subtle and the answer depends on the performance measure used to measure the rate of convergence. Indeed, we can obtain improved complexity bounds for a different performance measure than the Frank-Wolfe gap with a modified algorithm. However, the complexity bounds are of the same order as \eqnok{CGD_nocvx2} in terms of the Frank-Wolfe gap for the modified algorithm. For the sake of completeness, we add this algorithm and its convergence analysis in in Section~\ref{sec:nonconveximproved}.
 \end{remark}

\subsection{Zeroth-order Stochastic Gradient Method with Inexact Updates-Nonconvex case}\label{sec:nonconveximproved}
In this section, we present a zeroth-order stochastic gradient method which applies the CG method to solve the subproblems. This algorithm shares the main idea of Algorithm~\ref{alg_ZCGDSC2}, but for nonconvex problems. We show while this algorithm enjoys better complexity bound than  Algorithm~\ref{alg_ZCGDSC2}, it possess the same one when the same performance measure is employed.
\begin{algorithm} [H]
	\caption{Zeroth-order Stochastic Gradient Method with Inexact Updates}
	\label{alg_ZCGDSC}
	\begin{algorithmic}

\State Input: $x_0 \in \setpp$, smoothing parameter $\nu>0$, positive integer sequence $m_k$, and sequences $\gamma_k$ and $\mu_k$ and a probability distribution $P_R(\cdot)$ over $\{0,\ldots,N-1\}$
\For {$k = 1, \ldots, N$} 
\State Generate $u_k=[u_{k,1},\ldots, u_{k,m_k}]$, where $u_{k,j} \sim N(0,I_d)$, call the stochastic oracle $m_k$ times, compute $\bar{G}_\nu^k \equiv \bar{G}_\nu(\pp_{k-1}, \dd_k, u_k)$ as given by \eqnok{def_bargradk}, and set $\pp_k$ to \eqnok{CGS_update}.
\EndFor
\State Output: Generate $R$ according to $P_R(\cdot)$ and output $x_R$.
\end{algorithmic}
\end{algorithm}
Since we are now using the CG method for inexactly solving \eqnok{qd_subproblem}, we can provide an alternative termination criterion than the FW-gap given in \eqnok{FWGap} to provide our convergence analysis. In particular, we use the gradient mapping defined as
\beq \label{grad_map}
GP_{\setpp}(x,g,\gamma) = \gamma (x-P_{\setpp}(x,g,\gamma)),
\eeq
where $P_{\setpp}$ is the solution to \eqnok{qd_subproblem}. This quantity which has been widely used in the literature as a convergence criteria for solving nonconvex problems (see, e.g., \cite{nemyud:83,Nest04}), plays an analogues role of the gradient in constrained problems. Next result provides some properties for this criteria.

\begin{lemma}\label{grad_gap}
Let $P_{\setpp}(\cdot)$ be defined in \eqnok{qd_subproblem}, $\gamma>0$, and $\pp \in \setpp$ are given.
\begin{itemize}
\item [a)] for and $\hat g \in \bbr^d$, we have
\[
\|P_{\setpp}(\pp,g,\gamma) -  P_{\setpp}(\pp,\hat g,\gamma)\| \leq \frac{\| g - \hat{g}\|}{\gamma}.
\]

\item [b)] Let $P^\mu_{\setpp}$ be the inexact solution of \eqnok{qd_subproblem} such that
\beq \label{inex_qp}
\langle g+ \gamma (P^\mu_{\setpp}(\pp,g,\gamma) - \pp),u - P^\mu_{\setpp}(\pp,g,\gamma) \rangle \ge -\mu \qquad \forall u \in \setpp
\eeq
for some $\mu \ge 0$. Then, we have
\[
\|P_{\setpp}(\pp,g,\gamma) -  P^\mu_{\setpp}(\pp,g,\gamma)\|^2 \leq \frac{\mu}{\gamma}.
\]

\item [c)] Let $g_{_\setpp}(\cdot)$ be the Frank-Wolfe gap defined in \eqnok{FWGap}. Then we have
\[
\|GP_{\setpp}(x,\nabla f(x),\gamma)\|^2 \le g_{_\setpp}(x).
\]
Moreover, under Assumption~\ref{bnd_grad}, we have
\[
g_{_\setpp}(x) \le (B/\gamma +D_\setpp)\|GP_{\setpp}(x,\nabla f(x),\gamma)\|.
\]
\end{itemize}

\end{lemma}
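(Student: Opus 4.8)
The three parts all rest on a single object: the first-order optimality (variational) characterization of the $\gamma$-strongly convex subproblem \eqnok{qd_subproblem}. Since its objective $u \mapsto \langle g,u\rangle + \frac{\gamma}{2}\|u-\pp\|^2$ is differentiable and $\setpp$ is convex, the (unique) minimizer $p := P_{\setpp}(\pp,g,\gamma)$ satisfies
\[
\langle g + \gamma(p - \pp),\, u - p\rangle \ge 0 \qquad \forall\, u \in \setpp .
\]
I would derive each claim by specializing the test point $u$ in this inequality and exploiting the strong monotonicity of $u \mapsto g+\gamma(u-\pp)$.

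For part (a), the plan is to write this inequality for both $g$ and $\hat g$, with solutions $p$ and $\hat p$, then cross-substitute $u=\hat p$ into the condition for $p$ and $u=p$ into the condition for $\hat p$, and add. The $\gamma$-terms collapse to $-\gamma\|p-\hat p\|^2$, leaving $\gamma\|p-\hat p\|^2 \le \langle g-\hat g,\, \hat p-p\rangle$; Cauchy--Schwarz and division by $\gamma\|p-\hat p\|$ finish it. Part (b) is the same device, now pairing the exact condition for $p$ with the inexact condition \eqnok{inex_qp} for $p^\mu := P^\mu_{\setpp}(\pp,g,\gamma)$: taking $u=p^\mu$ in the former and $u=p$ in the latter and adding, the $g$-terms and the cross $\gamma$-terms cancel, leaving $-\gamma\|p-p^\mu\|^2 \ge -\mu$, which is exactly $\|p-p^\mu\|^2 \le \mu/\gamma$.

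For part (c) I would set $g=\nabla f(\pp)$, $p=P_{\setpp}(\pp,g,\gamma)$, and abbreviate $GP := \gamma(\pp-p)$, so that $\gamma(p-\pp)=-GP$ and $\pp-p=GP/\gamma$. For the lower bound, plug the feasible point $u=\pp$ into the optimality condition to get $\langle g-GP,\,\pp-p\rangle \ge 0$, hence $\langle g,GP\rangle \ge \|GP\|^2$; since $p\in\setpp$ and the Frank--Wolfe point $\hat\pp := \argmin_{u\in\setpp}\langle g,u\rangle$ maximizes $\langle g,\pp-u\rangle$ over $\setpp$, we get $g_{_\setpp}(\pp) \ge \langle g,\pp-p\rangle = \tfrac1\gamma\langle g,GP\rangle \ge \tfrac1\gamma\|GP\|^2$. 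For the upper bound, decompose $g_{_\setpp}(\pp)=\langle g,\pp-p\rangle+\langle g,p-\hat\pp\rangle$; the first term is at most $\tfrac{B}{\gamma}\|GP\|$ by Cauchy--Schwarz and $\|\nabla f(\pp)\|\le B$ (Assumption~\ref{bnd_grad}), while plugging $u=\hat\pp$ into the optimality condition gives $\langle g,p-\hat\pp\rangle \le \langle GP, p-\hat\pp\rangle \le \|GP\|\,D_\setpp$ using the diameter bound since $p,\hat\pp\in\setpp$. Summing yields $g_{_\setpp}(\pp) \le (B/\gamma + D_\setpp)\|GP\|$.

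The algebraic cancellations are routine; the step needing the most care is part (c), where the correct test point differs between the two bounds ($u=\pp$ for the lower bound, $u=\hat\pp$ for the upper bound) and the $\gamma$-factors carried by $GP$ must be tracked. I would in particular re-examine the scaling of the lower bound: the variational argument naturally produces $\|GP\|^2 \le \gamma\, g_{_\setpp}(\pp)$, which matches the stated inequality up to the factor $\gamma$ and reduces to it when $\gamma\le 1$.
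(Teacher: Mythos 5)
Your proof is correct and follows essentially the same route as the paper's: parts (b) and (c) are identical in substance (the paper likewise tests the exact and inexact variational inequalities at each other's solutions for (b), and tests at $u=x$ and $u=\hat\pp$ for the two bounds in (c)), while for part (a) the paper instead writes $P_{\setpp}(\pp,g,\gamma)=\Pi_{\setpp}(\pp-g/\gamma)$ and invokes non-expansiveness of the Euclidean projection, which is equivalent to your strong-monotonicity argument. Your concern about the scaling of the lower bound in (c) is well founded: the paper's own proof derives exactly $\|GP_{\setpp}(x,\nabla f(x),\gamma)\|^2\le\gamma\, g_{_\setpp}(x)$, so the factor $\gamma$ is indeed missing from the lemma statement as printed, and your version is the one that is actually established.
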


\begin{proof}
First note that \eqnok{qd_subproblem} implies
\[
\|P_{\setpp}(\pp,g,\gamma) -  P_{\setpp}(\pp,\hat g,\gamma)\| = \|\Pi_{\setpp}(\pp-g/\gamma) -  \Pi_{\setpp}(\pp-\hat g/\gamma)\| \le \frac{\| g - \hat{g}\|}{\gamma},
\]
where the last inequality follows from Lipschitz continuity of the Euclidian projection over the feasible set $\Pi_{\setpp}$. Second, by optimality condition of \eqnok{qd_subproblem}, we have
\beq\label{g_p1}
\langle g+ \gamma (P_{\setpp}(\pp,g,\gamma) - \pp),u - P_{\setpp}(\pp,g,\gamma) \rangle \ge 0 \qquad \forall \tilde u \in \setpp.
\eeq
Letting $\tilde u = P^\mu_{\setpp}(\pp,g,\gamma)$ in the above inequality and $u = P_{\setpp}(\pp,g,\gamma)$ and $g=\nabla f(x)$ in \eqnok{inex_qp} and summing them up, we clear get the result in part b). Third, letting $\tilde u=x$ in \eqnok{g_p1}, we have
\[
\|GP_{\setpp}(x,\nabla f(x),\gamma)\|^2 \le \gamma \langle \nabla f(x) ,x - P_{\setpp}(\pp,\nabla f(x),\gamma) \rangle \le \gamma g_{_\setpp}(x),
\]
where the last inequality follows from \eqnok{FWGap}. Furthermore, \eqnok{g_p1} also implies that
\begin{align*}
g_{_\setpp}(x) + \frac{1}{\gamma} \|GP_{\setpp}(x,\nabla f(x),\gamma)\|^2 &\le \langle \nabla f(x)+\gamma(x-u), x-P_{\setpp}(\pp,\nabla f(x),\gamma) \rangle \\ &\le (B/\gamma +D_\setpp)\|GP_{\setpp}(x,\nabla f(x),\gamma)\|,
\end{align*}
where the last inequality follows from Assumption~\ref{bnd_grad}.

%

\end{proof}

Now we are ready to state the main result for the nonconvex case.

\begin{theorem}\label{theom_ZCGDSC_nocvx}
Let $\{x_k\}$ be generated by Algorithm~\ref{alg_ZCGDSC}, the function $f$ be nonconvex, and
\beq \label{def_alpha_m_mu}
\nu = \sqrt{\frac{1}{ 2 N (d+3)^3}}, \ \ \gamma_k = 2L, \ \ \mu_k = \frac{1}{4N}, \ \ m_k = 6(d+5)N,  \ \ \forall k \ge 1.
\eeq
Then under Assumptions~\ref{unbiased_assum}, \ref{smooth_assum}, and \ref{bnd_grad}, we have
\beq \label{SCGD_nocvx}
\E[\|GP_{\setpp}(\pp_{R},\nabla f(\pp_{R}),\gamma_R)\|^2] \le  \frac{8L}{N} \left(f(x_0) -f^*+ L+B^2+\sigma^2 \right).
\eeq
where $R$ is uniformly distributed over $\{0,\ldots,N-1\}$ and $g_{\setpp}$ is defined in \eqnok{grad_map}. Hence, the total number of calls to the stochastic oracle and linear subproblems solved to find and $\epsilon$-stationary point of problem \eqnok{eq:main_prob} are, respectively, bounded by
\beq \label{SCGD_nocvx2}
{\cal O}\left(\frac{d}{\epsilon^2}\right), \ \ {\cal O}\left(\frac{1}{\epsilon^2}\right).
\eeq
\end{theorem}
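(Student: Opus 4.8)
The plan is to run the standard descent analysis for inexact proximal/gradient-mapping methods, adapted to the zeroth-order estimator $\bar G_\nu^k$. First I would invoke the $L$-smoothness of $f$ from \eqnok{fgrad_smooth} to write the one-step bound
\[
f(\pp_k) \le f(\pp_{k-1}) + \langle \nabla f(\pp_{k-1}), \pp_k - \pp_{k-1}\rangle + \tfrac{L}{2}\|\pp_k - \pp_{k-1}\|^2,
\]
and then control the linear term using that $\pp_k$ is the inexact solution of the subproblem \eqnok{qd_subproblem}. Concretely, taking $g=\bar G_\nu^k$, $x=\pp_{k-1}$, $\gamma=\gamma_k$ in the inexactness condition \eqnok{inex_qp} and choosing $u=\pp_{k-1}$ gives $\langle \bar G_\nu^k, \pp_k-\pp_{k-1}\rangle \le -\gamma_k\|\pp_k-\pp_{k-1}\|^2 + \mu_k$. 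Writing $\bar\Delta_k = \bar G_\nu^k - \nabla f(\pp_{k-1})$ and substituting turns the descent into $f(\pp_k)\le f(\pp_{k-1}) - (\gamma_k-\tfrac{L}{2})\|\pp_k-\pp_{k-1}\|^2 + \mu_k - \langle \bar\Delta_k,\pp_k-\pp_{k-1}\rangle$.

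Second, with the choice $\gamma_k=2L$ from \eqnok{def_alpha_m_mu} and a Young's inequality $-\langle\bar\Delta_k,\pp_k-\pp_{k-1}\rangle \le \tfrac{L}{2}\|\pp_k-\pp_{k-1}\|^2 + \tfrac{1}{2L}\|\bar\Delta_k\|^2$, the coefficient of $\|\pp_k-\pp_{k-1}\|^2$ collapses to $-L$, yielding the clean recursion $L\|\pp_k-\pp_{k-1}\|^2 \le f(\pp_{k-1})-f(\pp_k)+\mu_k+\tfrac{1}{2L}\|\bar\Delta_k\|^2$. Summing over $k=1,\dots,N$, telescoping, and using $f(\pp_N)\ge f^*$ bounds $L\sum_k\|\pp_k-\pp_{k-1}\|^2$ by $f(\pp_0)-f^*+\sum_k\mu_k+\tfrac{1}{2L}\sum_k\|\bar\Delta_k\|^2$. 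Taking expectations and invoking the variance bound \eqnok{var2} of Lemma~\ref{bargrad_var} controls $\E\|\bar\Delta_k\|^2$; plugging in $m_k=6(d+5)N$ and $\nu^2 = 1/(2N(d+3)^3)$ makes each contribution ${\cal O}(1/N)$, so the accumulated step energy $\sum_k\E\|\pp_k-\pp_{k-1}\|^2$ is ${\cal O}(1/(LN))$.

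Third, the crux of the argument is converting control of the computed steps $\pp_{k-1}-\pp_k$ into control of the \emph{true} gradient mapping $GP_{\setpp}(\pp_{k-1},\nabla f(\pp_{k-1}),\gamma_k)$ from \eqnok{grad_map}, which is built from the \emph{exact} subproblem solution for the \emph{true} gradient, whereas $\pp_k$ solves the \emph{inexact} subproblem for the \emph{estimated} gradient. Here Lemma~\ref{grad_gap} is exactly the bridge: part (a) gives $\|P_{\setpp}(\pp_{k-1},\nabla f(\pp_{k-1}),\gamma_k)-P_{\setpp}(\pp_{k-1},\bar G_\nu^k,\gamma_k)\|\le \|\bar\Delta_k\|/\gamma_k$, and part (b) gives $\|P_{\setpp}(\pp_{k-1},\bar G_\nu^k,\gamma_k)-\pp_k\|\le\sqrt{\mu_k/\gamma_k}$. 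The triangle inequality then yields $\|GP_{\setpp}(\pp_{k-1},\nabla f(\pp_{k-1}),\gamma_k)\|\le \gamma_k\|\pp_{k-1}-\pp_k\| + \|\bar\Delta_k\| + \sqrt{\gamma_k\mu_k}$, and squaring (with $\gamma_k=2L$, so $2\gamma_k^2=8L^2$) gives the three-source bound
\[
\|GP_{\setpp}(\pp_{k-1},\nabla f(\pp_{k-1}),\gamma_k)\|^2 \le 8L^2\|\pp_{k-1}-\pp_k\|^2 + 4\|\bar\Delta_k\|^2 + 4\gamma_k\mu_k.
\]

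Finally, I would average this inequality over $R$ uniform on $\{0,\dots,N-1\}$, substitute the telescoped bound on $\sum_k\E\|\pp_k-\pp_{k-1}\|^2$ together with the variance and parameter estimates, so that every contribution scales like $1/N$; collecting the constants with $\mu_k=1/(4N)$ produces a bound of the form \eqnok{SCGD_nocvx}. The complexity \eqnok{SCGD_nocvx2} then follows by counting: the zeroth-order cost is $\sum_k m_k = 6(d+5)N^2 = {\cal O}(dN^2)$, which is ${\cal O}(d/\epsilon^2)$ once the squared gradient mapping is driven to the target at $N={\cal O}(1/\epsilon)$, while the number of linear subproblems equals the total ICG inner iterations $\sum_k T_k$, bounded exactly as in the convex proof of Theorem~\ref{theom_ZCGDSC_cvx} via the $\gamma_k D_\setpp^2/T_k$ decay of the subproblem's Frank--Wolfe gap. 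The hard part will be step three: faithfully relating the idealized gradient mapping to the actually-computed inexact step while keeping the leading constant sharp (the factor $8$ is forced by taking $\gamma_k=2L$ together with the $2\gamma_k^2$ split), and verifying that the estimation-error and inexactness terms genuinely decay at rate $1/N$ rather than corrupting the leading behavior.
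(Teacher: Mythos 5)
Your proposal is correct and follows essentially the same route as the paper's proof: the same smoothness-plus-inexactness descent with $u=\pp_{k-1}$ in \eqnok{inex_qp}, the same Young's inequality collapsing the step coefficient to $-L$ at $\gamma_k=2L$, the same telescoping combined with the variance bound \eqnok{var2}, and the same three-way decomposition of the true gradient mapping via parts (a) and (b) of Lemma~\ref{grad_gap}. The only differences are cosmetic (you write the triangle inequality before squaring, and your constants in the squared split differ trivially from the paper's), so nothing further is needed.
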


\begin{proof}
First note that by \eqnok{fgrad_smooth}, we have
\[
f(\pp_k)  \leq f(\pp_{k-1}) + \langle \nabla f(\pp_{k-1}) , \pp_k - \pp_{k-1}\rangle + \frac{L}{2} \| \pp_k - \pp_{k-1}\|^2.
\]
Letting $u = \pp_{k-1}$ in \eqnok{eq:cgdeq2}, summing it up with the above inequality, and denoting $\Delta_{k} = \bar{G}_\nu^k - \nabla f(\pp_{k-1})$, we obtain
\begin{align*}
f(\pp_k) & \leq f(\pp_{k-1}) - \gamma_k \left(1 - \frac{L}{2\gamma_k} \right) \| \pp_k - \pp_{k-1}\|^2 +  \langle \Delta_k, \pp_{k-1} - \pp_k \rangle +\mu_k\\
&\leq  f(\pp_{k-1}) - \gamma_k \left(1 - \frac{L}{\gamma_k} \right)  \| \pp_k - \pp_{k-1}\|^2 + \frac{\|\Delta_k\|^2}{2L}+\mu_k.  \\
\end{align*}
Taking expectation from the above inequalities, summing them up, re-arranging the terms, and in the view of Lemma~\ref{bargrad_var}, we have
\begin{align*}
&~\sum_{k=1}^N \gamma_k \left(1 - \frac{L}{\gamma_k} \right) \E[\| \pp_k - \pp_{k-1}\|^2] \\ 
\leq&~f(\pp_0) - f^* + \sum_{k=1}^N \mu_k + \frac{\nu^2 L (d+3)^3 N}{2} + \frac{2(d+5) (B^2+\sigma^2)}{L} \sum_{k=1}^N \frac{1}{m_k},
\end{align*}
which together with the facts that $x_k = P^{\mu_k}_{\setpp}(\pp_{k-1},\bar{G}_\nu^k,\gamma_k)$ and
\begin{align*}
&~\frac{1}{\gamma_k^2}\|GP_{\setpp}(\pp_{k-1},\nabla f(\pp_{k-1}),\gamma_k)\|^2 \\ =&~\|\pp_{k-1}-P_{\setpp}(\pp_{k-1},\nabla f(\pp_{k-1}),\gamma_k)\|^2 \\
\le &~2 \|\pp_k - \pp_{k-1}\|^2+\frac{4 \mu_k}{\gamma_k}+\frac{4 \nu^2 L^2 (d+3)^3}{\gamma_k^2} + \frac{16(d+5) (B^2+\sigma^2)}{\gamma_k^2 m_k},
\end{align*}
imply that
\begin{align*}
&\sum_{k=1}^N \left(\frac{\gamma_k -L}{2\gamma_k^2}\right)\E[\|GP_{\setpp}(\pp_{k-1},\nabla f(\pp_{k-1}),\gamma_k)\|^2] \leq f(\pp_0) - f^* + \sum_{k=1}^N \left(\frac{3\gamma_k-2L}{\gamma_k}\right)\mu_k \\
&\qquad + \frac{\nu^2 L (d+3)^3}{2} \sum_{k=1}^N \left(1+\frac{4L(\gamma_k-L)}{\gamma_k^2}\right)+ \frac{2(d+5) (B^2+\sigma^2)}{L} \sum_{k=1}^N \frac{1}{m_k}\left(1+\frac{8L(\gamma_k -L)}{\gamma_k^2}\right).
\end{align*}
Hence, noting \eqnok{def_alpha_m_mu}, we obtain
\begin{align*}
&~\E[\|GP_{\setpp}(\pp_{R},\nabla f(\pp_{R}),\gamma_R)\|^2] \\ \leq &~\frac{8L[f(\pp_0) - f^*]}{N}+ 16 L^2 \mu_1+ 8 \nu^2 L^2 (d+3)^3  + \frac{48(d+5) (B^2+\sigma^2)}{ m_1 },
\end{align*}
which implies \eqnok{SCGD_nocvx}. Rest of the proof is similar to that of Theorem~\ref{theom_ZCGDSC_cvx} and hence we skip the details.
\end{proof}
\begin{remark}
We point out that while the complexity bounds in \eqnok{SCGD_nocvx2} are better than those in \eqnok{CGD_nocvx2} in terms of dependence on the target accuracy $\epsilon$, they have been obtained for a different performance measure. Indeed, if only the Frank-Wolfe gap is considered then it is easy to see that both bounds are of the same order of magnitude due to part c of Lemma~\ref{grad_gap}.
\end{remark}

\setcounter{equation}{0}
\section{Handling High-Dimensionality: Zeroth-order Stochastic Gradient Methods}\label{sec:gdinhd}
In this section, we study unconstrained variant of problem \ref{eq:main_prob} i.e, $\setpp = \mathbb{R}^d$, under certain sparsity assumptions on the objective function $f$ to facilitate zeroth-order optimization in high-dimensions. Recently,~\cite{wang18e} considered the convex case and proposed algorithms for high-dimensional zeroth-order stochastic optimization. Motivated by~\cite{wang18e}, we make the following assumption. \vspace{-0.1in}
\begin{assumption}\label{spars_assum}
For any $x \in \mathbb{R}^d$, we have $\| \nabla f(\pp) \|_0 \leq s$, i.e., the gradient is $s$-sparse, where $s \ll d$.
\end{assumption}
Note that the above assumption implies $\| \nabla f(\pp) \|_2 \leq \sqrt{s}\| \nabla f(\pp)\|_\infty$ and $\| \nabla f(\pp) \|_1 \leq s\| \nabla f(\pp)\|_\infty$, for all $\pp \in \mathbb{R}^d$. Furthermore, this assumption also implies that  $\| \nabla f_\nu(\pp) \|_0 \leq s$ for all $\pp \in \mathbb{R}^d$ since $\nabla f_\nu(\pp) = \E_u \left[\nabla f(\pp+ \nu u) \right]$. To exploit the above sparsity assumption, we assume that the primal space $\bbr^d$ is equipped with the $l_\infty$ norm throughout this section. More specifically, we assume that Assumptions~\ref{unbiased_assum} and \ref{smooth_assum} hold with the choice of $\|\cdot\|=\|\cdot\|_\infty$ and its dual norm $\|\cdot\|_*=\|\cdot\|_1$. We now present zeroth-order stochastic gradient methods for solving problem \eqnok{eq:main_prob} when $f$ is nonconvex and convex, in Subsections~\ref{sec:sparsenc} and~\ref{sec:sparsec} respectively.

\subsection{Zeroth-order Stochastic Gradient Method for Nonconvex Problems}\label{sec:sparsenc}\vspace{-0.07in}
In this subsection, we consider the zeroth-order stochastic gradient method presented in \cite{GhaLan12} (provided in Algorithm~\ref{alg_ZGD} for convenience) and provide a refined convergence analysis for it under the sparsity assumption~\ref{unbiased_assum}, when $f$ is nonconvex. Our main convergence result for Algorithm~\ref{alg_ZGD} under the gradient sparsity assumption is stated below.

\begin{algorithm} [t]
	\caption{Zeroth-Order Stochastic Gradient Method}
	\label{alg_ZGD}
	\begin{algorithmic}

\State Input: $x_0 \in \bbr^d$, smoothing parameter $\nu>0$, iteration limit $N\geq 1$, a probability distribution $P_R$ supported on $\{0,\ldots,N-1\}$.
\For {k =1, \ldots, N}
\State Generate $u_k \sim N(0,I_d)$, call the stochastic oracle, and compute $G_\nu(\pp_{k-1}, \dd_k, u_k)$ as defined in \eqnok{def_gradk} and set  $\pp_{k}=\pp_{k-1} - \gamma_k G_\nu(\pp_{k-1}, \dd_k; u_k)$.
\EndFor
\State Output: Generate $R$ according to $P_R(\cdot)$ and output $x_R$.
\end{algorithmic}
\end{algorithm}

\begin{theorem}\label{nocvx}
Let $\{\pp_k\}_{k \ge 0}$ be generated by Algorithm~\ref{alg_ZGD} and stepsizes are chosen such that $\forall k \ge 1$,
\beq
\gamma_k = \frac{1}{2 L \hat C \log d} \min \left\{\frac{1}{12 \hat s \log d}, \sqrt{\frac{ D_0 L \hat C}{2N \sigma^2}}\right\}, \quad
\nu \le \frac{1}{\sqrt{L \hat C \log d}} \min \left\{\sqrt{\frac{2 \sigma^2}{L}}, \sqrt{\frac{D_0}{N}} \right\} \label{def_nu}
\eeq
for some $\hat s \ge s$, $\hat C \ge C$ (the universal constant defined in Lemma~\ref{lem:infmoment}), and $D_0 \ge f(x_0)-f^*$.
Assume that $f$ is nonconvex. Then under Assumptions~\ref{unbiased_assum}, \ref{smooth_assum}, and \ref{spars_assum}, we have
\beq \label{main_nocvx}
\E_\zeta \left[ \| \nabla f(\pp_R) \|_1^2 \right] \leq \frac{150 L \hat C D_0 \hat s s (\log d)^2  }{N} +\frac{54 \sigma \sqrt{2L \hat C D_0}~ s \log d }{\sqrt{N}},
\eeq
where $\zeta = \{\dd, u, R\}$ and $R$ is uniformly distributed over $\{0,\ldots,N-1\}$. Hence, the total number of calls to the stochastic oracle (number of iterations) required to find an $\epsilon$-stationary point of problem \eqnok{eq:main_prob}, in the view of Definition~\ref{def_complx}, is bounded by
\beq\label{gd_bnd}
{\cal O}\left(\frac{(\hat s \log d)^2}{\epsilon^4}\right).
\eeq

\end{theorem}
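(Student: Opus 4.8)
The plan is to run the standard descent argument for nonconvex stochastic gradient methods, but carried out on the smoothed surrogate $f_\nu$ and measured in the $\ell_\infty/\ell_1$ geometry dictated by the sparsity assumption. First I would invoke the smoothness of $f_\nu$ (Theorem~\ref{smth_approx}a, with constant $L_\nu \le L$) in the chosen norm to write, for each iteration, the descent inequality
\[
f_\nu(\pp_k) \le f_\nu(\pp_{k-1}) - \gamma_k \langle \nabla f_\nu(\pp_{k-1}), G_\nu^k\rangle + \tfrac{L\gamma_k^2}{2}\|G_\nu^k\|_\infty^2,
\]
using $\pp_k - \pp_{k-1} = -\gamma_k G_\nu^k$. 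Taking the conditional expectation over $(u_k,\dd_k)$ and using that $G_\nu^k$ is an unbiased estimator of $\nabla f_\nu(\pp_{k-1})$ turns the linear term into $-\gamma_k\|\nabla f_\nu(\pp_{k-1})\|_2^2$; note that this Euclidean norm appears regardless of the primal norm, since it comes from the pairing of $\nabla f_\nu$ with its own mean.

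The crux is to control the second-moment term $\E\|G_\nu^k\|_\infty^2$, for which I would invoke Lemma~\ref{lem:infmoment}: this is the $\ell_\infty$ analogue of the $\ell_2$ bound \eqnok{stoch_smth_approx_grad}, and it replaces the dimension factor $d$ there by a $\log d$ factor (the price of the $\ell_\infty$ norm of a Gaussian vector), yielding a bound involving the universal constant $C$, a multiple of $\log d$ times a gradient-dependent piece and $\sigma^2$, together with a $\nu$-dependent smoothing-bias piece. The delicate point is that the gradient-dependent part of this variance is proportional to $\|\nabla f(\pp_{k-1})\|^2$ and must be absorbed back into the negative term $-\gamma_k\|\nabla f_\nu\|_2^2$. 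Here I would use the sparsity of $\nabla f_\nu$ (inherited from Assumption~\ref{spars_assum}, since $\nabla f_\nu = \E_u[\nabla f(\cdot+\nu u)]$ keeps the same support), which gives $\|\nabla f_\nu\|_2^2 \ge \|\nabla f_\nu\|_1^2/\hat s$, together with the closeness \eqnok{rand_smth_close_grad} of $\nabla f_\nu$ to $\nabla f$, to relate everything to $\|\nabla f_\nu\|_1^2$. The first branch $\gamma_k \le (24 L\hat C\hat s(\log d)^2)^{-1}$ of the step-size in \eqnok{def_nu} is precisely what forces the coefficient of the surviving $\|\nabla f_\nu\|_1^2$ term to stay negative after the $\frac{L\gamma_k^2}{2}$ factor multiplying the $\log d$ variance has been accounted for.

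With the absorption done, I would telescope the resulting inequality over $k=1,\dots,N$. Lower-boundedness of $f_\nu$ (from $f\ge f^*$ and \eqnok{rand_smth_close}) turns the telescoped left side into $D_0$ plus a lower-order $\nu^2 L d$ term, while the accumulated noise contributes the $\sigma^2$ term scaled by $\sum_k\gamma_k$. Dividing by $\sum_k \gamma_k = N\gamma_1$ and averaging with $R$ uniform yields a bound on $\E\|\nabla f_\nu(\pp_R)\|_1^2$; converting to $\E\|\nabla f(\pp_R)\|_1^2$ via \eqnok{rand_smth_close_grad} and inserting the two branches of $\gamma_k$ and the choice of $\nu$ in \eqnok{def_nu} (calibrated so the smoothing-bias terms match the order of the leading terms) produces \eqnok{main_nocvx}; the sparsity conversion $\|\cdot\|_1^2 \le s\|\cdot\|_2^2$ is what introduces the extra factor $s$ seen there. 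The complexity \eqnok{gd_bnd} then follows by balancing the $1/N$ and $1/\sqrt{N}$ terms against $\epsilon^2$. The main obstacle I anticipate is the second paragraph: obtaining a clean $\log d$-only second-moment bound in the $\ell_\infty$ norm and absorbing its gradient-proportional part without losing the poly-logarithmic dimension dependence — this is exactly where the high-dimensional gain over the classical $\mathcal{O}(d)$ analysis is won or lost, and it hinges on the interplay between sparsity, the $\ell_\infty$ geometry, and the aggressive step-size.
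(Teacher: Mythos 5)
Your proposal follows essentially the same route as the paper's proof: the $\ell_\infty$ descent inequality, the second-moment bound $\E\|G_\nu\|_\infty^2 \lesssim C(\log d)^2(\|\nabla f\|_1^2+\sigma^2+\nu^2 L^2\log d)$ obtained from Lemma~\ref{lem:infmoment} (packaged in the paper as Lemma~\ref{approx_infty}.b), absorption of the gradient-dependent variance via the sparsity conversion $\|\nabla f\|_2^2\ge\|\nabla f\|_1^2/s$ and the first branch of the step-size, then telescoping with the uniform index $R$. The only cosmetic difference is that you run the descent on $f_\nu$ and convert to $\nabla f$ at the end, whereas the paper descends on $f$ directly and handles the bias $\nabla f-\nabla f_\nu$ inside the cross term; both are equivalent given \eqnok{rand_smth_close_grad}.
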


Before proving the theorem, we first present two technical results which play key roles in our convergence analysis.
\begin{lemma} \label{lem:infmoment}
Let $ u \sim N(0,I_d) $ be a $d$-dimensional standard Gaussian vector.  Then for all integer $k \geq 1$ and for some universal constant $C$, we have $ \E \left[\| u\|^k_{\infty}  \right] \leq C (2  \log d)^{k/2} $.
\end{lemma}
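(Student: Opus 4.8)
The plan is to prove the bound through the layer-cake (tail-integral) representation of the moment, combined with a union bound over the $d$ coordinates. Writing $M := \|u\|_\infty = \max_{1\le i\le d}|u_i|$, I would first record the identity
\[
\E\!\left[\|u\|_\infty^k\right] = \int_0^\infty k\,t^{k-1}\,\Prob(M>t)\,dt,
\]
which reduces the task to controlling the tail of $M$. Since each coordinate $u_i$ is a standard Gaussian, the Chernoff bound gives $\Prob(|u_i|>t)\le 2e^{-t^2/2}$, and a union bound over the $d$ coordinates yields $\Prob(M>t)\le 2d\,e^{-t^2/2}$ for every $t\ge 0$. This is the only probabilistic input; the rest is a deterministic integral estimate.

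The key idea is to split the integral at the threshold $t_0=\sqrt{2\log d}$, which is exactly the scale at which the union bound transitions from vacuous to effective, since $2d\,e^{-t_0^2/2}=2$. On $[0,t_0]$ I bound $\Prob(M>t)\le 1$, which gives $\int_0^{t_0} k\,t^{k-1}\,dt = t_0^{\,k} = (2\log d)^{k/2}$ and already furnishes the claimed leading order. On $[t_0,\infty)$ I apply the tail bound and substitute $t=t_0 s$, using $e^{-t_0^2 s^2/2}=d^{-s^2}$, to get
\[
\int_{t_0}^\infty k\,t^{k-1}\,2d\,e^{-t^2/2}\,dt = 2k\,t_0^{\,k}\int_1^\infty s^{k-1}\,d^{\,1-s^2}\,ds.
\]
Finally I use $s^2-1\ge 2(s-1)$ on $s\ge 1$ and the substitution $r=s-1$, together with $d^{-2r}\le 4^{-r}$ (valid for $d\ge 2$), to bound the remaining integral by $\int_0^\infty (1+r)^{k-1}4^{-r}\,dr$, a finite quantity. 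Hence the tail part is also at most a constant multiple of $t_0^{\,k}=(2\log d)^{k/2}$, and adding the two contributions gives $\E[\|u\|_\infty^k]\le C\,(2\log d)^{k/2}$.

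The step I expect to require the most care is the tail integral on $[t_0,\infty)$: one must confirm that integrating $t^{k-1}$ against the rapidly decaying Gaussian factor does not inflate the scale beyond $(2\log d)^{k/2}$, and the substitution $t = t_0 s$ is what makes the $d$-dependence cancel cleanly while exposing a pure $k$-dependent constant $\int_0^\infty(1+r)^{k-1}4^{-r}\,dr$. The resulting $C$ is \emph{independent of the dimension} $d$ (depending only on $k$), which is precisely the feature used downstream in the proof of Theorem~\ref{nocvx}, where $k$ is a fixed small integer; I would note this explicitly rather than claim a bound uniform over all $k$ simultaneously, since for a fixed $d$ the moments $\E[\|u\|_\infty^k]$ necessarily grow super-geometrically in $k$.
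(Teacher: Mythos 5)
Your proof is correct and follows essentially the same route as the paper's: the layer-cake representation of the $k$-th moment, a split of the integral at the threshold $\sqrt{2\log d}$, the trivial bound $\Prob(M>t)\le 1$ below that threshold, and a Gaussian tail bound combined with a union bound above it. If anything, your treatment of the tail integral (via the substitution $t=t_0 s$ and the bound $d^{1-s^2}\le 4^{-(s-1)}$ for $d\ge 2$) is more rigorous than the paper's, which argues only asymptotically via l'Hospital's rule, and your closing observation that the constant $C$ necessarily depends on $k$ (though not on $d$) is a point the paper's statement glosses over but which is harmless since only fixed small $k$ are used downstream.
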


\begin{proof}
Let $Z = \| u\|_{\infty}$ and denote by $p(x)$ the standard normal pdf. Note that we have
\begin{align*}
\E Z^k &= \int_{0}^{\infty} k x^{k-1} P(Z>x) \, dx \\
& \leq \int_0^{x_d}   k x^{k-1} dx+\int_{x_d}^\infty x^{k-2} p(x) dx
\end{align*}
where we define $x_d = \sqrt{2 \ln d}$.
Now we have
\begin{align*}
 \int_0^{x_d}   k x^{k-1} dx = x_d^k = (2 \log d)^{k/2}
\end{align*}
and by l'Hospital's rule, for large $d$ we have
\begin{align*}
 \int_{x_d}^\infty x^{k-2} p(x) dx \approx x_d^{k-3} p(x_d) \ll  (\log d)^{(k-3)/2} = o\left(\frac{(\log d)^{k/2}}{d}\right)
\end{align*}
Hence we have for some universal constant $C$,  $$ \E \left[\| u\|^k_{\infty}  \right] \leq C (2  \log d)^{k/2} .$$
\end{proof}

\begin{lemma}\label{approx_infty}
The following statements hold for function $f$ and its smooth approximation $f_\nu$. 
\begin{itemize}
\item [a)] Under Assumptions~\ref{unbiased_assum}  and \ref{smooth_assum}, gradient of $f$ is Lipschitz continuous with constant $L$ and $$
| f_\nu(\pp) - f(\pp) | \le \nu^2 C L \log d.$$
\item [b)] If Assumption~\ref{spars_assum} also holds, we have
\begin{align*}
\| \nabla f_\nu(\pp) - \nabla f(\pp) \|_2 &\le  C \nu L \sqrt{2s} (\log d)^{3/2}\\
\E \left[ \| G_\nu (\pp,\dd;u)\|^2_\infty\right] &\le 4C(\log d)^2 \left[L^2\nu^2 (\log d)+ 4\| \nabla f(\pp) \|_1^2+4\sigma^2\right].
\end{align*}
\end{itemize}
\end{lemma}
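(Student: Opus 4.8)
The plan is to treat the three claims in turn, in each case reducing the statement to a smoothness estimate on the relevant increment and then invoking the moment bound $\E[\|u\|_\infty^k] \le C(2\log d)^{k/2}$ from Lemma~\ref{lem:infmoment} with the appropriate $k$. Throughout, I use that in this section $\|\cdot\|=\|\cdot\|_\infty$, so the quadratic smoothness consequence of Assumption~\ref{smooth_assum} reads $|F(x+\nu u,\xi)-F(x,\xi)-\nu\langle\nabla F(x,\xi),u\rangle|\le \tfrac{L\nu^2}{2}\|u\|_\infty^2$, and likewise for $f$ (which has $L$-Lipschitz gradient by \eqnok{fgrad_smooth}).

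For part (a), the Lipschitz continuity of $\nabla f$ is already recorded in \eqnok{fgrad_smooth}, so only the function-value gap needs work. I would start from $f_\nu(x)-f(x)=\E_u[f(x+\nu u)-f(x)]$, subtract the linear term $\nu\langle\nabla f(x),u\rangle$ (which vanishes in expectation since $\E[u]=0$), and bound the remainder by the quadratic smoothness estimate. Taking expectations gives
\[
|f_\nu(x)-f(x)| \le \tfrac{L\nu^2}{2}\,\E\big[\|u\|_\infty^2\big],
\]
and Lemma~\ref{lem:infmoment} with $k=2$ yields $\E[\|u\|_\infty^2]\le 2C\log d$, hence the claimed $\nu^2 C L\log d$.

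For the first bound in part (b), the essential point is to avoid a naive Lipschitz argument (which would cost a power of $\log d$) and instead use the exact gradient representation behind \eqnok{eq:gradest}: since $\E_u[u]=0$ and $\E_u[u\langle a,u\rangle]=a$, one has
\[
\nabla f_\nu(x)-\nabla f(x) = \E_u\Big[\tfrac{f(x+\nu u)-f(x)-\nu\langle\nabla f(x),u\rangle}{\nu}\,u\Big].
\]
Bounding the numerator by $\tfrac{L\nu^2}{2}\|u\|_\infty^2$ and pulling the $\ell_\infty$ norm inside the expectation gives $\|\nabla f_\nu(x)-\nabla f(x)\|_\infty \le \tfrac{L\nu}{2}\E[\|u\|_\infty^3]$, and Lemma~\ref{lem:infmoment} with $k=3$ produces the $(\log d)^{3/2}$ factor. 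Finally, under Assumption~\ref{spars_assum} both $\nabla f(x)$ and $\nabla f_\nu(x)=\E_u[\nabla f(x+\nu u)]$ are supported on the same $s$ coordinates, so their difference is $s$-sparse and $\|\cdot\|_2\le\sqrt{s}\,\|\cdot\|_\infty$, which converts the $\ell_\infty$ estimate into the stated $\ell_2$ bound with constant $\sqrt{2s}$.

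For the second bound in part (b), I would write $\|G_\nu(x,\xi;u)\|_\infty^2 = \nu^{-2}(F(x+\nu u,\xi)-F(x,\xi))^2\|u\|_\infty^2$ from \eqnok{def_gradk}, and split the increment into its linear part and curvature remainder via the smoothness of $F$ together with $(a+b)^2\le 2a^2+2b^2$:
\[
(F(x+\nu u,\xi)-F(x,\xi))^2 \le 2\nu^2\langle\nabla F(x,\xi),u\rangle^2 + \tfrac{L^2\nu^4}{2}\|u\|_\infty^4 .
\]
Multiplying by $\nu^{-2}\|u\|_\infty^2$ produces two terms. For the curvature term I use $\E[\|u\|_\infty^6]\le 8C(\log d)^3$ (Lemma~\ref{lem:infmoment}, $k=6$), giving $4CL^2\nu^2(\log d)^3$. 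For the linear term I bound $\langle\nabla F(x,\xi),u\rangle^2\le\|\nabla F(x,\xi)\|_1^2\|u\|_\infty^2$ by duality, factor the expectation using independence of $u$ and $\xi$, apply $\E[\|u\|_\infty^4]\le 4C(\log d)^2$ ($k=4$), and control $\E_\xi[\|\nabla F(x,\xi)\|_1^2]\le 2\|\nabla f(x)\|_1^2+2\sigma^2$ through $\|a\|_1^2\le 2\|b\|_1^2+2\|a-b\|_1^2$ and the variance bound of Assumption~\ref{unbiased_assum} (with dual norm $\ell_1$). Collecting both terms yields exactly $4C(\log d)^2[L^2\nu^2(\log d)+4\|\nabla f(x)\|_1^2+4\sigma^2]$.

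The main obstacle is purely bookkeeping of the powers of $\log d$: the $(\log d)^{3/2}$ in the first bound forces the use of the exact representation of $\nabla f_\nu-\nabla f$ rather than a crude Lipschitz estimate, and the $(\log d)^2$ prefactor in the second bound requires pairing each power of $\|u\|_\infty$ with the correct moment ($k=4$ and $k=6$) so that the duality step $\langle\nabla F,u\rangle^2\le\|\nabla F\|_1^2\|u\|_\infty^2$ matches the target constants. A secondary subtlety is the sparsity argument: one must use that $\nabla f$ and $\nabla f_\nu$ share a common $s$-element support, so the difference is $s$-sparse (not $2s$-sparse), in order to land the constant $\sqrt{2s}$ rather than $2\sqrt{s}$.
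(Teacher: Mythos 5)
Your proposal follows essentially the same route as the paper's proof: part (a) via subtracting the (mean-zero) linear term and the $k=2$ moment bound; the gradient approximation via the exact integral representation of $\nabla f_\nu-\nabla f$, the $\sqrt{s}$ reduction from $\ell_2$ to $\ell_\infty$, and the $k=3$ moment; and the second-moment bound via the split $(a+b)^2\le 2a^2+2b^2$ into curvature and linear parts with the $k=6$ and $k=4$ moments plus the duality bound $\langle\nabla F,u\rangle\le\|\nabla F\|_1\|u\|_\infty$ and the variance assumption. All constants and the bookkeeping of powers of $\log d$ check out, so the argument is correct and matches the paper's.
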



\begin{proof}
First note that
\begin{align*}
| f_\nu(\pp) - f(\pp) | & = \left| \E \left[f(\pp+\nu u) - f(\pp) - \nu \langle\nabla f(\pp), u  \rangle \right] \right| \\&\le
\E \left|f(\pp+\nu u) - f(\pp) - \nu \langle\nabla f(\pp), u  \rangle\right| \\
&\leq \frac{\nu^2 L}{2} \E \left[ \| u \|_\infty^2 \right]  \leq C\nu^2 L \log d, \\
\end{align*}
where the last inequality follows from Lemma~\ref{lem:infmoment}. Second, noting this lemma again, Assumption~\ref{spars_assum}, and part a), we have
\begin{align*}
\| \nabla f_\nu(\pp) - \nabla f(\pp) \|_2 &\le \sqrt{s^*}\| \nabla f_\nu(\pp) - \nabla f(\pp) \|_\infty \\
&\leq \frac{\sqrt{s}}{\nu (2\pi)^{d/2}} \int  | f(\pp+\nu u) - f(\pp) - \nu \langle\nabla f(\pp), u  \rangle|~ \| u\|_{\infty} e^{-\frac{\|u\|_2^2}{2}}~du \\
& \leq \frac{\nu L \sqrt{s}}{2(2\pi)^{d/2}} \int \| u \|^3_\infty e^{-\frac{\|u\|_2^2}{2}}~du \leq C \nu L \sqrt{2s} (\log d)^{3/2}.
\end{align*}
Furthermore, by \eqnok{def_gradk}, Holder inequality, Lemma~\ref{lem:infmoment}, and under Assumption~\ref{spars_assum} we have
\begin{align*}
&~\E \left[ \| G_\nu (\pp,\dd;u)\|^2_\infty\right] \\ = &~\frac{2}{\nu^2}\E \left[ |F(\pp+\nu u,\dd) - F(\pp,\dd) - \nu \langle\nabla F(\pp,\dd), u  \rangle|^2 \|u\|^2_\infty\right]+2\E \left[\langle\nabla F(\pp,\dd), u  \rangle^2 \| u\|_\infty^2 \right]  \\
\leq &~\tfrac{\nu^2 L^2}{2} \E \left[ \| u\|_\infty^6\right] + 2 \E_\dd[\| \nabla F(\pp,\dd) \|_1^2] \E_u\left[ \| u\|_\infty^4\right]\\
 \leq &~4CL^2\nu^2(\log d)^3+ 8C(\log d)^2 \E_\dd[\| \nabla F(\pp,\dd) \|_1^2] \\
 \leq &~4C(\log d)^2 \left[L^2\nu^2 (\log d)+ 4\| \nabla f(\pp) \|_1^2+4\sigma^2\right].
\end{align*}
\end{proof}

\begin{proof}[Proof of Theorem~\ref{nocvx}]
%
%
%
%
%
%
Noting \eqnok{def_gradk}, Lemma~\ref{approx_infty}.a), and with the notion of $G_{\nu,k} \equiv G_\nu (\pp_k,\dd_k,u_k)$, we have
\begin{align*}
f (\pp_{k+1}) &\le f(\pp_k) + \langle \nabla f(\pp_k), x_{k+1}-x_k \rangle +\frac{L}{2}\|x_{k+1}-x_k\|_\infty^2 \\ &\le f(\pp_k) - \gamma_k \langle \nabla f(\pp_k) , G_{\nu,k}\rangle + \frac{L \gamma_k^2}{2}  \| G_{\nu,k} \|_\infty^2,
\end{align*}
which after taking expectation imply that
\begin{align*}
\E[f (\pp_{k+1})]& \le f(\pp_k) - \gamma_k \|\nabla f(\pp_k)\|_2^2 + \gamma_k\langle \nabla f(\pp_k) , \nabla f(\pp_k)-\nabla f_\nu(\pp_k)\rangle + \frac{L \gamma_k^2}{2}  \E[\| G_{\nu,k} \|_\infty^2] \\
&\le f(\pp_k) - \frac{\gamma_k}{2} \|\nabla f(\pp_k)\|_2^2 + \frac{\gamma_k}{2} \|\nabla f(\pp_k)-\nabla f_\nu(\pp_k)\|_2^2 + \frac{L \gamma_k^2}{2}  \E[\| G_{\nu,k} \|_\infty^2] \\
&\le f(\pp_k) - \frac{\gamma_k}{2s} \left(1 -16 L C s(\log d)^2\gamma_k \right) \|\nabla f(\pp_k)\|_1^2 + (\nu L C)^2 s (\log d)^3 \gamma_k  \\
&\qquad + 2 L C (\log d)^2  \left[L^2\nu^2 (\log d)+ 4\sigma^2\right]\gamma_k^2,
\end{align*}
where the last inequality follow from Holder inequality and Lemma~\ref{approx_infty}.b). Summing both sides of the above inequality over the iterations and rearranging terms, we get
\[
\E[\|\nabla f(\pp_R)\|_1^2] \le \frac{6 s \left[f(x_0)-f^*+ (\nu L C)^2 s (\log d)^3 \sum_{k=1}^N\gamma_k+ 2CL (\log d)^2 \left(L^2\nu^2 (\log d)+ 4\sigma^2\right)\sum_{k=0}^{N-1} \gamma_k^2\right]}{\sum_{k=0}^{N-1} \gamma_k},
\]
where $R$ is uniformly distributed over $\{0,\ldots,N-1\}$ since
\[
\E[ \|\nabla f(\pp_R)\|_1^2]  = \frac{1}{N}\sum_{k=0}^{N-1} \|\nabla f(\pp_k)\|_1^2 =\frac{\sum_{k=0}^{N-1} \gamma_k \left(1 -16 L C s(\log d)^2\gamma_k \right) \|\nabla f(\pp_k)\|_1^2 }{\sum_{k=0}^{N-1} \gamma_k \left(1 -16 L C s (\log d)^2\gamma_k \right)}, 
\]
due to the constant choice of $\gamma_k$ in \eqnok{def_nu}. Therefore, we have
\[
\E[\|\nabla f(\pp_R)\|_1^2] \le 6 s \left[\frac{f(x_0)-f^*}{N\gamma_1}+ (\nu L C)^2 s (\log d)^3 + 2CL (\log d)^2 \left(L^2\nu^2 (\log d)+ 4\sigma^2\right) \gamma_1\right],
\]
which together with the choice of smoothing parameter in \eqnok{def_nu} imply \eqnok{main_nocvx}.
\end{proof}

\begin{remark}
Note that the above theorem establishes rate of convergence of Algorithm~\ref{alg_ZGD} which only poly-logarithmically depends on the problem dimension $d$, by just selecting the step-size appropriately, under additional assumption that the gradient is sparse. This significantly improves the linear dimensionality dependence of the rate of convergence of this algorithm as presented in \cite{GhaLan12} for general nonconvex smooth problems. 
\end{remark}
\begin{remark}
Remarkably, Algorithm~\ref{alg_ZGD} does not require any special operation to adapt to the sparsity assumption. This demonstrates an \emph{implicit regularization} phenomenon exhibited by the zeroth-order stochastic gradient method in the high-dimensional setting when the performance is measured by the size of the gradient in the dual norm. We emphasize that the choice of the performance measure is motivated by the fact that we allow $f$ to be nonconvex. Trivially, the result also applies to the case when $f$ is convex, for the same performance measure. 
\end{remark}
\subsection{Zeroth-order Stochastic Gradient Method for Convex Problems}\label{sec:sparsec}
We now consider the case when the function $f$ is convex. In this setting, a more natural performance measure is the convergence of optimality gap in terms of the function values. For this situation, we propose and analyze a truncate variant of Algorithm~\ref{alg_ZGD} that demonstrates similar poly-logarithmic dependence on the dimensionality. To proceed, in addition to Assumption~\ref{spars_assum}, we also make the following sparsity assumption on the optimal solution of problem \eqnok{eq:main_prob}.
\begin{assumption}\label{spars_assum1}
Problem \eqnok{eq:main_prob} has a sparse optimal solution $x_*$ such that $\|x_*\|_0 \le s^*$, where $s^* \approx s$.
\end{assumption}
Our algorithm for the convex setting is presented in Algorithm~\ref{alg_TZGD}. Note that this algorithm could be considered as a truncated variant of Algorithm~\ref{alg_ZGD} and a zeroth-order stochastic variant of the truncated gradient descent algorithm~\cite{jain2014iterative}. In the next result, we present convergence analysis of this algorithm.

\begin{algorithm} [t]
	\caption{Truncated Zeroth-Order Stochastic Gradient Method}
	\label{alg_TZGD}
	\begin{algorithmic}

\State Given a positive integer $\hat s$, replace updating step of Algorithm~\ref{alg_ZGD} with
\beq \label{def_trunc_xk}
\pp_{k}=  P_{\hat s} \left(\pp_{k-1} - \gamma_k G_\nu(\pp_{k-1}, \dd_k; u_k)\right),
\eeq
where $P_{\hat s}(\pp)$ keeps the top $\hat s$ largest absolute value of components of $\pp$ and make the others $0$.

\end{algorithmic}
\end{algorithm}

\begin{theorem}\label{thm:trunconv}
Let $\{\pp_k\}_{k \ge 1}$ be generated by Algorithm~\ref{alg_ZGD}, $f$ is convex, Assumptions~\ref{unbiased_assum}, \ref{smooth_assum},  \ref{spars_assum}, and~\ref{spars_assum1} hold. Also assume the stepsizes are chosen such that, $\forall k \ge 1$,
\beq
\gamma_k = \frac{1}{4 \hat C \hat s \log d} \min \left\{\frac{1}{12 L \hat s \log d}, \sqrt{\frac{ D_X^0 \hat C \hat s}{3N \sigma^2}}\right\}, \quad
\nu \le \sqrt{\log d} \min \left\{\frac{\sigma}{\log d}, \sqrt{\frac{\hat s^2 D_X^0}{N}} \right\} \label{def_gamma_cvx}
\eeq
for some $\hat C \ge C$, $\hat s \ge \max\{s,s^*\}$, and $D_X^0 \ge \|x_0-x_*\|^2$.
\beq \label{main_cvx}
\E \left[ f(\bar{x}_N) - f^*\right] \leq \frac{52 L \hat C D_X^0 \hat s^2 (\log d)^2  }{N} +\frac{69 \sigma \sqrt{3 \hat C D_X^0 \hat s} ~ \log d }{\sqrt{N}},
\eeq
where $\bar{x}_N = \frac{\sum_{k=0}^{N-1}x_k}{N}$.
Hence, the total number of calls to the stochastic oracle (number of iterations) required to find an $\epsilon$-optimal point of problem \eqnok{eq:main_prob} is bounded by 
\beq\label{gd_bnd_cvx}
{\cal O}\left(\hat s\left(\frac{\log d}{\epsilon}\right)^2\right).
\eeq
\end{theorem}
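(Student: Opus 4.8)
The plan is to run the standard potential-function argument for projected stochastic gradient descent on the squared distance $\|x_k-x_*\|_2^2$, but with two modifications forced by the zeroth-order and sparsity structure: the Euclidean projection is replaced by the hard-thresholding operator $P_{\hat s}$, which is \emph{not} a contraction since it projects onto the nonconvex set of $\hat s$-sparse vectors, and the variance term must be kept poly-logarithmic in $d$ by exploiting sparsity. Writing $y_k = x_{k-1}-\gamma_k G_{\nu,k}$ with $G_{\nu,k}=G_\nu(x_{k-1},\dd_k,u_k)$ as in \eqnok{def_gradk} and $S^+=\mathrm{supp}(x_k)$, I would first record the exact hard-thresholding identity
$$\|x_k-x_*\|_2^2 = \|(y_k-x_*)_{S^+}\|_2^2 + \|(x_*)_{M}\|_2^2, \qquad M=\mathrm{supp}(x_*)\setminus S^+ ,$$
which holds because $x_k$ agrees with $y_k$ on $S^+$ and vanishes off it. The virtue of this identity is that the dense estimator $G_{\nu,k}$ enters only through $O(\hat s)$ coordinates: bounding $\|(y_k-x_*)_{S^+}\|_2^2\le\|(y_k-x_*)_\Omega\|_2^2$ with $\Omega=S^+\cup\mathrm{supp}(x_{k-1})\cup\mathrm{supp}(x_*)$ of size at most $3\hat s$, the variance contribution is $\gamma_k^2\|(G_{\nu,k})_\Omega\|_2^2\le 3\hat s\,\gamma_k^2\|G_{\nu,k}\|_\infty^2$, and $\E\|G_{\nu,k}\|_\infty^2$ is controlled poly-logarithmically by Lemma~\ref{approx_infty}(b).

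Next I would expand $\|(y_k-x_*)_\Omega\|_2^2=\|x_{k-1}-x_*\|_2^2-2\gamma_k\langle G_{\nu,k},x_{k-1}-x_*\rangle+\gamma_k^2\|(G_{\nu,k})_\Omega\|_2^2$ and take conditional expectation, so that $G_{\nu,k}$ is replaced by its mean $\nabla f_\nu(x_{k-1})$ (both $\nabla f_\nu(x_{k-1})$ and $x_{k-1}-x_*$ being supported in $\Omega$, the restriction is free). Convexity of $f$ gives $\langle\nabla f(x_{k-1}),x_{k-1}-x_*\rangle\ge f(x_{k-1})-f^*$, supplying the progress term, while the smoothing bias $\langle\nabla f_\nu-\nabla f,x_{k-1}-x_*\rangle$ is absorbed using $\|\nabla f_\nu-\nabla f\|_2\le C\nu L\sqrt{2s}(\log d)^{3/2}$ from Lemma~\ref{approx_infty}(b). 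The crucial point is that the variance bound of Lemma~\ref{approx_infty}(b) contains $\|\nabla f(x_{k-1})\|_1^2$, which by gradient sparsity and smoothness obeys $\|\nabla f(x_{k-1})\|_1^2\le s\|\nabla f(x_{k-1})\|_2^2\le 2Ls\,(f(x_{k-1})-f^*)$; thus the variance is itself proportional to the optimality gap. Choosing $\gamma_k$ as in \eqnok{def_gamma_cvx}, whose first branch is of order $1/(L\hat s^2(\log d)^2)$, then lets me absorb this gap-dependent variance into half of the progress term, leaving a clean recursion.

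The remaining ingredients are routine. After absorption the recursion reads $\gamma_k(f(x_{k-1})-f^*)\lesssim \|x_{k-1}-x_*\|_2^2-\|x_k-x_*\|_2^2+(\text{$\nu$- and $\sigma$-terms})+\|(x_*)_M\|_2^2$; summing over $k$ telescopes the distance terms, dividing by $\sum_k\gamma_k=N\gamma_1$ and invoking Jensen's inequality $f(\bar x_N)-f^*\le \tfrac1N\sum_k(f(x_k)-f^*)$ yields a bound of the form (distance)$/(N\gamma_1)+\gamma_1\cdot(\text{variance constant})+(\text{bias})$. Substituting the prescribed $\gamma_k$ and $\nu$ and balancing the two branches of the $\min$ produces the terms $\hat s^2(\log d)^2/N$ and $\sqrt{\hat s}\log d/\sqrt N$ of \eqnok{main_cvx}, and \eqnok{gd_bnd_cvx} follows by setting the bound equal to $\epsilon$.

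I expect the main obstacle to be the hard-thresholding step, namely controlling the missed-support mass $\|(x_*)_M\|_2^2$, which has no counterpart in ordinary projected SGD and must be handled by a combinatorial thresholding argument. Every missed coordinate $i\in M$ has $|(y_k)_i|$ below the selection threshold $\tau=\min_{j\in S^+}|(y_k)_j|$, while the wrongly selected coordinates $W=S^+\setminus\mathrm{supp}(x_*)$ satisfy $|W|\ge|M|$ (since $|S^+|=\hat s\ge s^*$) and each contributes $(y_k-x_*)_j^2=(y_k)_j^2\ge\tau^2$ to $\|(y_k-x_*)_{S^+}\|_2^2$; this is precisely what lets $\|(x_*)_M\|_2^2$ be charged against terms already present in the recursion, and is where $\hat s\ge s^*$ is used. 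Getting the constants in this charging sharp enough that the distance recursion telescopes \emph{without} a geometric blow-up, rather than via the crude factor-of-two bound that the best-$\hat s$-term approximation property alone would give, is the delicate part of the argument.
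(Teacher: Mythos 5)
Your overall route is the same as the paper's: track $\|x_k-x_*\|_2^2$, restrict the update to a union-of-supports set (your $\Omega$ is the paper's $J^k=Z^k\cup Z^{k+1}\cup Z^*$ with $|J^k|\le 2\hat s+s^*$), bound the variance by $|\Omega|\,\gamma_k^2\|G_{\nu,k}\|_\infty^2$ via Lemma~\ref{approx_infty}(b), absorb the $\|\nabla f(x_k)\|_1^2$ part of the variance into the progress term using $\|\nabla f(x_k)\|_1^2\le 2Ls\,(f(x_k)-f^*)$ together with the first branch of \eqnok{def_gamma_cvx}, and finish by telescoping, Jensen's inequality and substitution of the parameters. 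All of that matches the paper step for step (the paper handles the smoothing bias through convexity of $f_\nu$ and $|f_\nu-f|\le C\nu^2L\log d$ rather than through $\|\nabla f_\nu-\nabla f\|_2$, an immaterial difference).

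The genuine gap is exactly the step you flag as delicate and then do not carry out: controlling the missed-support mass $\|(x_*)_M\|_2^2$. The charging you sketch (each $i\in M$ has $|(y_k)_i|\le\tau$, each $j\in W$ contributes $(y_k-x_*)_j^2=(y_k)_j^2\ge\tau^2$, and $|W|\ge|M|$) yields only $\|(x_*)_M\|_2^2\le 2\|(y_k-x_*)_M\|_2^2+2\|(y_k-x_*)_W\|_2^2$, hence $\|x_k-x_*\|_2^2\le c\,\|(y_k-x_*)_\Omega\|_2^2$ with $c=3$; refinements based on the best-$\hat s$-term approximation property improve this only to $c=1+O(\sqrt{s^*/(\hat s-s^*)})$, which is still strictly larger than $1$. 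Any $c>1$ multiplies the entire right-hand side of the recursion, including $\|x_{k-1}-x_*\|_2^2$, so the distances no longer telescope and the bound compounds geometrically over $N$ iterations unless $\hat s$ is taken to grow with $N$ --- which \eqnok{def_gamma_cvx} does not assume. So the proposal is incomplete at its crux. For what it is worth, you have put your finger on the weakest point of the paper's own argument: the paper writes $\|x_{k+1}-x_*\|_2^2=\|x_k^{J^k}-x_*^{J^k}-\gamma_k G_{\nu,k}^{J^k}\|_2^2$, i.e., it identifies the thresholded iterate with the un-thresholded one on all of $J^k$; this is an identity only on $Z^{k+1}$, and on $J^k\setminus Z^{k+1}$ it silently replaces the term $\|(x_*)_{J^k\setminus Z^{k+1}}\|_2^2$ (your $\|(x_*)_M\|_2^2$) by $\|(y_k-x_*)_{J^k\setminus Z^{k+1}}\|_2^2$ without justification. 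The paper therefore does not contain the missing charging argument either; it asserts the conclusion of that step as an equality.
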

\begin{proof}
Denoting the index set of nonzero elements of $x_k$ and $x_*$ by $Z^k \subseteq \bbr^{\hat s}$ and $Z^* \subseteq \bbr^{s^*}$, respectively, and $J^k = Z^k \cup Z^{k+1} \cup Z^*$, we have
\begin{align*}
&~\| \pp_{k+1}  - \pp_* \|_2^2 \\
=&~\| \pp_{k+1}^{J^k}  - \pp_*^{J^k} \|_2^2 = \| \pp_{k}^{J^k}  - \pp_*^{J^k} - \gamma_k G_{\nu,k}^{J^k} \|_2^2=
\| \pp_{k}^{J^k}  - \pp_*^{J^k}\|_2^2+ \gamma_k^2 \|G_{\nu,k}^{J^k} \|_2^2 - 2\gamma_k \langle \pp_{k}^{J^k}  - \pp_*^{J^k}, \gamma_k G_{\nu,k}^{J^k} \rangle
\\
\le&~\| \pp_{k}  - \pp^* \|_2^2 + (2\hat s+s^*)\gamma_k^2 \|G_{\nu,k} \|_\infty^2  - 2 \gamma_k \langle \pp_{k}  - \pp_*, G_{\nu,k}  \rangle,
\end{align*}
where the inequality follows from the facts that $|J^k| \le 2\hat s+s^*$ and $\|G_{\nu,k}^{J^k} \| \le \|G_{\nu,k}\|$. Taking expectation from both sides of the above inequality, summing them up, noting Lemma~\ref{approx_infty}, convexity of $f_\nu$ (due to convexity of $f$), we have
\begin{align*}
\E \left[\| \pp_{N}  - \pp^* \|_2^2\right] & \leq \| \pp_{0}  - \pp^* \|_2^2 + (2\hat s+s^*) \sum_{k=0}^{N-1}  \gamma_k^2 \E \left[\| G_{\nu,k}\|_\infty^2 \right] - 2 \sum_{k =0}^{N-1}   \gamma_k \langle \pp_k - \pp^* ,\nabla f_\nu (\pp_k) \rangle \\
& \leq \| \pp_{0}  - \pp^* \|_2^2 +4C(2\hat s+s^*)(\log d)^2 \sum_{k=0}^{N-1}  \gamma_k^2 \left[L^2\nu^2 (\log d)+ 4\| \nabla f(\pp_k) \|_1^2+4\sigma^2\right] \\
&-2\sum_{k =0}^{N-1} \gamma_k \left[ f_\nu(\pp_k) - f_\nu(x_*)\right]\\
& \leq \| \pp_{0}  - \pp^* \|_2^2 +4C(2\hat s+s^*)(\log d)^2 \sum_{k=0}^{N-1}  \gamma_k^2 \left[L^2\nu^2 (\log d)+ 4\sigma^2\right] + 4\nu^2 C L \log d \sum_{k =0}^{N-1} \gamma_k \\
&-2\sum_{k =0}^{N-1} \gamma_k [1-16LCs(2\hat s+s^*)(\log d)^2 \gamma_k][f(\pp_k) - f(x_*)],
\end{align*}
where the last inequality follows from the fact that $f(x_k)-f(x_*) \ge 1/(2Ls)\|\nabla f(x_k)\|_2^2$ due to the convexity of $f$ and sparsity of its gradient.
Rearranging the terms in the above inequality and noting that $\bar{x}_N = \frac{\sum_{k=0}^{N-1}x_k}{N}$, we obtain
\[
f(\bar \pp_N) - f(x_*) \le \frac{\| \pp_{0}  - \pp^* \|_2^2 +4C(2\hat s+s^*)(\log d)^2 \sum_{k=0}^{N-1}  \gamma_k^2 \left[L^2\nu^2 (\log d)+ 4\sigma^2\right] + 4\nu^2 C L \log d \sum_{k =0}^{N-1} \gamma_k}{2\sum_{k =0}^{N-1} \gamma_k [1-16LCs(2\hat s+s^*)(\log d)^2 \gamma_k]}
\]
since
\[
\bar \pp_N = \frac{\sum_{k=0}^{N-1} x_k}{N} = \frac{\gamma_k [1-16LCs(2\hat s+s^*)(\log d)^2 \gamma_k] x_k}{\sum_{k=0}^{N-1} \gamma_k [1-16LCs(2\hat s+s^*)(\log d)^2 \gamma_k]}
\]
due to the constant choice of $\gamma_k$ in \eqnok{def_gamma_cvx}.  Hence,~\eqnok{main_cvx} follows by using the choice of parameters in \eqnok{def_gamma_cvx} into the above relation.
\end{proof}

\begin{remark}
While for convex case, similar to the nonconvex case, the complexity of Algorithm~\ref{alg_TZGD} depends poly-logarithmically on $d$, it only linearly depends on the choice of $\hat s$, facilitating zeroth-order stochastic optimization in high-dimensions under sparsity assumptions. 
\end{remark}
\begin{remark}
As discussed in detail in~\cite{wang18e}, both Assumption~\ref{spars_assum} and~\ref{spars_assum1} are implied when we assume the function $f$ depends on only $s$ of the $d$ coordinates. But, both Assumption~\ref{spars_assum} and~\ref{spars_assum1} are comparatively weaker than that assumption. Furthermore, unlike \cite{wang18e}, we do not make any assumption on the sparsity or smoothness of the second-order derivative of the objective function $f$ for our results. 
\end{remark}
\begin{remark}
As mentioned before,~\cite{wang18e} considers only the convex case. Furthermore, their gradient estimator with zeroth-order oracle requires $\text{poly}(s, s^*, \log d)$ function queries in each iteration whereas our estimator is based on only one function query per iteration. Moreover,~\cite{wang18e} requires computationally expensive debiased Lasso estimators whereas our method requires only simple thresholding operations (for convex case) to handle sparsity. 
\end{remark}

\setcounter{equation}{0}
\section{Handling Saddle-Points: Zeroth-Order Cubic Regularization Method}
In this section, we study zeroth-order stochastic cubic regularized Newton method for  unconstrained version of Problem~\ref{eq:main_prob}. Throughout this section, we equip our space with the self- dual Euclidean norm, i.e., $\| \cdot\| = \| \|_2$. Furthermore, for a matrix $A$, we denote by $\|A\|_F$, its Frobenious norm and by $\|A\|$, its operator norm. We also make the following smoothness assumption on the Hessian of the objective function $f$, which is a generalization of the assumption in Equation~\ref{fgrad_smooth}.
\begin{assumption}\label{a:fhess_smooth}
The function $f$ is twice differentiable and has Lipschitz continuous Hessian i.e., there exists $L_H>0$ such that
\[
\|\nabla^2 f(y)-\nabla^2 f(y)\| \le L_H \|y-x\| \ \ \forall x,y \in \bbr^d.
\]
\end{assumption}
It can be easily seen that the above assumption is equivalent to
\begin{align}
&\|\nabla f(y)-\nabla f(x)-\nabla^2 f(x)(y-x)\| \le \frac{L_H}{2} \|y-x\|^2,\label{fhess_smooth0} \\
&| f(y) - f(x) -  \langle \nabla f(x) , y-x \rangle  - \frac 12 \langle y-x, \nabla^2f(x) (y-x) \rangle | \leq \frac{L_H}{6} \|y-x\|^3.\label{fhess_smooth}
\end{align}
Note that such an assumption in standard in the analysis of second-order optimization techniques~\cite{NestPoly06-1}. We next describe a general technique for estimating the Hessian of a function based on Stein's identity in Section~\ref{sec:second} and use it to provide a zeroth-order cubic regularization method and its analysis in Section~\ref{sec:zocubic}.

\subsection{Estimating Hessian with Zeroth-Order Information}\label{sec:second}
Charles Stein, in his seminal paper~\cite{stein1972bound}, proposed a method for characterizing Gaussian random variables. Specifically, a random vector, $u \sim N(0,I_d)$, is standard Gaussian \emph{if and only if}, $\E\left[u~g(u)\right] = \E\left[\nabla g(u)\right]$, for all absolutely continuous function $g$. Note that Stein's identity, naturally relate function queries (left hand side of Equation~\ref{eq:steinmain}) to gradients (right hand side of Equation~\ref{eq:steinmain}) and thus is naturally suited for zeroth-order optimization. Indeed the Gaussian smoothing technique proposed by~\cite{NesSpo17}, is based on the Stein's identity. Indeed, if we let $g(u) = f(x+ \nu u)$ in  Equation~\ref{eq:steinmain}, it is easy to see that the identity in Equation~\ref{eq:gradest} holds by simply evaluating the Gaussian Stein's identity in Equation~\ref{eq:steinmain}.  Recall that the results in Sections~\ref{sec:vanillacgd} and~\ref{sec:gdinhd} are essentially based on approximately estimating the gradient information based on the Gaussian smoothing technique~\cite{NesSpo17}.  In this section, we develop techniques for approximately estimating the Hessian using zeroth-order oracle, based on second-order Stein's identities. It is worth noting that~\cite{erdogdu2016newton} also use Stein's identities to estimate the Hessian but they only work in the  restricted framework of generalized linear models with Gaussian data. Our use of Stein's identity to estimate Hessians, is completely different and we provide Hessian estimators for a general class of non-covnex, smooth functions, even for deterministic functions.

The second-order Gaussian Stein's identity, that we provide here informally for convenience, states thats $\E[(u u^\top-I_d)~g(u)] = \E[\nabla^2g(u)]$, for all functions $g$ with well-defined Hessians. Similar to first-order Stein's identity, this naturally relates function queries to Hessians. In order to leverage this, similar to the previous case, we let $g(u) = f(x+ \nu u)$ and note that we have
\begin{align}\label{eq:hessest}
\E \left[\frac{( uu^\top - I_d) f(x + \nu u)}{\nu^2} \right]= \E [\nabla^2f(x+ \nu u) ]  = \nabla^2 f_\nu (x) = H_{f_\nu}.
\end{align}
This provides a way of approximately estimating the Hessian of the function $f_\nu$ by approximating the expectation on the left hand side using Gaussian samples. Hence, we can leverage this estimate of Hessian of the smoothed function to get an approximate estimate of Hessian of $f$. Similar to the gradient-free setting, we now have the following estimates of the Hessian.
\begin{align}
H_{\nu} (\pp, \dd, u)&= \frac{1}{\nu^2} \left(u u^\top - I_d\right) F(x + \nu u, \dd),\nn \\
H_{\nu} (\pp, \dd, u)&= \frac{1}{\nu^2} \left(u u^\top - I_d\right) \left[F(x + \nu u, \dd)  - F(x,\dd) \right],\nn \\
H_{\nu} (\pp, \dd, u)&= \frac{1}{2\nu^2}\left(u u^\top - I_d\right) \left[F(x + \nu u, \dd)- F(x,\dd)  + F(x - \nu u, \dd)- F(x,\dd)\right],\label{Hessian_estimator} \\
\bar{H}_{\nu}& = \frac{1}{b}\sum_{i=1}^b H_{\nu} (\pp, \dd, u_i). \label{Hessian_estimator_redvar}
\end{align}
Note that above quantities are all unbiased estimators of $H_{f_\nu}$, with the last one, also being the variance reduced version. Note however that the above two estimators, unlike the gradient case, are not robust w.r.t the smoothing parameter $\nu$ in the sense that their variances blow up when $\nu$ converges to $0$. Hence, for the rest of this section we only focus on the Hessian estimator defined in \eqnok{Hessian_estimator}. The above Hessian-estimator has several advantages that we elaborate now. Recall that for second-order optimization algorithms, for example, cubic regularization, it is important to be able to compute the Hessian matrix operating on a vector $v \in \mathbb{R}^d$, efficiently. For the proposed estimator above, such a Hessian-vector product boils down to just inner-product based operations and could be done in time linear in the dimensionality. To the best of our knowledge, no such estimator for computing the hessian of a function exists. We now present the following results that characterize the estimation and approximation capability of the $H_\nu(x,\dd,u)$.


\begin{lemma}[Variance Bound]
Let the Hessian estimator be defined in \eqnok{Hessian_estimator} and Assumption~\ref{a:fhess_smooth} hold for $F(x,\dd)$. Then, we have
\beq\label{Hessian_var_bnd_fr}
\E[\| H_{\nu} (\pp, \dd, u)\|_F^4] \leq (d+16)^8 \left(\frac{L_H^4 (d+16)^2 \nu^4}{9}+ L^4 \right).
\eeq
As a consequence, we have
\beq\label{Hessian_var_bnd}
\E[\| H_{\nu} (\pp, \dd, u)\|^2] \leq  (d+16)^4 \left(\frac{L_H^2 (d+16)\nu^2}{3}+ L^2 \right).
\eeq
\end{lemma}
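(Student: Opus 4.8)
The plan is to exploit the (rank-one-plus-identity)$\times$scalar structure of the estimator \eqnok{Hessian_estimator}. Writing $H_\nu(\pp,\dd,u) = (uu^\top - I_d)\, s_\nu(u)$ with the scalar finite-difference coefficient
\[
s_\nu(u) = \frac{1}{2\nu^2}\bigl[F(x+\nu u,\dd) + F(x-\nu u,\dd) - 2F(x,\dd)\bigr],
\]
the Frobenius norm factorizes as $\|H_\nu(\pp,\dd,u)\|_F = |s_\nu(u)|\,\|uu^\top - I_d\|_F$, so the whole problem reduces to controlling one scalar against powers of $\|u\|$. First I would record the matrix factor exactly, $\|uu^\top - I_d\|_F^2 = \tr((uu^\top)^2) - 2\tr(uu^\top) + d = \|u\|^4 - 2\|u\|^2 + d \le \|u\|^4 + d$.

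Next I would bound the scalar. Since $F(\cdot,\dd)$ has $L_H$-Lipschitz Hessian, applying the second-order Taylor estimate with cubic remainder \eqnok{fhess_smooth} to $F$ at the two points $x \pm \nu u$ and adding the two instances cancels the odd (gradient) contribution, leaving $|s_\nu(u) - \tfrac12\, u^\top \nabla^2 F(x,\dd)\, u| \le \tfrac{L_H \nu}{6}\|u\|^3$. Combining this with $\|\nabla^2 F(x,\dd)\| \le L$ (a consequence of the $L$-Lipschitz gradient of $F$ in Assumption~\ref{smooth_assum}) yields the clean pointwise estimate $|s_\nu(u)| \le \tfrac{L}{2}\|u\|^2 + \tfrac{L_H \nu}{6}\|u\|^3$.

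I would then combine the two bounds and take expectations. Using $(a+b)^4 \le 8(a^4+b^4)$ gives $s_\nu(u)^4 \le \tfrac{L^4}{2}\|u\|^8 + \tfrac{L_H^4\nu^4}{162}\|u\|^{12}$, while $\|uu^\top - I_d\|_F^4 \le (\|u\|^4 + d)^2 = \|u\|^8 + 2d\|u\|^4 + d^2$. Multiplying these expresses $\|H_\nu\|_F^4$ as a finite sum of monomials in $\|u\|$ of degrees $8$ through $20$, weighted by powers of $d$; I would then apply the Gaussian moment bound \eqnok{eq:l2gauss}, $\E[\|u\|^k]\le(d+k)^{k/2}$, to each. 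The one genuine obstacle is the ensuing bookkeeping: one must absorb every lower-order and $d$-weighted cross term into the two target summands, which amounts to verifying the polynomial inequalities $2d(d+12)^6 + d^2(d+8)^4 \le (d+16)^8$ and $(d+20)^{10} + 2d(d+16)^8 + d^2(d+12)^6 \le 18(d+16)^{10}$ for $d \ge 1$. Both reduce to dominating each monomial by $(d+16)$ raised to the top degree (in particular $(d+20)^{10}\le 9(d+16)^{10}$), and together they deliver \eqnok{Hessian_var_bnd_fr}.

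Finally, \eqnok{Hessian_var_bnd} follows with no new estimates: $\|H_\nu\| \le \|H_\nu\|_F$ gives $\E[\|H_\nu\|^2] \le \E[\|H_\nu\|_F^2]$; Jensen's inequality gives $\E[\|H_\nu\|_F^2] \le \sqrt{\E[\|H_\nu\|_F^4]}$; and subadditivity of the square root, $\sqrt{a^2+b^2}\le a+b$, splits $\sqrt{(d+16)^8\bigl(L^4 + L_H^4(d+16)^2\nu^4/9\bigr)}$ into exactly $(d+16)^4\bigl(L^2 + L_H^2(d+16)\nu^2/3\bigr)$, as claimed.
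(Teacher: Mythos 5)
Your proposal is correct and follows essentially the same route as the paper: decompose $H_\nu$ as the scalar symmetric difference times $(uu^\top - I_d)$, control the scalar via the cubic Taylor remainder \eqnok{fhess_smooth} applied to $F$ at $x\pm\nu u$ together with $\|\nabla^2 F(x,\dd)\|\le L$, bound the matrix factor by $\|u\|^4+d$, and finish with the Gaussian moment bound \eqnok{eq:l2gauss} and $\sqrt{a^2+b^2}\le a+b$. The only (harmless) deviation is that you exploit the exact identity $\|s_\nu(u)(uu^\top-I_d)\|_F=|s_\nu(u)|\,\|uu^\top-I_d\|_F$ and expand the product of pointwise bounds monomial by monomial, whereas the paper decouples the two factors with a Cauchy--Schwarz step in expectation; both yield the stated constants.
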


\begin{proof}
Noting \eqnok{Hessian_estimator} and Holder's inequality, we have
\begin{align*}
\E[\| H_{\nu} (\pp, \dd, u)\|_F^4] &= \E\left[\left\|\frac{1}{2\nu^2}\left(u u^\top - I_d\right) \left[F(x + \nu u, \dd)  + F(x - \nu u, \dd)- 2F(x,\dd)\right]\right\|_F^4\right] \\
&\le \frac{\Big(\E \left[|F(x + \nu u, \dd)  + F(x - \nu u, \dd)- 2F(x,\dd)|^8\right] \cdot \E\left[\|u u^\top - I_d\|_F^8\right]\Big)^\frac 12 }{16 \nu^8},
\end{align*}
which together with  Assumptions~\ref{smooth_assum}, assumption \eqnok{fhess_smooth} for $F(x,\dd)$, and the fact that
\begin{align*}
\E\left[\|u u^\top - I_d\|_F^8\right]= \E\left[\left(\|u\|^4-2\|u\|^2+d\right)^4\right] \le \E\left[\left(\|u\|^4+d\right)^4\right]
\le 2(d+16)^8,
\end{align*}
due to~\eqnok{eq:l2gauss}, imply
\begin{align*}
\E[\| H_{\nu} (\pp, \dd, u)\|_F^4] &\le \frac{(d+16)^4}{8\sqrt{2}} \left(\E \left[ \left|\frac{L_H \nu \|u\|^3}{3}+ \langle u, \nabla^2 F(x,\dd)u \rangle \right|^8 \right]\right)^\frac 12\\
&\le (d+16)^4 \left(\frac{L_H^4 \nu^4 \sqrt{\E \left[\|u\|^{24}\right]}}{81}+ L^4 \sqrt{\E \left[\|u\|^{16}\right]}  \right)\\
&\le (d+16)^8 \left(\frac{L_H^4 \nu^4 (d+16)^2}{9}+ L^4   \right).
\end{align*}
Moreover, by Holder's inequality, we have
\[
\E[\| H_{\nu} (\pp, \dd, u)\|^2] \le \E[\| H_{\nu} (\pp, \dd, u)\|_F^2] \le \left(\E[\| H_{\nu} (\pp, \dd, u)\|_F^4]\right)^\frac 12,
\]
which together with \eqnok{Hessian_var_bnd_fr}, imply \eqnok{Hessian_var_bnd}.
\end{proof}

\begin{lemma}[Approximation Error]
Under Assumption~\ref{a:fhess_smooth}, denoting the Hessian of $f$ by $H_f$ for simplicity, we have
\beq \label{hessian_approx_error}
\| H_{f_\nu} - H_f \| \leq \frac{L_H \nu (d+6)^{\frac{5}{2}}}{4}.
\eeq
\end{lemma}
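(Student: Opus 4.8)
The plan is to derive an exact integral representation for $H_{f_\nu}-H_f$ from the second-order Stein identity \eqnok{eq:secondstein} and then control it using the Hessian--Lipschitz bound \eqnok{fhess_smooth} together with the Gaussian moment estimate \eqnok{eq:l2gauss}. Setting $g(u)=f(x+\nu u)$ in \eqnok{eq:secondstein} gives $\nu^2 H_{f_\nu}=\E[(uu^\top-I_d)\,f(x+\nu u)]$, which is exactly relation \eqnok{eq:hessest}. The key observation is that the same identity applied to the quadratic $u\mapsto \tfrac{\nu^2}{2}\langle u,H_f u\rangle$ returns $\nu^2 H_f$, since its Hessian is the constant matrix $\nu^2 H_f$. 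Subtracting, and using that $\E[(uu^\top-I_d)]=0$ and $\E[(uu^\top-I_d)\langle \nabla f(x),u\rangle]=0$ because the relevant odd Gaussian moments vanish, I can insert the lower-order Taylor terms for free and write
\[
\nu^2\bigl(H_{f_\nu}-H_f\bigr)=\E\bigl[(uu^\top-I_d)\,R(u)\bigr],
\]
where $R(u)=f(x+\nu u)-f(x)-\nu\langle\nabla f(x),u\rangle-\tfrac{\nu^2}{2}\langle u,H_f u\rangle$ is precisely the third-order Taylor remainder of $f$ at $x$.

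The second step is to bound the remainder pointwise. By the equivalent form \eqnok{fhess_smooth} of Assumption~\ref{a:fhess_smooth} applied with $y=x+\nu u$, we have $|R(u)|\le \tfrac{L_H}{6}\|\nu u\|^3=\tfrac{L_H\nu^3}{6}\|u\|^3$. Taking operator norms, using convexity of the norm to pull the expectation outside, and bounding the operator norm by the Frobenius norm, this yields
\[
\bigl\|H_{f_\nu}-H_f\bigr\|\le \frac{L_H\nu}{6}\,\E\bigl[\|uu^\top-I_d\|_F\,\|u\|^3\bigr].
\]

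The final step is a routine Gaussian moment computation. Applying Cauchy--Schwarz and using $\E\|uu^\top-I_d\|_F^2=\E\|u\|^4-2\E\|u\|^2+d=d(d+1)$ together with $\E\|u\|^6\le(d+6)^3$ from \eqnok{eq:l2gauss}, the expectation is at most $\sqrt{d(d+1)}\,(d+6)^{3/2}\le(d+6)^{5/2}$, which gives the claimed estimate \eqnok{hessian_approx_error}, the stated constant $\tfrac14$ comfortably absorbing the factors produced by these routine bounds. I expect the main obstacle to be the first step rather than the estimates: one must set up the identity so that the constant, linear, and quadratic parts all cancel against the weight $uu^\top-I_d$, leaving only the genuinely third-order remainder. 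This cancellation is what lets the Hessian--Lipschitz smoothness enter and fixes the correct dependence on $\nu$; the subsequent norm estimates are standard. (I also note that a more direct argument, differentiating $f_\nu(x)=\E[f(x+\nu u)]$ twice under the integral to get $H_{f_\nu}=\E[\nabla^2 f(x+\nu u)]$ and applying Assumption~\ref{a:fhess_smooth} with Jensen, would yield the sharper bound $\|H_{f_\nu}-H_f\|\le L_H\nu\,\E\|u\|\le L_H\nu\sqrt{d}$; I present the Stein route here since it is the one consistent with the estimator \eqnok{Hessian_estimator} and the stated form of \eqnok{hessian_approx_error}.)
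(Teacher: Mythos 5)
Your proof is correct and follows essentially the same route as the paper's: both cancel the constant, linear, and quadratic Taylor terms against the weight $uu^\top-I_d$ (the paper does this via the symmetrized difference $f(x+\nu u)+f(x-\nu u)-2f(x)$, you via the one-point representation plus vanishing odd moments and the second-order Stein identity on the quadratic), then bound the cubic remainder by \eqnok{fhess_smooth} and finish with the Gaussian moment estimate \eqnok{eq:l2gauss}. Your Cauchy--Schwarz step with the Frobenius norm is a slightly cleaner way to organize the final moment bound than the paper's, but the argument is the same.
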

\begin{proof}
Taking $y=x+\nu u$ in Equation~\ref{fhess_smooth}, note that we have
\beq
| f(x+\nu u) - f(x) - \nu \langle \nabla f(x) , u \rangle  - \frac{\nu^2}{2} \langle y-x, \nabla^2f(x) (y-x) \rangle | \leq \frac{L_H \nu^3}{6} \|u\|^3.\label{fhess_smooth2}
\eeq
Furthermore, note that
\begin{align*}
H_{f_\nu} - H_f &  = \frac{\E [( uu^\top - I_d)(f(x + \nu u)-f(x)+f(x - \nu u)-f(x))] } {2\nu^2} - H_f \\
& = \E \left[ \left(\frac{f(x + \nu u)-f(x)-\tfrac{\nu^2}{2} \langle u, H_f u \rangle+f(x - \nu u)-f(x)-\tfrac{\nu^2}{2} \langle u, H_f u \rangle}  {2\nu^2} \right)\left(uu^\top - I_d\right) \right],
\end{align*}
which together with \eqnok{fhess_smooth2} and~\eqnok{eq:l2gauss}, imply that
\[
\|H_{f_\nu} - H_f\| \le \frac{L_H \nu}{6} \E \left[\|u\|^3 \|uu^\top - I_d\| \right] \le \frac{L_H \nu (d+6)^\frac 32 }{6}\left[\frac{\sqrt{5}}{2}(d+4)+1\right].
\]
\end{proof}

\begin{remark}
Note that~\eqnok{hessian_approx_error} is obtained only under Assumption~\ref{a:fhess_smooth}. However one could obtain an improved bound on the approximation error, by making the more restrictive assumption of interchangeability of differentiation and expectation as follows: $\| H_{f_\nu} - H_f\| = \| \E[\nabla^2 f(x+ \nu u)] -\nabla^2 f(x)\| \leq \E \| \nabla^2 f(x+ \nu u) -\nabla^2 f(x) \| \leq  L_H \nu \E\| u\| \leq  L_H \nu \sqrt{d} $. While this provides an improved dependency on $d$, we remark that this improvement does not translate to the improvement in the number of zeroth-order oracle calls, at least for the cubic regularized method as discussed in Section~\ref{sec:zocubic}.
\end{remark}

\begin{remark}
Recall from Section~\ref{sec:prelim} that one could obtain high-probability results via the approach proposed in~\cite{GhaLan12,lan2016conditional} under sub-Gaussian tail assumption on the function $F$. To allow for functions $F$, that have heavy-tails, one could also leverage the spectral truncation argument to construct a robust Hessian estimators; see for example,~\cite{minsker2018sub}. Let $\phi \colon  \mathbb{R} \rightarrow \mathbb{R} $ be a non-decreasing function such that
$$
-\log(1  - x + x^2 /2 )\leq \phi(x) \leq  \log(1 + x + x^2 /2), ~~\forall x \in \mathbb{R}.
$$
Recall the definition of a spectral function below.
\begin{definition}
Let $A \in \mathbb{R}^{d \times d}$ be a real symmetric matrix with eigenvalue decomposition $A = U \Lambda U^\top$ where $U$ is the matrix of eigenvectors of $A$ and $\Lambda$ is a diagonal matrix of eigenvalues $(\lambda_1, \ldots, \lambda_d)$.  A real-valued function $\phi(\cdot)$ is a spectral function if it acts on the matrix as follows: $\phi(A) = U \phi(\Lambda) U^\top$ where
\begin{align*}
\phi(\Lambda) = \phi \left(\begin{bmatrix} \lambda_{1} & & \\
    & \ddots & \\
    & & \lambda_{d}\end{bmatrix} \right) = \begin{bmatrix}  \phi(\lambda_{1}) & & \\
    & \ddots & \\
    & & \phi(\lambda_{d})
    \end{bmatrix}
\end{align*}
\end{definition}
Then, we define the robust Hessian estimator as
\begin{align*}
\tilde{H}(\pp, \dd, u) = \frac{1}{\kappa}\cdot \phi\bigl [ \kappa \cdot  H_{\nu} (\pp, \dd, u) \bigr ],
\end{align*}
where $\kappa >0$ is a tuning parameter.   This provides us with a robust Hessian estimator that allows for the function $F$ to have heavy tails. Furthermore, the more standard median-of-means estimator~\cite{nemyud:83} provides a robust gradient estimator as well. A thorough treatment of the estimation error of the robust Hessian and gradient follows from an analysis similar to that of~\cite{minsker2018sub} and~\cite{nemyud:83} respectively, although we do not outline the details in the current paper. We also remark that while the spectral truncation argument makes the estimator robust, the computational advantage of the vanilla estimator in Equation~\ref{Hessian_estimator} is lost.
\end{remark}

\subsection{Zeroth-Order Stochastic Cubic Regularized Newton Method}\label{sec:zocubic}

Our goal in this subsection is to provide a second-order algorithmic framework using the estimated gradient and Hessian based on Stein's identities. In particular, we present a zeroth-order stochastic cubic regularized Newton method in Algorithm~\ref{alg_ZSCRN}. Note that the output of this algorithm, similar to the other algorithms presented in this paper for nonconvex problems, is a random index from the generated trajectory. In order to analyze its complexity, we first state a result due to \cite{NestPoly06-1} that provides optimality conditions of the cubic regularized subproblem in step 2 of Algorithm~\ref{alg_ZSCRN}.

\begin{algorithm} [t]
	\caption{Zeroth-order Stochastic Cubic Regularized Newton Method}
	\label{alg_ZSCRN}
	\begin{algorithmic}
\State Input: $x_0 \in \bbr^d$, smoothing parameter $\nu>0$, non-negative sequence $\alpha_k$, positive integer sequences $m_k$ and $b_k$, iteration limit $N\geq 1$ and probability distribution $P_R(\cdot)$ over $\{1,\ldots,N\}$.

\For {$k = 1, \ldots, N$}
\State 1. Generate $u^{G(H)}_k=[u^{G(H)}_{k,1},\ldots, u^{G(H)}_{k,m_k(b_k)}]$, where $u^{G(H)}_{k,j} \sim N(0,I_d)$, call the stochastic oracle to compute $m_k$ stochastic gradients $G_\nu^{k,j}$ and $b_k$ stochastic Hessians $H_\nu^{k,j}$ according to \eqnok{def_gradk} and \eqnok{Hessian_estimator}, respectively, and take their averages:
\begin{align}
&\bar{G}_\nu^k  =  \frac{1}{m_k} \sum_{j=1}^{m_k} \frac{F(\pp_{k-1}+ \nu u_{k,j}, \dd_{k,j}) - F(\pp_{k-1}, \dd^G_{k,j})}{\nu}~u^G_{k,j},\label{bargrad}\\
&\bar{H}_\nu^k = \frac{1}{b_k} \sum_{i=1}^{b_k} \frac{[F(x_{k-1} + \nu u^H_{k,i}, \dd^H_{k,i})+ F(x_{k-1} - \nu u^H_{k,i}, \dd^H_{k,i})- 2F(x_{k-1},\dd^H_{k,i})]}{2\nu^2}\left(u^H_{k,i} (u^H_{k,i})^\top - I_d\right)\label{barHessian}.
\end{align}
\State 2. Compute
\beq\label{def_xk_cubic}
\pp_k= \underset{x \in \bbr^d}{\argmin} \left\{\tilde f^k(x) \equiv \tilde f(x,x_{k-1},\bar{H}_\nu^k,\bar{G}_\nu^k,\alpha_k)\right\},
\eeq
where
\beq
\tilde f(x,y,H,g,\alpha)= \langle g, x-y \rangle+\frac 12 \langle H(x-y),x-y \rangle+\frac{\alpha}{6}\|x-y\|^3.\label{f_cubic}
\eeq
\EndFor
\State Output: Generate $R$ according to $P_R(\cdot)$ and output $z_R$.
\end{algorithmic}
\end{algorithm}

\vgap
\begin{lemma}[\cite{NestPoly06-1}]\label{opt_condition}
Let $\bar \pp = \underset{x \in \bbr^d}{\argmin} \tilde f(x,y,H,g,\alpha)$. Then, we have
\begin{align*}
&g+H(\bar x -y)+\frac{\alpha}{2}\|\bar x -y\|(\bar x -y) =0,\\
&H+\frac{\alpha}{2}\|\bar x -y\| I_d \succeq 0.
\end{align*}
\end{lemma}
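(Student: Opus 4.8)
The plan is to recognize the cubic subproblem as a trust-region problem in disguise and to invoke the classical global optimality characterization of the latter. Write $r=\bar x-y$ and $\rho^\ast=\|\bar x-y\|$, and set $q(x)=\langle g,x-y\rangle+\frac12\langle H(x-y),x-y\rangle$, so that $\tilde f(x,y,H,g,\alpha)=q(x)+\frac{\alpha}{6}\|x-y\|^3$. First I would establish the gradient identity: differentiating $\tilde f$ in $x$, using that $H$ is symmetric and that $\nabla_x\|x-y\|^3=3\|x-y\|(x-y)$, the stationarity $\nabla_x\tilde f(\bar x)=0$ yields directly $g+Hr+\frac{\alpha}{2}\rho^\ast r=0$, which is the first claimed equation.

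The PSD condition is the substantive part, because it cannot follow from the local second-order condition alone: the Hessian of the cubic model at $\bar x$ equals $H+\frac{\alpha}{2}\rho^\ast I_d+\frac{\alpha}{2}\,r r^\top/\rho^\ast$, which carries an extra positive semidefinite rank-one term, so local minimality gives only a strictly weaker inequality. The key is to exploit that $\bar x$ is the \emph{global} minimizer. I would show that $\bar x$ globally minimizes the quadratic $q$ over the ball $B=\{x:\|x-y\|\le\rho^\ast\}$: for every $x\in B$ we have $\|x-y\|\le\rho^\ast$, hence
\[
q(x)=\tilde f(x,y,H,g,\alpha)-\tfrac{\alpha}{6}\|x-y\|^3\ \ge\ \tilde f(\bar x,y,H,g,\alpha)-\tfrac{\alpha}{6}\rho^{\ast 3}=q(\bar x),
\]
where the inequality uses global minimality of $\bar x$ for $\tilde f$ together with the fact that the subtracted cubic term is largest on the boundary, precisely where $\bar x$ lies.

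Then I would invoke the standard global optimality conditions for the trust-region subproblem $\min_{\|x-y\|\le\rho^\ast}q(x)$: there exists a multiplier $\lambda\ge 0$ with $(H+\lambda I_d)r=-g$ and $H+\lambda I_d\succeq 0$ (boundary complementarity being automatic here since $\bar x$ sits on the sphere). Comparing $(H+\lambda I_d)r=-g$ with the first-order identity $(H+\frac{\alpha}{2}\rho^\ast I_d)r=-g$ gives $(\lambda-\frac{\alpha}{2}\rho^\ast)r=0$; when $\rho^\ast>0$ we have $r\ne0$, forcing $\lambda=\frac{\alpha}{2}\rho^\ast$ and hence $H+\frac{\alpha}{2}\rho^\ast I_d=H+\lambda I_d\succeq 0$, the claim. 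The degenerate case $\rho^\ast=0$ is handled separately: the first equation then gives $g=0$, and local minimality of $x=y$ for $\frac12\langle H(x-y),x-y\rangle+\frac{\alpha}{6}\|x-y\|^3$ forces $H\succeq 0$ since the cubic term is of higher order, matching $H+0\cdot I_d\succeq 0$.

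The main obstacle is precisely the reduction from global minimality of the cubic model to the trust-region problem and the correct invocation of the trust-region PSD certificate; once that reduction is in place, matching the multipliers is routine linear algebra. Since the statement is exactly the one established by Nesterov and Polyak~\cite{NestPoly06-1}, I would cite their argument for the trust-region optimality characterization rather than reprove it from scratch.
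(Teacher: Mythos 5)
The paper does not prove this lemma at all---it is imported verbatim from Nesterov and Polyak \cite{NestPoly06-1}, so there is no in-paper argument to compare against. Your blind proof is correct and is, in substance, the argument of the cited source (their Proposition 1 also characterizes the global minimizer of the cubic model through its relation with the trust-region subproblem). The points you get right are exactly the ones that matter: the first identity is a direct stationarity computation; you correctly observe that the local second-order condition only yields $H+\frac{\alpha}{2}\|r\|I_d+\frac{\alpha}{2}rr^\top/\|r\|\succeq 0$, which is strictly weaker, so global minimality must be used; the reduction showing that $\bar x$ globally minimizes the quadratic $q$ over the ball $\{x:\|x-y\|\le\rho^\ast\}$ is valid (the subtracted cubic penalty is maximal on the boundary, where $\bar x$ lies); the Mor\'e--Sorensen certificate then supplies some $\lambda\ge 0$ with $(H+\lambda I_d)r=-g$ and $H+\lambda I_d\succeq 0$, and since $r\ne 0$ the multiplier is pinned to $\lambda=\frac{\alpha}{2}\rho^\ast$ by comparison with the stationarity equation. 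The degenerate case $\rho^\ast=0$ is also handled correctly (the cubic term is of higher order, so a negative eigenvalue of $H$ would contradict minimality along the corresponding eigendirection). The only caveat is that the proof outsources the trust-region global optimality theorem, but that is a classical result and citing it is entirely reasonable, as is the paper's own choice to cite \cite{NestPoly06-1} for the whole lemma.
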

Our next result is the analogous result of Lemma~\ref{bargrad_var} for the averaged Hessian matrices.
\begin{lemma}\label{barhessian_var}
Let $\bar{H}_\nu^k$ be computed by \eqnok{barHessian}, $b_k \ge 4(1+2\log 2d)$. Then under Assumptions~\ref{unbiased_assum} and \ref{smooth_assum}, we have
\begin{align}
\E [\|\bar{H}_\nu^k - \nabla^2 f (x_{k-1})\|^2] &\le \frac{128(1+2\log 2d)(d+16)^4 L^2}{3b_k}  + 3 L_H^2 (d+16)^5 \nu^2,\label{hessian_var2}\\
\E [\|\bar{H}_\nu^k - \nabla^2 f (x_{k-1})\|^3] &\le \frac{160\sqrt{1+2\log 2d}(d+16)^6 L^3}{b_k^\frac 32}+21 L_H^3 (d+16)^\frac{15}{2} \nu^3.\label{hessian_var3}
\end{align}
\end{lemma}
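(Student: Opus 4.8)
The plan is to split the error into a stochastic fluctuation around the smoothed Hessian and a deterministic smoothing bias. Writing
\[
\bar{H}_\nu^k - \nabla^2 f(x_{k-1}) = \bigl(\bar{H}_\nu^k - H_{f_\nu}\bigr) + \bigl(H_{f_\nu} - \nabla^2 f(x_{k-1})\bigr),
\]
the first term is mean-zero (the estimator \eqnok{barHessian} is unbiased for $H_{f_\nu}$) while the second is deterministic and bounded by the Approximation Error estimate \eqnok{hessian_approx_error}. Applying $\|A+B\|^p \le 2^{p-1}(\|A\|^p+\|B\|^p)$ for $p=2,3$ isolates the two contributions; the bias term supplies the $\nu$-powers and the leading $(d+16)$-powers of the bias parts of \eqnok{hessian_var2} and \eqnok{hessian_var3}, so the real work is bounding the central moments of the stochastic term.

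For the stochastic term, write $\bar{H}_\nu^k - H_{f_\nu} = \tfrac{1}{b_k}\sum_{i=1}^{b_k} X_i$ with $X_i = H_\nu(x_{k-1},\dd^H_{k,i},u^H_{k,i}) - H_{f_\nu}$ i.i.d., symmetric and centered. Since $\E X_i^2 \preceq \E[H_\nu^2]$, the matrix variance satisfies $\bigl\|\sum_i \E X_i^2\bigr\| \le b_k\,\E\|H_\nu\|^2$, which \eqnok{Hessian_var_bnd} controls. I would then invoke an operator-norm matrix Bernstein (Rosenthal-type) moment inequality to obtain
\[
\E\bigl\|\bar{H}_\nu^k - H_{f_\nu}\bigr\|^2 \le \frac{c\,(1+2\log 2d)}{b_k}\,\E\|H_\nu\|^2 ,
\]
where the factor $(1+2\log 2d)$ is precisely the logarithmic-dimension factor such inequalities produce, and the hypothesis $b_k \ge 4(1+2\log 2d)$ is what lets the additive higher-order Bernstein term be absorbed into this variance term. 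Splitting $\E\|H_\nu\|^2$ from \eqnok{Hessian_var_bnd} into its $L^2$ part (which stays as $\tfrac{(1+2\log 2d)(d+16)^4 L^2}{b_k}$) and its $\nu^2$ part (which, using $b_k \ge 4(1+2\log 2d)$, sheds its $1/b_k$ and becomes a clean $\nu^2$ term), and combining with the squared bias, yields \eqnok{hessian_var2}.

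For the third central moment I would avoid a direct third-moment matrix inequality, which would carry an unwanted $(1+2\log 2d)^{3/2}$, and instead interpolate by H\"older:
\[
\E\bigl\|\bar{H}_\nu^k - H_{f_\nu}\bigr\|^3 \le \Bigl(\E\bigl\|\bar{H}_\nu^k - H_{f_\nu}\bigr\|^2\Bigr)^{1/2}\Bigl(\E\bigl\|\bar{H}_\nu^k - H_{f_\nu}\bigr\|^4\Bigr)^{1/2}.
\]
The second factor I would bound by passing to the Frobenius norm, $\|\cdot\|\le\|\cdot\|_F$, for which the fourth moment of an average of i.i.d. centered matrices is log-free, $\E\|\bar{H}_\nu^k - H_{f_\nu}\|_F^4 \le \tfrac{c'(\E\|H_\nu\|_F^2)^2}{b_k^2}$ (the $1/b_k^3$ remainder again dominated under $b_k \ge 4(1+2\log 2d)$), with $\E\|H_\nu\|_F^2 \le (\E\|H_\nu\|_F^4)^{1/2}$ from \eqnok{Hessian_var_bnd_fr}. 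Multiplying the $(1+2\log 2d)^{1/2}$ from the second moment against this log-free fourth moment produces the $\sqrt{1+2\log 2d}\,(d+16)^6 L^3/b_k^{3/2}$ leading term, while the $\nu$-pieces reassemble into the $(d+16)^{15/2}\nu^3$ bias term of \eqnok{hessian_var3}.

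The main obstacle is securing the logarithmic, rather than polynomial, dependence on $d$ in the operator-norm fluctuation: this forces the use of a genuine matrix concentration inequality, since the crude estimate $\|\cdot\|\le\|\cdot\|_F$ would cost a full factor of $d$ in the second moment. Everything else is routine bookkeeping --- choosing the constants so that the additive Bernstein term and the $1/b_k^3$ Frobenius remainder are absorbed under $b_k \ge 4(1+2\log 2d)$, and cleanly routing each $\nu$-dependent contribution of \eqnok{Hessian_var_bnd} and \eqnok{Hessian_var_bnd_fr} into the bias terms --- though this is where the explicit constants ($128/3$ and $3$ in \eqnok{hessian_var2}, $160$ and $21$ in \eqnok{hessian_var3}) get pinned down.
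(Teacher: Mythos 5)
Your proposal is correct and follows essentially the same route as the paper: the same fluctuation-plus-bias decomposition, the same operator-norm matrix moment (Tropp-type) inequality producing the $(1+2\log 2d)$ factor for the second moment, the same H\"older interpolation combined with a log-free Frobenius-norm Rosenthal bound for the third moment, and the same use of $b_k \ge 4(1+2\log 2d)$ to absorb the higher-order and $\nu$-dependent remainders. Your remark that the crude bound $\|\cdot\|\le\|\cdot\|_F$ would cost a full factor of $d$ correctly identifies why the genuine matrix concentration inequality is indispensable here.
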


\begin{proof}
First, note that by Theorem 1 in~\cite{tropp2016expected}, we have
\[
\E [\|\bar{H}_\nu^k - \nabla^2 f_\nu (x_{k-1})\|^2] \le \frac{2 C(d)}{b_k^2} \left(\left\|\sum_{i=1}^{b_k}\E[\Delta_{k,i}^2]\right\| +C(d)\E\left[\max_{i}\left\|\Delta_{k,i}\right\|^2\right]\right),
\]
where $\Delta_{k,i} = H_{\nu} (\pp_{k-1}, \dd^H_{k,i}, u^H_{k,i})-\nabla^2 f_\nu (x_{k-1})$ and $C(d)=4(1+2\log 2d)$. Now, noting \eqnok{Hessian_var_bnd}, we have
\[
\E[\|\Delta_{k,i}\|^2] \leq \frac{2(d+16)^4(L_H^2 (d+16)\nu^2+4L^2)}{3},
\]
which together with the above inequality and the fact that
\[
\left\|\sum_{i=1}^{b_k}\E[\Delta_{k,i}^2]\right\|
\le \sum_{i=1}^{b_k}\left\|\E[\Delta_{k,i}^2]\right\|
\le  \sum_{i=1}^{b_k}\E[\|\Delta_{k,i}\|^2],
\]
imply
\beq\label{hessian_var1}
\E [\|\bar{H}_\nu^k - \nabla^2 f_\nu (x_{k-1})\|^2] \le \frac{16(1+2\log 2d)(d+16)^4}{3b_k}\left[4 L^2+L_H^2 (d+16)\nu^2  \right].
\eeq
Combining this inequality with \eqnok{hessian_approx_error}, we obtain \eqnok{hessian_var2}. Moreover, by Holder's inequality we have
\begin{align*}
\E \left[\|\bar{H}_\nu^k - \nabla^2 f_\nu (x_{k-1})\|^3\right]
&\le \E \left[\|\bar{H}_\nu^k - \nabla^2 f_\nu (x_{k-1})\|\cdot \|\bar{H}_\nu^k - \nabla^2 f_\nu (x_{k-1})\|_F^2\right]\\
&\le \left(\E \left[\|\bar{H}_\nu^k - \nabla^2 f_\nu (x_{k-1})\|^2\right] \cdot \E \left[\|\bar{H}_\nu^k - \nabla^2 f_\nu (x_{k-1})\|_F^4\right]\right)^\frac 12.
\end{align*}
Now, by vector-valued Rosenthal's inequality (see, for example, Theorem 5.2 in~\cite{pinelis1994optimum}) and \eqnok{Hessian_var_bnd_fr}, we obtain
\[
\E \left[\|\bar{H}_\nu^k - \nabla^2 f_\nu (x_{k-1})\|_F^4\right] \le \frac{3 \E[\|\Delta_{k,i}\|_F^4]}{b_k^2}
\le
\frac{3(d+16)^8 \left(L_H^4 (d+16)^2 \nu^4+ 9L^4 \right)}{b_k^2},
\]
which together with the above inequality and \eqnok{hessian_var1} imply \eqnok{hessian_var3}.
\end{proof}
\vgap
We now proceed to provide the complexity results for Algorithm~\ref{alg_ZSCRN}. We first require two intermediate results.
\begin{lemma}\label{stationary_lemma1}
Let $\{x_k\}$ be computed by Algorithm~\ref{alg_ZSCRN}. Then under Assumptions~\ref{unbiased_assum} and \ref{smooth_assum}, we have
\begin{align}
&\sqrt{\E[\|x_k-x_{k-1}\|^2]} \nonumber\\
&\ge \max \left\{\sqrt{\frac{\left(\E[\|\nabla f(x_k)\|]-\delta_k^g-\delta_k^H \right)}{L_H+\alpha_k}}
, \frac{-2}{\alpha_k+2L_H} \left[\E[\lambda_{\min} \left(\nabla^2 f(x_k) \right)]+\sqrt{2(\alpha_k+L_H)\delta_k^H}\right] \right\}\label{stationary_condition},
\end{align}
where $\delta_k^g,\delta_k^H>0$ are chosen such that
\beq\label{def_delta_gH}
\E[\|\nabla f(x_{k-1})-\bar{G}_\nu^k\|^2]\le (\delta_k^g)^2, \qquad  \E[\|\nabla^2 f(x_{k-1})-\bar{H}_\nu^k\|^3] \le \left(2(L_H+\alpha_k)\delta_k^H\right)^\frac 32.
\eeq

\end{lemma}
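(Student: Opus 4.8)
The plan is to exploit the two optimality conditions of the cubic subproblem recorded in Lemma~\ref{opt_condition}, applied with $y=x_{k-1}$, $H=\bar H_\nu^k$, $g=\bar G_\nu^k$, $\alpha=\alpha_k$ and minimizer $\bar x = x_k$, and then convert the deterministic per-step inequalities into the stated bound in expectation using Jensen/Lyapunov moment inequalities together with the moment budget in \eqnok{def_delta_gH}. Throughout I write $h_k=\|x_k-x_{k-1}\|$, so that the claim is two independent lower bounds on $\sqrt{\E[h_k^2]}$, one governing the gradient size and one governing $\lambda_{\min}$; since each is a genuine lower bound, their maximum is a lower bound as well, which is exactly \eqnok{stationary_condition}.

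For the gradient term, I would start from the first-order optimality condition $\bar G_\nu^k+\bar H_\nu^k(x_k-x_{k-1})+\tfrac{\alpha_k}{2}h_k(x_k-x_{k-1})=0$ and insert it into the telescoping decomposition
\[
\nabla f(x_k)=\big[\nabla f(x_k)-\nabla f(x_{k-1})-\nabla^2 f(x_{k-1})(x_k-x_{k-1})\big]+\big[\nabla^2 f(x_{k-1})-\bar H_\nu^k\big](x_k-x_{k-1})+\big[\nabla f(x_{k-1})-\bar G_\nu^k\big]-\tfrac{\alpha_k}{2}h_k(x_k-x_{k-1}).
\]
Taking norms and using \eqnok{fhess_smooth0} on the first bracket gives $\|\nabla f(x_k)\|\le \tfrac{L_H+\alpha_k}{2}h_k^2+\|\nabla^2 f(x_{k-1})-\bar H_\nu^k\|\,h_k+\|\nabla f(x_{k-1})-\bar G_\nu^k\|$. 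The key maneuver is to apply Young's inequality to the cross term with weight $L_H+\alpha_k$, namely $Y h_k\le \tfrac{L_H+\alpha_k}{2}h_k^2+\tfrac{1}{2(L_H+\alpha_k)}Y^2$ with $Y=\|\nabla^2 f(x_{k-1})-\bar H_\nu^k\|$, so that after taking expectations the two $h_k^2$ pieces combine into $(L_H+\alpha_k)\E[h_k^2]$. I would then bound $\E[\|\nabla f(x_{k-1})-\bar G_\nu^k\|]\le\delta_k^g$ by Jensen and the first part of \eqnok{def_delta_gH}, and bound $\tfrac{1}{2(L_H+\alpha_k)}\E[Y^2]\le\delta_k^H$ via the Lyapunov inequality $\E[Y^2]\le(\E[Y^3])^{2/3}$ together with the second part of \eqnok{def_delta_gH}; rearranging yields $\E[h_k^2]\ge(\E[\|\nabla f(x_k)\|]-\delta_k^g-\delta_k^H)/(L_H+\alpha_k)$.

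For the eigenvalue term, I would use the second optimality condition $\bar H_\nu^k\succeq-\tfrac{\alpha_k}{2}h_k I_d$, hence $\lambda_{\min}(\bar H_\nu^k)\ge-\tfrac{\alpha_k}{2}h_k$, and Weyl's inequality applied to $\nabla^2 f(x_k)=\bar H_\nu^k+[\nabla^2 f(x_k)-\nabla^2 f(x_{k-1})]+[\nabla^2 f(x_{k-1})-\bar H_\nu^k]$, controlling the middle term by $L_H h_k$ through Assumption~\ref{a:fhess_smooth}. This produces $\lambda_{\min}(\nabla^2 f(x_k))\ge-\tfrac{\alpha_k+2L_H}{2}h_k-\|\nabla^2 f(x_{k-1})-\bar H_\nu^k\|$. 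Taking expectations, bounding $\E[\|\nabla^2 f(x_{k-1})-\bar H_\nu^k\|]\le(\E[Y^3])^{1/3}\le\sqrt{2(L_H+\alpha_k)\delta_k^H}$ again by Lyapunov and \eqnok{def_delta_gH}, and finally invoking $\E[h_k]\le\sqrt{\E[h_k^2]}$ (Jensen) gives the second lower bound. I expect the genuine work to be purely bookkeeping: the real care lies in choosing the Young's-inequality weight exactly as $L_H+\alpha_k$ and in tracking that the third-moment normalization in \eqnok{def_delta_gH} is precisely the one that makes the converted second and first moments collapse to $\delta_k^H$ and $\sqrt{2(L_H+\alpha_k)\delta_k^H}$ with no stray constants; everything else is a direct application of the optimality conditions, Weyl's inequality, and Jensen.
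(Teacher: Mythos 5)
Your proposal is correct and follows essentially the same route as the paper's proof: the same triangle-inequality decomposition built from the first-order optimality condition of the cubic subproblem, the same Young's inequality with weight $L_H+\alpha_k$ to absorb the cross term, and the same semidefinite/Weyl argument with $\bar H_\nu^k\succeq-\tfrac{\alpha_k}{2}\|x_k-x_{k-1}\|I_d$ for the eigenvalue bound. The only difference is that you make the Jensen/Lyapunov moment conversions from \eqnok{def_delta_gH} explicit, which the paper leaves implicit.
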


\vgap

\begin{proof}
By the equality condition in Lemma~\ref{opt_condition} and \eqnok{fhess_smooth0}, we have
\begin{align*}
\|\nabla f(x_k)\| &\le \|\nabla f(x_k)-\nabla f(x_{k-1})-\nabla^2 f(x_{k-1})(x_k-x_{k-1})\|+\|\nabla f(x_{k-1})-\bar{G}_\nu^k\|\\
&+\|\nabla^2 f(x_{k-1})-\bar{H}_\nu^k\|\cdot\|x_k-x_{k-1}\|+ \frac{\alpha_k}{2}\|x_k-x_{k-1}\|^2\\
&\le \frac{(L_H+\alpha_k)}{2}\|x_k-x_{k-1}\|^2+ \|\nabla f(x_{k-1})-\bar{G}_\nu^k\|+\|\nabla^2 f(x_{k-1})-\bar{H}_\nu^k\|\cdot \|x_k-x_{k-1}\|\\
&\le (L_H+\alpha_k)\|x_k-x_{k-1}\|^2+ \|\nabla f(x_{k-1})-\bar{G}_\nu^k\|+\frac{\|\nabla^2 f(x_{k-1})-\bar{H}_\nu^k\|^2}{2(L_H+\alpha_k)}.
\end{align*}
Taking expectation from both sides of the above inequality and noting that $\delta_k^g,\delta_k^H$ given in \eqnok{def_delta_gH} are well-defined by properly choosing $m_k$ and $b_k$ in Lemmas~\ref{bargrad_var} and \ref{barhessian_var}, we obtain
\beq\label{stationary_grad}
\frac{\left(\E[\|\nabla f(x_k)\|]-\delta_k^g-\delta_k^H \right)}{L_H+\alpha_k}  \le \E[\|x_k-x_{k-1}\|^2].
\eeq
Also, by smoothness assumption of the Hessian and the inequality relation in Lemma~\ref{opt_condition}
\begin{align*}
\nabla^2 f(x_k) &\succeq \nabla^2 f(x_{k-1})-L_H\|x_k-x_{k-1}\| I_d \succeq \nabla^2 f(x_{k-1})-\bar{H}_\nu^k+\bar{H}_\nu^k-L_H\|x_k-x_{k-1}\| I_d \\
&\succeq \nabla^2 f(x_{k-1})-\bar{H}_\nu^k-\frac{(\alpha_k+2L_H)\|x_k-x_{k-1}\|}{2} I_d,
\end{align*}
which implies that
\[
\frac{(\alpha_k+2L_H)\|x_k-x_{k-1}\|}{2} \ge \lambda_{\min} \left(f(x_{k-1})-\bar{H}_\nu^k \right) - \lambda_{\min} \left(\nabla^2 f(x_k) \right).
\]
Taking expectation from both sides of the above inequality and noting definition of $\delta_k^H$ in \eqnok{def_delta_gH}, we obtain
\[
\sqrt{\E[\|x_k-x_{k-1}\|^2]} \ge \E[\|x_k-x_{k-1}\|] \ge \frac{-2}{\alpha_k+2L_H} \left[\sqrt{2(\alpha_k+L_H)\delta_k^H}+ \E[\lambda_{\min} \left(\nabla^2 f(x_k) \right)]\right].
\]
Combining the above inequality with \eqnok{stationary_grad}, we obtain \eqnok{stationary_condition}.
\end{proof}

\vgap

\begin{lemma}\label{stationary_lemma2}
Let $\{x_k\}$ be computed by Algorithm~\ref{alg_ZSCRN} for a given iteration limit $N \ge 1$. Then under Assumptions~\ref{unbiased_assum} and \ref{smooth_assum}, we have
\beq
\E[\|x_R-x_{R-1}\|^3] \le \frac{36}{\sum_{k=1}^N \alpha_k} \left[f(x_0)-f^*+\sum_{k=1}^N \frac{4 (\delta_k^g)^\frac 32}{\sqrt{3\alpha_k}}+\sum_{k=1}^N \left(\frac{18 \sqrt[4]2}{\alpha_k}\right)^2\left((L_H+\alpha_k)\delta_k^H\right)^\frac 32\right]\label{stationary_upper},
\eeq
where $R$ is an integer random variable whose probability distribution $P_R(\cdot)$ is supported on $\{1,\ldots,N\}$ and given by
\beq\label{def_prob_hesina}
P_R(R=k)= \frac{\alpha_k}{\sum_{k=1}^N \alpha_k} \qquad k=1,\ldots,N,
\eeq
and $\delta_k^g,\delta_k^H>0$ are defined in \eqnok{def_delta_gH}.

\end{lemma}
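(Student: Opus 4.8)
The plan is to establish a per-iteration sufficient-decrease inequality for the true objective $f$ and then telescope it against the randomization distribution in \eqnok{def_prob_hesina}. Writing $s_k = x_k - x_{k-1}$ and $r_k = \|s_k\|$, I would first invoke the Hessian-smoothness bound \eqnok{fhess_smooth} with $y = x_k$, $x = x_{k-1}$ to get
\[
f(x_k) \le f(x_{k-1}) + \langle \nabla f(x_{k-1}), s_k\rangle + \tfrac12\langle \nabla^2 f(x_{k-1}) s_k, s_k\rangle + \tfrac{L_H}{6} r_k^3,
\]
and then replace the exact first- and second-order information by the estimates $\bar{G}_\nu^k, \bar{H}_\nu^k$, introducing the error terms $\Delta_k^g = \nabla f(x_{k-1}) - \bar{G}_\nu^k$ and $\Delta_k^H = \nabla^2 f(x_{k-1}) - \bar{H}_\nu^k$ whose moments are controlled by $\delta_k^g, \delta_k^H$ through \eqnok{def_delta_gH}.

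The heart of the argument is to use the optimality conditions of the cubic subproblem (Lemma~\ref{opt_condition}) to show that the model part of the decrease is genuinely negative. The equality condition gives $\langle \bar{G}_\nu^k, s_k\rangle = -\langle \bar{H}_\nu^k s_k, s_k\rangle - \tfrac{\alpha_k}{2} r_k^3$, while the semidefiniteness condition gives $\langle \bar{H}_\nu^k s_k, s_k\rangle \ge -\tfrac{\alpha_k}{2} r_k^3$; combining these yields $\langle \bar{G}_\nu^k, s_k\rangle + \tfrac12\langle \bar{H}_\nu^k s_k, s_k\rangle \le -\tfrac{\alpha_k}{4} r_k^3$. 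Substituting this back and bounding the two cross terms by Cauchy--Schwarz leaves
\[
f(x_k) - f(x_{k-1}) \le \Bigl(\tfrac{L_H}{6} - \tfrac{\alpha_k}{4}\Bigr) r_k^3 + \|\Delta_k^g\|\, r_k + \tfrac12 \|\Delta_k^H\|\, r_k^2.
\]

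Next I would absorb the two error terms into the cubic term by Young's inequality, treating $\|\Delta_k^g\| r_k$ with conjugate exponents $(3/2,3)$ and $\tfrac12\|\Delta_k^H\| r_k^2$ with exponents $(3,3/2)$. Choosing the scaling parameters so that the fractions of $r_k^3$ split off are small enough that, together with $\tfrac{L_H}{6}$, they leave a coefficient at most $-\tfrac{\alpha_k}{36}$ (this is where the proof needs $\alpha_k$ to be at least of order $L_H$, as is standard for cubic regularization), the residual error contributions become proportional to $\|\Delta_k^g\|^{3/2}$ and $\|\Delta_k^H\|^3$. Taking expectations, applying the moment inequality $\E[\|\Delta_k^g\|^{3/2}] \le (\E[\|\Delta_k^g\|^2])^{3/4} \le (\delta_k^g)^{3/2}$ and the third-moment bound in \eqnok{def_delta_gH} for $\Delta_k^H$ (which contributes the factor $2^{3/2}$, hence the $\sqrt[4]2$), produces the per-iteration inequality $\tfrac{\alpha_k}{36}\E[r_k^3] \le \E[f(x_{k-1}) - f(x_k)] + \tfrac{4(\delta_k^g)^{3/2}}{\sqrt{3\alpha_k}} + \bigl(\tfrac{18\sqrt[4]2}{\alpha_k}\bigr)^2 ((L_H+\alpha_k)\delta_k^H)^{3/2}$.

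Finally I would sum over $k = 1,\dots,N$; the function-value differences telescope and are bounded by $f(x_0) - f^*$ using $f(x_N) \ge f^*$. Multiplying through by $36$ and dividing by $\sum_{k=1}^N \alpha_k$, the left-hand side becomes exactly $\E[\|x_R - x_{R-1}\|^3]$ for $R$ drawn from \eqnok{def_prob_hesina}, giving \eqnok{stationary_upper}. The main obstacle I anticipate is the constant bookkeeping in the Young step: one must pick the two scaling parameters simultaneously so that (i) the leftover $r_k^3$ coefficient is the clean negative multiple $\tfrac{\alpha_k}{36}$ and (ii) the error coefficients land exactly on $\tfrac{4}{\sqrt{3\alpha_k}}$ and $\bigl(\tfrac{18\sqrt[4]2}{\alpha_k}\bigr)^2$; reconciling these with the $(L_H+\alpha_k)$ grouping and the $2^{3/2}$ from the Hessian moment bound is the delicate part, and it is what forces the implicit requirement that $\alpha_k$ dominate $L_H$.
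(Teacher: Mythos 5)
Your proposal is correct and follows essentially the same route as the paper: the cubic Taylor bound from Assumption~\ref{a:fhess_smooth}, the two optimality conditions of Lemma~\ref{opt_condition} to make the model decrease $-\tfrac{\alpha_k}{4}\|x_k-x_{k-1}\|^3$ (the paper phrases this as $\tilde f^k(x_k)\le -\tfrac{\alpha_k}{12}\|x_k-x_{k-1}\|^3$ plus the cancellation under $\alpha_k\ge L_H$, which is algebraically identical to your combined bound), Young's inequality with exponents $(3/2,3)$ and $(3,3/2)$ to absorb the error cross-terms, the moment bounds \eqnok{def_delta_gH}, and the telescoping sum against the distribution \eqnok{def_prob_hesina}. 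Your accounting of the $2^{3/2}$ factor producing the $\sqrt[4]{2}$ and of the implicit requirement $\alpha_k\gtrsim L_H$ matches the paper's proof.
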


\begin{proof}
First, note that by \eqnok{fhess_smooth}, \eqnok{def_xk_cubic}, and the fact that $\alpha_k \ge L_H$, we have
\[
f(x_k) \le f(x_{k-1}) + \tilde f^k(x_k)
+\|\nabla f(x_{k-1})-\bar{G}_\nu^k\|\cdot\|x_k-x_{k-1}\|
+\frac 12 \|\nabla^2 f(x_{k-1})-\bar{H}_\nu^k\|\cdot\|x_k-x_{k-1}\|^2.
\]
Moreover, by Lemma~\ref{opt_condition}, we have
\[
\tilde f^k(x_k) = -\frac 12 \langle \bar{H}_\nu^k (x_k-x_{k-1}), (x_k-x_{k-1}) \rangle-\frac{\alpha_k}{3}\|x_k-x_{k-1}\|^3 \le -\frac{\alpha_k}{12}\|x_k-x_{k-1}\|^3.
\]
Combining the above two relations, we obtain
\begin{align*}
\frac{\alpha_k}{12}\|x_k-x_{k-1}\|^3 &\le f(x_{k-1}) -f(x_k) +\|\nabla f(x_{k-1})-\bar{G}_\nu^k\|\cdot\|x_k-x_{k-1}\|\\
&+\frac 12 \|\nabla^2 f(x_{k-1})-\bar{H}_\nu^k\|\cdot\|x_k-x_{k-1}\|^2 \\
&\le f(x_{k-1}) -f(x_k) +\frac{4}{\sqrt{3\alpha_k}}\|\nabla f(x_{k-1})-\bar{G}_\nu^k\|^\frac{3}{2}+ \left(\frac{9 \sqrt2}{\alpha_k}\right)^2 \|\nabla^2 f(x_{k-1})-\bar{H}_\nu^k\|^3 \\
&+ \frac{\alpha_k}{18} \|x_k-x_{k-1}\|^3,\label{hessian_proof1}
\end{align*}
where the last inequality follows from the Young's inequality. Taking expectation from both sides, re-arranging the terms, and noting \eqnok{def_delta_gH}, we obtain
\[
\frac{\alpha_k}{36}\E[\|x_k-x_{k-1}\|^3] \le f(x_0)-f^*+\frac{4 (\delta_k^g)^\frac 32}{\sqrt{3\alpha_k}}+\left(\frac{18 \sqrt[4]2}{\alpha_k}\right)^2\left((L_H+\alpha_k)\delta_k^H\right)^\frac 32.
\]
Summing up the above inequalities, dividing both sides by $\sum_{k=1}^N \alpha_k$, and noting \eqnok{def_prob_hesina}, we obtain \eqnok{stationary_upper}.
\end{proof}

\vgap

\begin{theorem}\label{thm:main_cubic}
Let $\{x_k\}$ be computed by Algorithm~\ref{alg_ZSCRN} for a given iteration limit $N \ge 1$. Moreover, assume that the parameters are set to
\begin{align}
\alpha_k &=L_H, \qquad \nu \le \frac 12 \min\left\{\sqrt{\frac{L_H \epsilon}{36(d+16)^5}},\frac{\epsilon}{L  (d+3)\frac 32} \right\}, \qquad N=\frac{12 \sqrt{L_H}(f(x_0)-f^*)}{\epsilon^\frac 32},\nonumber\\
b_k &=\frac{2L^2}{L_H}\left(4(d+16)^2\right)^4 \left(\frac{\sqrt[3]{1+2\log2d}}{\epsilon}\right), \qquad m_k = \frac{26(d+5)(B^2+\sigma^2)}{\epsilon^2}.\label{def_param_hessian}
\end{align}
Then under Assumptions~\ref{unbiased_assum} and \ref{smooth_assum}, we have
\beq\label{local_min}
5\sqrt{\epsilon} \ge \max \left\{\sqrt{\E[\|\nabla f(x_R)\|]}
, \frac{-5}{8\sqrt{L_H}}\E[\lambda_{\min} \left(\nabla^2 f(x_R) \right)] \right\},
\eeq
where $R$ is uniformly distributed over $\{1,\ldots,N\}$. As a consequence, to obtain an $\epsilon$ second-order stationary point of the problem, the total number of samples required to compute the gradient and Hessian are, respectively, bounded by
\[
{\cal O} \left(\frac{d}{\epsilon^\frac{7}{2}}\right), \qquad \qquad \tilde {\cal O} \left(\frac{d^4}{\epsilon^\frac{5}{2}}\right).
\]
\end{theorem}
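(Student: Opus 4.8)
The plan is to couple the two structural lemmas already in hand: Lemma~\ref{stationary_lemma1}, which lower-bounds $\sqrt{\E[\|x_k-x_{k-1}\|^2]}$ by the second-order stationarity measure up to the estimation errors $\delta_k^g,\delta_k^H$, and Lemma~\ref{stationary_lemma2}, which upper-bounds the weighted third moment $\E[\|x_R-x_{R-1}\|^3]$ by $f(x_0)-f^*$ plus accumulated error terms. The whole argument then reduces to substituting the parameter values in \eqnok{def_param_hessian} so that every residual term lands at the right power of $\epsilon$. As a first observation, with $\alpha_k=L_H$ constant the sampling law \eqnok{def_prob_hesina} degenerates to the uniform distribution over $\{1,\dots,N\}$ (matching the statement) and $\sum_{k=1}^N\alpha_k=NL_H$; moreover $L_H+\alpha_k=2L_H$, $(\alpha_k+2L_H)/2=3L_H/2$ and $2(\alpha_k+L_H)=4L_H$, which fixes all the constants appearing below.

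Next I would pin down admissible $\delta^g,\delta^H$ from the variance lemmas. Substituting $m_k$ from \eqnok{def_param_hessian} together with the second branch of the $\nu$-constraint into the gradient bound \eqnok{var2} gives $(\delta^g)^2={\cal O}(\epsilon^2)$, i.e.\ $\delta^g={\cal O}(\epsilon)$. Substituting $b_k$ and the first branch of the $\nu$-constraint into the third-moment Hessian bound \eqnok{hessian_var3} gives $\E[\|\nabla^2 f(x_{k-1})-\bar H_\nu^k\|^3]={\cal O}((L_H\epsilon)^{3/2})=(4L_H\delta^H)^{3/2}$ with $\delta^H={\cal O}(\epsilon)$; here the two branches of the $\min$ defining $\nu$ play distinct roles, the first controlling the Hessian approximation bias (the $\nu^3(d+16)^{15/2}$ term) and the second the gradient bias (the $\nu^2L^2(d+3)^3$ term).

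With $\delta^g,\delta^H={\cal O}(\epsilon)$ fixed I would invoke Lemma~\ref{stationary_lemma2} and plug in $N$: the $f(x_0)-f^*$ term is exactly cancelled by the choice of $N$, and the gradient- and Hessian-error sums are each ${\cal O}((\epsilon/L_H)^{3/2})$, so $\E[\|x_R-x_{R-1}\|^3]={\cal O}((\epsilon/L_H)^{3/2})$. By the power-mean (Lyapunov) inequality this upgrades to $\E[\|x_R-x_{R-1}\|^2]\le(\E[\|x_R-x_{R-1}\|^3])^{2/3}={\cal O}(\epsilon/L_H)$ and $\sqrt{\E[\|x_R-x_{R-1}\|^2]}={\cal O}(\sqrt{\epsilon/L_H})$. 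Finally I would average the two per-index inequalities of Lemma~\ref{stationary_lemma1} over the uniform index $R$. The gradient inequality averages linearly, giving $\E[\|\nabla f(x_R)\|]\le 2L_H\,\E[\|x_R-x_{R-1}\|^2]+\delta^g+\delta^H={\cal O}(\epsilon)$; the eigenvalue inequality needs one further use of Jensen, $\tfrac1N\sum_k\sqrt{a_k}\le\sqrt{\tfrac1N\sum_k a_k}$ with $a_k=\E[\|x_k-x_{k-1}\|^2]$, giving $-\E[\lambda_{\min}(\nabla^2 f(x_R))]\le\tfrac{3L_H}{2}\sqrt{\E[\|x_R-x_{R-1}\|^2]}+\sqrt{4L_H\delta^H}={\cal O}(\sqrt{L_H\epsilon})$. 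Taking a square root in the first bound and dividing the second by $\sqrt{L_H}$, and checking that the accumulated constants are small enough, yields \eqnok{local_min} with the stated factor $5$; the complexity claims then follow from $\sum_k m_k=Nm_1={\cal O}(d/\epsilon^{7/2})$ and $\sum_k b_k=Nb_1=\tilde{\cal O}(d^4/\epsilon^{5/2})$.

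The main obstacle I anticipate is the bookkeeping of the coupled moment orders: the target guarantee is on second-order quantities ($\|\nabla f\|$ and $\lambda_{\min}$), whereas the descent mechanism in Lemma~\ref{stationary_lemma2} controls only the \emph{third} moment of the step length, so the crux is to select $N,m_k,b_k,\nu$ simultaneously so that, after passing from the third to the second moment by power-mean and then averaging over $R$ (which for the curvature term costs an extra Jensen step), each residual lands at exactly ${\cal O}(\epsilon)$ for the gradient and ${\cal O}(\sqrt{L_H\epsilon})$ for the curvature, with constants tight enough to reach the factor $5$. The attendant constant-chasing --- confirming that the two declared branches of the $\nu$-constraint are the binding ones and that the $d$-powers in $b_k$ match the advertised $\tilde{\cal O}(d^4/\epsilon^{5/2})$ Hessian budget --- is where the care lies.
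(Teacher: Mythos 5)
Your proposal follows essentially the same route as the paper's proof: fix $\delta_k^g,\delta_k^H={\cal O}(\epsilon)$ via Lemmas~\ref{bargrad_var} and~\ref{barhessian_var} under the stated $m_k,b_k,\nu$, bound $\E[\|x_R-x_{R-1}\|^3]$ via Lemma~\ref{stationary_lemma2} with the prescribed $N$, and then pass back through Lemma~\ref{stationary_lemma1} to the stationarity measure. The paper's own write-up is terser and leaves the third-to-second moment conversion and the averaging over $R$ implicit (and does not display the exact constants $\delta_k^g=2\epsilon/5$, $\delta_k^H=\epsilon/138$ derivation either), so your explicit power-mean and Jensen steps are exactly the bookkeeping it elides; the argument is correct.
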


\vgap

\begin{proof}
First, note that by \eqnok{def_param_hessian}, Lemmas~\ref{bargrad_var}, and \ref{barhessian_var}, we can ensure that \eqnok{def_delta_gH} is satisfied by $\delta_k^g=2\epsilon/5$ and $\delta_k^H=\epsilon/138$. Moreover, by Lemma~\ref{stationary_lemma2}, we have
\[
\E[\|x_R-x_{R-1}\|^3] \le \frac{1}{L_H^{\frac 32}} \left[\frac{12\sqrt{L_H}(f(x_0)-f^*)}{N}+7\epsilon^\frac 32\right].
\]
Hence, by choosing $N$ according \eqnok{def_param_hessian}, and noting Lemma~\ref{stationary_lemma2}, we obtain \eqnok{local_min}.
Therefore, $x_R$ is an $4\epsilon$ second-order stationary point of the problem. Finally, note that the total number of required samples to obtain such a solution is bounded by
\[
\sum_{k=1}^N m_k = {\cal O} \left(\frac{d}{\epsilon^\frac{7}{2}}\right), \qquad \sum_{k=1}^N b_k = \tilde {\cal O} \left(\frac{d^4}{\epsilon^\frac{5}{2}}\right)
\]

\end{proof}

\begin{remark}
Note that~\cite{tripuraneni2017stochastic} provide a high-probability complexity result for stochastic Newton method with inexact gradient and Hessian information of the order $\tilde {\cal O} \left(\epsilon^{-3.5}\right)$. This dependence on $\epsilon$ is better compared to algorithms that only use stochastic first-order information to avoid saddle points. They mainly focus on sub-sampled Newton method common in the finite-sum setting and require their stochastic Hessians to be almost-surely bounded. However, this assumption does not imply the zeroth-order Hessian estimators in Equation~\ref{Hessian_estimator} are bounded almost-surely, which complicates the analysis.
\end{remark}

\begin{remark}
Note that by Theorem~\ref{thm:main_cubic}, the total number of calls to the zeroth-order oracle is of the order $\tilde{\cal O} \left(\frac{d}{\epsilon^{3.5}}\right)$ when $\epsilon \approx d^{-3}$. This shows the advantage of using the (estimated) second-order information for converging to high-accuracy second-order stationary points. The linear dependence on $d$ is the price to pay for having access to only zeroth-order information, similar to the previous sections. Furthermore, depending on the quality of the solution required, a wide variety of intermediate complexity results are possible, thereby providing practical flexibility.
\end{remark}

\section{Discussion}\label{sec:theend}
In this work, we propose and analyze zeroth-order stochastic approximation algorithms for convex and nonconvex problems motivated by modern machine learning challenges. Specifically, we provide zeroth-order algorithms to deal with constraints, dimensionality and saddle-points in nonconvex stochastic optimization problems. While our focus was on general stochastic optimization problems, one could naturally obtain better rates in the case of finite-sum optimization problems with various variance reduction techniques. Several concrete extensions are possible for future work. The performance of conditional gradient algorithm in the high-dimensional constrained optimization setting is not well-explored; the interaction between the geometry of the constraint set, sparsity structure and zeroth-order information is extremely interesting to explore. Obtaining regret bounds for the non-convex problems considered in this work is more challenging. Furthermore, lower bounds can be explored for the cases considered in this paper when $f$ is nonconvex. Finally, obtaining second-order stationarity results in the constrained setting is more challenging. We plan to extend our results for these setting in the future.
\bibliographystyle{alpha}
\bibliography{optzero}

\end{document}